\newtheorem{sat}{Theorem}[section]		\newtheorem{lem}[sat]{Lemma}
\newtheorem{kor}[sat]{Corollary}			\newtheorem{prop}[sat]{Proposition}
\newtheorem*{defi*}{Definition}			\newtheorem*{bei*}{Example}
\newtheorem*{sat*}{Theorem}				\newtheorem*{kor*}{Corollary}
\newtheorem*{rmk*}{Remark}					
\newtheorem{fact}{Fact}
\let\ssection=\section
\renewcommand{\section}{\setcounter{equation}{0}\ssection}
\newtheorem*{namedtheorem}{\theoremname}
\newcommand{\theoremname}{testing}
\newenvironment{named}[1]{\renewcommand{\theoremname}{#1}\begin{namedtheorem}}{\end{namedtheorem}}
\theoremstyle{remark}
\newtheorem*{bem}{Remark}
\newtheorem*{namedtheoremr}{\theoremnamer}
\newcommand{\theoremnamer}{testing}
			\newcommand{\BH}{\mathbb H}
\newcommand{\BR}{\mathbb R}			
\newcommand{\BN}{\mathbb N}			
\newcommand{\BS}{\mathbb S}			\newcommand{\BZ}{\mathbb Z}
\newcommand{\CA}{\mathcal A}		
\newcommand{\CC}{\mathcal C}		
		\newcommand{\CF}{\mathcal F}
\newcommand{\CG}{\mathcal G}		
\newcommand{\CI}{\mathcal I}		
		\newcommand{\CL}{\mathcal L}
\newcommand{\CM}{\mathcal M}		
\newcommand{\CO}{\mathcal O}		\newcommand{\CP}{\mathcal P}
\newcommand{\CS}{\mathcal S}		
		\newcommand{\CV}{\mathcal V}
\newcommand{\CW}{\mathcal W}		
		\newcommand{\CZ}{\mathcal Z}
\newcommand{\D}{\partial}
\DeclareMathOperator{\SL}{SL}		
\DeclareMathOperator{\Id}{Id}		
\DeclareMathOperator{\vol}{vol}		
\DeclareMathOperator{\Map}{Map}
\newcommand{\comment}[1]{}
\DeclareMathOperator{\Thu}{Thu}
\DeclareMathOperator{\fills}{\prec_{\mathrm\tiny fills}}
\begin{document}

\title[]{Counting curves in hyperbolic surfaces}
\author{Viveka Erlandsson}
\address{Aalto Science Institute, Aalto University}
\email{viveca.erlandsson@aalto.fi}
\author{Juan Souto}
\address{IRMAR, Universit\'e de Rennes 1}
\email{juan.souto@univ-rennes1.fr}
\thanks{\today}

\begin{abstract}
Let $\Sigma$ be a hyperbolic surface. We study the set of curves on $\Sigma$ of a given type, i.e. in the mapping class group orbit of some fixed but otherwise arbitrary $\gamma_0$. For example, in the particular case that $\Sigma$ is a once-punctured torus, we prove that the cardinality of the set of curves of type $\gamma_0$ and of at most length $L$ is asymptotic to $L^2$ times a constant.
\end{abstract}

\maketitle

\section{}

Throughout this paper we let $\Sigma$ be a complete hyperbolic surface of finite area, with genus $g$ and $r$ punctures, and distinct from a thrice punctured sphere. By an {\em immersed multicurve}, or simply {\em multicurve}, in $\Sigma$ we will mean an immersed compact 1-dimensional submanifold of $\Sigma$ each of whose components represents (the conjugacy class of) a primitive non-peripheral element in $\pi_1(\Sigma)$. Two multicurves $\gamma,\gamma'$ are of the same {\em type} if they belong to the same mapping class orbit, meaning that there is a diffeomorphism $\phi$ of $\Sigma$ such that $\gamma$ and $\phi(\gamma')$ are isotopic as immersed submanifolds. In general, isotopic geodesics are considered to be equivalent. For instance, every multicurve $\gamma$ is isotopic to a geodesic multicurve and the length $\ell_\Sigma(\gamma)$ is the length of the latter.

In this paper we study the set $\CS_{\gamma_0}=\Map(\Sigma)\cdot\gamma_0$ of (isotopy classes of) multicurves of some given type $\gamma_0$. More precisely, we are interested in the behavior, when $L$ tends to infinity, of the number $\vert\{\gamma\in\CS_{\gamma_0}\vert\ell_\Sigma(\gamma)\le L\}\vert$ of multicurves in $\Sigma$ of type $\gamma_0$ and of at most length $L$. Since this number grows coarsely like a polynomial of degree $6g-6+2r$ (see \cite{Rees} for the case that $\gamma_0$ is simple and \cite{Sapir1,Sapir2} or Corollary \ref{poly-growth} below for the general case), the perhaps most grappling question is whether the limit 
\begin{equation}\label{eq1}
\lim_{L\to\infty}\frac{\vert\{\gamma\in\CS_{\gamma_0}\vert\ell_\Sigma(\gamma)\le L\}\vert}{L^{6g-6+2r}}
\end{equation}
exists. Our main result is that it does if $\Sigma$ is a once-punctured torus:

\begin{sat}\label{sat2}
Let $\Sigma$ be a complete hyperbolic surface of finite volume homeomorphic to a once punctured torus and let $\gamma_0\subset\Sigma$ be a multicurve. The limit \eqref{eq1} exists and moreover we have
$$\lim_{L\to\infty}\frac{\vert\{\gamma\in\CS_{\gamma_0}\vert\ell_\Sigma(\gamma)\le L\}\vert}{L^2}=C_{\gamma_0}\cdot\mu_{\Thu}(\{\lambda\in\CM\CL(\Sigma)\vert\ell_\Sigma(\lambda)\le 1\})$$
where $\mu_{\Thu}$ is the Thurston measure on the space of measured laminations $\CM\CL(\Sigma)$ and $C_{\gamma_0}>0$ depends only on $\gamma_0$.
\end{sat}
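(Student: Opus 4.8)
The plan is to adapt Mirzakhani's approach to counting simple closed geodesics, using the fact that on a once-punctured torus the relevant space of measured laminations $\mathcal{ML}(\Sigma)$ is two-dimensional and carries a well-understood mapping class group action with finite covolume. The starting point is to encode the counting function via the Thurston measure. For each $L$, consider the counting measure on $\mathcal{ML}(\Sigma)$ given by $\nu_L = \frac{1}{L^2}\sum_{\gamma \in \mathcal{S}_{\gamma_0}} \delta_{\gamma/L}$, where $\gamma/L$ denotes the scaled measured lamination. The key claim is that $\nu_L$ converges weakly, as $L \to \infty$, to $C_{\gamma_0} \cdot \mu_{\mathrm{Thu}}$ for an appropriate constant $C_{\gamma_0} > 0$; evaluating both sides on the set $\{\lambda : \ell_\Sigma(\lambda) \le 1\}$ (which has $\mu_{\mathrm{Thu}}$-negligible boundary, by homogeneity of the Thurston measure and continuity of length) then yields the theorem.

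To establish this weak convergence, I would proceed as follows. First, fix the geodesic current picture: each multicurve in $\mathcal{S}_{\gamma_0}$ is a geodesic current, and length $\ell_\Sigma$ extends to a continuous homogeneous function on the space of currents, agreeing with the hyperbolic length on multicurves and with the $i(\lambda, \cdot)$-type length on laminations. Second, use that $\mathrm{Map}(\Sigma)$ acts on $\mathcal{ML}(\Sigma) \cong \mathbb{R}^2 / \pm$ essentially as (a finite-index subgroup related to) $\mathrm{SL}_2(\mathbb{Z})$ acting on $\mathbb{R}^2$, for which the orbit-counting/equidistribution statement — that lattice points in a homogeneously expanding region equidistribute with respect to Lebesgue measure — is classical. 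Concretely, $\gamma_0$ corresponds to a single $\mathrm{Map}(\Sigma)$-orbit in the integral (or rational) lamination lattice, and the $L$-scaled orbit points equidistribute toward a multiple of $\mu_{\mathrm{Thu}}$; the multiple is precisely the covolume-type constant $C_{\gamma_0}$, which depends on $\gamma_0$ only through which orbit it is (e.g. through $i(\gamma_0, \gamma_0)$-data or the index of its stabilizer). Third, transfer this from the lamination/current count to the actual length count: one must check that ordering by hyperbolic length $\ell_\Sigma$ is the same as ordering by the homogeneous length functional on currents, so that $\{\gamma \in \mathcal{S}_{\gamma_0} : \ell_\Sigma(\gamma) \le L\}$ corresponds under scaling by $1/L$ to orbit points landing in the fixed region $\{\ell_\Sigma \le 1\}$.

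The main obstacle I anticipate is the passage from the purely combinatorial/arithmetic $\mathrm{SL}_2(\mathbb{Z})$-orbit count to a statement about \emph{hyperbolic} lengths on a \emph{variable} hyperbolic structure $\Sigma$. The equidistribution of scaled $\mathrm{SL}_2(\mathbb{Z})$-orbits is soft, but the region $\{\lambda \in \mathcal{ML}(\Sigma) : \ell_\Sigma(\lambda) \le 1\}$ is not a Euclidean ball — it is some convex-ish body whose shape depends on the marked hyperbolic structure, and one must know (a) that its boundary is $\mu_{\mathrm{Thu}}$-null, which follows from homogeneity plus the fact that $\ell_\Sigma$ is a norm-like function, and (b) crucially, that $\ell_\Sigma$ restricted to the discrete set $\mathcal{S}_{\gamma_0}$ genuinely controls the count, i.e. that there is no discrepancy between the hyperbolic length of a multicurve and the value of the extended homogeneous length functional evaluated on it as a current. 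For simple $\gamma_0$ this is Mirzakhani's theorem; the new content here is handling non-simple (self-intersecting) $\gamma_0$, where the geodesic representative of $\phi(\gamma_0)$ has length depending on $\phi$ in a way that still scales correctly along a sequence of mapping classes pushing the lamination to infinity — this is where I expect the bulk of the technical work, likely requiring a continuity/compactness argument for the length function on the space of currents near the boundary $\mathcal{ML}(\Sigma)$.
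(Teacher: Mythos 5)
Your overall framework (the rescaled counting measures $\nu^L_{\gamma_0}$, convergence to a multiple of $\mu_{\Thu}$, then evaluation on the unit length ball, which is justified because its boundary is $\mu_{\Thu}$-null) matches the paper, and the final transfer step you worry about is handled cleanly there via the Liouville current and the continuity of the intersection form on compactly supported currents. But there is a genuine gap at the heart of your argument: you assert that $\gamma_0$ ``corresponds to a single $\Map(\Sigma)$-orbit in the integral lamination lattice,'' and that classical $\SL_2(\BZ)$-lattice-point equidistribution then applies. For non-simple $\gamma_0$ this is false: a self-intersecting multicurve is a geodesic current but \emph{not} a measured lamination, so $\CS_{\gamma_0}$ does not sit inside $\CM\CL_\BZ(\Sigma)\cong\BZ^2/\pm 1$ at all, and there is no single lattice orbit to equidistribute. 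The paper's entire technical apparatus exists to bridge this: one constructs a resolution map $\pi_{\epsilon,\gamma_0}$ from a generic subset of $\CS_{\gamma_0}$ to $\CM\CL_\BZ(\Sigma)$ (resolving almost-flat self-intersections and loops around the cusp, which requires the radalla machinery and the uniform bound on fibers of Proposition \ref{prop-cars}), and the subsequential convergence to a multiple of $\mu_{\Thu}$ then comes from absolute continuity plus Masur's ergodicity theorem — not from soft lattice counting.

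The second, subtler obstruction is that even after pushing the problem onto $\CM\CL_\BZ\cong\BN^2$, the relevant weighted counting function $\gamma\mapsto\vert\pi_{\epsilon,\gamma_0}^{-1}(\gamma)\vert$ is \emph{not} mapping class group invariant (the domain $\CS_{\gamma_0}^\epsilon$ of the resolution map is not $\Map(\Sigma)$-invariant); it is only monotone under the carrying semigroup $\Gamma_\tau\cong\SL_2\BN$. So the superlevel sets of the fiber-count are merely $\SL_2\BN$-invariant subsets of $\BN^2$, which are unions of infinitely many $\SL_2\BZ$-orbit pieces, and the classical single-orbit equidistribution you invoke does not cover them. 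The paper needs Theorem \ref{thm-density} — that every $\SL_2\BN$-invariant subset of $\BN^2$ has a density — whose proof uses Maucourant's equidistribution theorem for the semigroup together with a uniform counting bound on each orbit to justify summing densities over infinitely many orbits (a step that fails for general countable unions). Without the resolution map with bounded fibers and without the semigroup density theorem, your proposal does not close.
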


In the case of simple multicurves Theorem \ref{sat2} is due to McShane-Rivin \cite{Greg-Rivin}. Also, for simple multicurves, Mirzakhani \cite{Maryam} proved that the limit \eqref{eq1} exists for all $g$ and $r$. Building on the work of Mirzakhani, Rivin \cite{Rivin} established the existence of the limit \eqref{eq1} for multicurves with a single self-intersection. 

\begin{bem}
Recently, and independently of our work, Mirzakhani \cite{Maryam-new} has established the existence of \eqref{eq1} in complete generality. Her argument and ours are different in nature and in some sense complementary. See the remarks following the statement of Corollary \ref{kor-ratio} in this introduction for more on the relation between Mirzakhani's result and ours.
\end{bem}

Still in the setting of simple multicurves, the case of the torus is much more treatable than the general one because any two simple multicurves in the torus are of the same type as long as they have the same number of components. This means that, in the case of the torus, counting simple multicurves of some fixed type basically reduces to counting all simple multicurves, a much simpler problem. In fact, if $\Sigma$ is an arbitrary hyperbolic surface of finite area with genus $g$ and $r$ punctures, if $\CS$ is the set of all multicurves in $\Sigma$, and if we set 
$$c_\Sigma=\mu_{\Thu}(\{\lambda\in\CM\CL(\Sigma)\vert\ell_\Sigma(\lambda)\le 1\})$$
then
\begin{align*}
\lim_{L\to\infty}\frac{\vert\{\gamma\in\CS\vert\ell_\Sigma(\gamma)\le L,\ \iota(\gamma,\gamma)=0\}\vert}{L^{6g-6+2r}}
&=c_\Sigma\\
\lim_{L\to\infty}\frac{\vert\{\gamma\in\CS\vert\ell_\Sigma(\gamma)\le L,\ \iota(\gamma,\gamma)=1\}\vert}{L^{6g-6+2r}}
&=3(2g-2+r)\cdot c_\Sigma\\
\lim_{L\to\infty}\frac{\vert\{\gamma\in\CS\vert\ell_\Sigma(\gamma)\le L,\ \iota(\gamma,\gamma)=2\}\vert}{L^{6g-6+2r}}
&=\frac 92\big((2g+r)(2g+r-3)+2\big)\cdot c_\Sigma.
\end{align*}
See Proposition 3.1 in \cite{Maryam} for the first limit and Corollary \ref{kor-small-k} for the other two.
\medskip

As was the case in \cite{Maryam}, the basic strategy of this paper is to translate the problem of the existence of the limit \eqref{eq1} to the existence of a limit of a suitable family of measures. More concretely, we will consider, for $\gamma_0$ and $\CS_{\gamma_0}$ as above and for each $L$, the measure
$$\nu_{\gamma_0}^L=\frac 1{L^{6g-6+2r}}\sum_{\gamma\in\CS_{\gamma_0}}\delta_{\frac 1L\gamma}$$
on the space $\CC(\Sigma)$ of geodesic currents on $\Sigma$. Here, $\delta_{\frac 1L\gamma}$ is the Dirac measure centred in the current $\frac 1L\gamma$. We will study these measures when $L\to\infty$ and prove, for instance, that they can only accumulate to multiples of the Thurston measure $\mu_{\Thu}$ on the space $\CM\CL(\Sigma)$ of measure laminations:

\begin{named}{Proposition \ref{prop-sublimit}}
Any sequence $(L_n)_n$ of positive numbers with $L_n\to\infty$ has a subsequence $(L_{n_i})_i$ such that the measures $(\nu_{\gamma_0}^{L_{n_i}})_i$ converge in the weak-*-topology to the measure $\alpha\cdot\mu_{\Thu}$ on $\CM\CL(\Sigma)\subset\CC(\Sigma)$ for some $\alpha>0$.
\end{named}

The point of considering limits of these measures is that actual existence of the limit of the measures $\nu_{\gamma_0}^L$ implies (is equivalent to) the existence of the limit \eqref{eq1}. Before making this statement precise, recall that there is a (filling) current associated to the hyperbolic metric, the Liouville current $\lambda_\Sigma\in\CC(\Sigma)$, satisfying
\begin{equation}\label{eq-liouville}
\iota(\lambda_\Sigma,\gamma)=\ell_\Sigma(\gamma)
\end{equation}
for every curve $\gamma$. Here $\iota(\cdot,\cdot)$ is the intersection form on the space of currents and a current $\lambda$ is filling if every geodesic in $\Sigma$ is transversally intersected by some geodesic in the support of $\lambda$.

In light of \eqref{eq-liouville}, we can consider the limit \eqref{eq1} as a special case of the limit
\begin{equation}\label{eq101}
\lim_{L\to\infty}\frac{\vert\{\gamma\in\CS_{\gamma_0}\vert\iota(\lambda_0,\gamma)\le L\}\vert}{L^{6g-6+2r}}
\end{equation}
where $\lambda_0\in\CC(\Sigma)$ is filling. We prove:

\begin{named}{Proposition \ref{prop-reduction}}
Let $(L_n)_n$ be a sequence with $\lim_{n\to\infty}\nu_{\gamma_0}^{L_n}=\alpha\cdot\mu_{\Thu}$ for some $\alpha\in\BR_+$. Then
$$\lim_{n\to\infty}\frac{\vert\{\gamma\in\CS_{\gamma_0}\vert\iota(\lambda_0,\gamma)\le L_n\}\vert}{{L_n}^{6g-6+2r}}=\alpha\cdot\mu_{\Thu}(\{\lambda\in\CM\CL(\Sigma)\vert\iota(\lambda_0,\lambda)\le 1\})$$
for every filling current $\lambda_0\in\CC(\Sigma)$.
\end{named}

As a direct consequence we will get that the existence or non-existence of the limit \eqref{eq101} does not depend on the concrete (filling) current $\lambda_0$. In fact, we get something better:

\begin{named}{Corollary \ref{kor-ratio}}
Let $\Sigma$ be a hyperbolic surface of finite area, and let $\lambda_1,\lambda_2\in\CC(\Sigma)$ be filling currents. Then we have
$$\lim_{L\to\infty}\frac{\vert\{\gamma\in\CS_{\gamma_0}\vert\iota(\lambda_1,\gamma)\le L\}\vert}{\vert\{\gamma\in\CS_{\gamma_0}\vert\iota(\lambda_2,\gamma)\le L\}\vert}=\frac{\mu_{\Thu}(\{\lambda\in\CM\CL(\Sigma)\vert\iota(\lambda_1,\lambda)\le 1\})}{\mu_{\Thu}(\{\lambda\in\CM\CL(\Sigma)\vert\iota(\lambda_2,\lambda)\le 1\})}$$
for every multicurve $\gamma_0$ in $\Sigma$. Here $\mu_{\Thu}$ is as always the Thurston measure on the space of measured laminations $\CM\CL(\Sigma)$.
\end{named}

\begin{bem}
In view of Corollary \ref{kor-ratio}, it follows from Mirzakhani's result \cite{Maryam-new} on the existence of limit \eqref{eq1} that the limit \eqref{eq101} also exists for any possible filling current for any arbitrary hyperbolic surface of finite type. For instance, it follows that the analogue of the limit \eqref{eq1} also exists if we measure lengths with respect to an arbitrary metric of negative curvature \cite{Otal}, or with respect to a singular flat structure \cite{Chris-moon-kasra}. All this might be worth noting because Mirzakhani's arguments, using trace relations, may be hard to apply directly in these situations. This is what we meant when we claimed that the results in this paper and in \cite{Maryam-new} are to some extent complementary.
\end{bem}

\begin{bem}
As we will explain in a short digression at the end of section \ref{subsec-counting}, it follows from Corollary \ref{kor-ratio} and the work of Mirzakhani \cite{Maryam-new} that more general limits of the form \eqref{eq101} exist, where one replaces the multicurve $\gamma_0$ by a current $\alpha$.
\end{bem}

As we have seen, to establish the existence or non-existence of either limit \eqref{eq1} and \eqref{eq101} what we have to figure out is whether the measures $\nu_{\gamma_0}^L$ converge. Our strategy is to relate these measures to some other measures which are supported from the very beginning on the space of measured laminations. To do so we need to establish a relationship between the multicurves in $\CS_{\gamma_0}$, which are in general non-simple, and simple multicurves. In some sense, establishing this relation is the main goal of the paper. 

In the case that $\Sigma$ has no punctures this relation is pretty straight forward. Namely, we will prove that ``generic" multicurves in $\CS_{\gamma_0}$ have intersections with extremely small angles: 

\begin{sat}\label{closed-small-angles}
Let $\Sigma$ be a closed hyperbolic surface of genus $g\geq 2$ and let $\measuredangle(\gamma)\in(0,\frac\pi2]$ denote the largest angle among the self-intersections of a multicurve $\gamma\subset\Sigma$. Then 
\[
\lim_{L\to\infty}\frac{1}{L^{6g-6}}\left\vert\left\{
\begin{array}{c}
\gamma\subset \Sigma \text{ multicurve},  \iota(\gamma,\gamma)= k, \\ \measuredangle(\gamma)\geq\delta,\, \ell_\Sigma(\gamma)\leq L
\end{array}
\right\}\right\vert=0
\]
for every $k$ and every $\delta>0$.
\end{sat}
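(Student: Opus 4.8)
The plan is to relate multicurves of type $\gamma_0$ to simple multicurves through a resolution map and then extract the estimate from the known equidistribution of simple multicurves. First one reduces to a single type: there are only finitely many $\Map(\Sigma)$-orbits of multicurves with exactly $k$ self-intersections — once put in standard position relative to a fixed pants decomposition such multicurves have bounded combinatorial complexity — so it suffices to fix a type $\gamma_0$ with $\iota(\gamma_0,\gamma_0)=k$ and prove $|\{\gamma\in\CS_{\gamma_0}:\measuredangle(\gamma)\ge\delta,\ \ell_\Sigma(\gamma)\le L\}|=o(L^{6g-6})$, working with geodesic representatives throughout. Next, fix, invariantly under $\Stab(\gamma_0)$, a resolution of each of the $k$ self-intersections of $\gamma_0$ and discard any inessential components produced; this propagates $\Map(\Sigma)$-equivariantly to a map $R\colon\CS_{\gamma_0}\to\CS_{\sigma_0}$ into a single orbit of simple multicurves (after possibly marking $\sigma_0$ so that $R$ is well defined), with $\ell_\Sigma(R(\gamma))\le\ell_\Sigma(\gamma)$ and with $R$ boundedly-to-one: each simple multicurve has at most $M=M(\gamma_0)$ preimages, since un-resolving is a surgery in $k$ bounded pieces that is pinned up to bounded ambiguity by the type. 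Any $\gamma\in\CS_{\gamma_0}$ with $\measuredangle(\gamma)\ge\delta$ and $\ell_\Sigma(\gamma)\le L$ lies in $R^{-1}(\sigma)$ for a simple multicurve $\sigma$ of length $\le L$, so the theorem reduces to the claim $(\star)$: for all but $o(L^{6g-6})$ of the $\sigma\in\CS_{\sigma_0}$ with $\ell_\Sigma(\sigma)\le L$, every $\gamma\in R^{-1}(\sigma)$ has $\measuredangle(\gamma)<\delta$. Indeed, $(\star)$ yields $|\{\gamma\in\CS_{\gamma_0}:\measuredangle(\gamma)\ge\delta,\ \ell_\Sigma(\gamma)\le L\}|\le M\cdot|\{\text{exceptional }\sigma:\ell_\Sigma(\sigma)\le L\}|=o(L^{6g-6})$.

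The geometric content behind $(\star)$ is twofold. First, a quantitative shortening lemma: if a self-intersection of a geodesic multicurve has angle $\ge\delta$ and the two geodesic sub-arcs issuing from it along the loop being resolved have length at least a fixed $\ell_0$, then the resolved multicurve is piecewise geodesic with a corner of angle in $[\delta,\pi-\delta]$ between segments of length $\ge\ell_0$, and the hyperbolic law of cosines shows that passing to its geodesic representative shortens it by at least some $\epsilon_0=\epsilon_0(\delta,\ell_0,\Sigma)>0$ — so such a crossing is genuinely transverse and its transversality is visible in a bounded surgery region. Second, a near-parallelism lemma: two non-crossing geodesic sub-arcs that $\epsilon$-fellow-travel over a long stretch are, along it, within angle $O(\epsilon)$ of being parallel; hence if at each of the $k$ surgery regions prescribed by the type the two arcs of $\sigma$ to be merged come within $\epsilon_\delta$ of one another, then every $\gamma\in R^{-1}(\sigma)$ has all self-intersection angles $<\delta$. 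The degenerate cases excluded here — a surgery region where the relevant sub-arc is short, or $\sigma$ itself having a short component — force $\sigma$ to carry a geodesic of length $<\ell_0+O(1)$ near a prescribed place; since there are only finitely many closed geodesics of bounded length, these $\sigma$ fall into finitely many families, each constrained to contain (or be built from) one of those geodesics and hence to fill a proper subsurface or to have a prescribed component, so each has size $O(L^{6g-8})=o(L^{6g-6})$, and they may be set aside.

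It remains to bound the number of $\sigma\in\CS_{\sigma_0}$ with $\ell_\Sigma(\sigma)\le L$ that are ``separated at some region,'' i.e. for which at some prescribed surgery region the two arcs to be merged stay $\epsilon_\delta$-separated. Qualitatively this is rare: along any sequence of such $\sigma$ whose rescalings $\sigma/L$ converge, the limit is a measured lamination — which has no transverse self-crossings — so the two merge-arcs of the near-limiting $\sigma$ are forced to be near-parallel and to come close, pushing the separated $\sigma$ into a $\mu_{\Thu}$-null set. To turn this into the $o(L^{6g-6})$ bound demanded by $(\star)$ one invokes Mirzakhani's equidistribution theorem for simple multicurves — the normalized counting measures $L^{-(6g-6)}\sum_{\sigma\in\CS_{\sigma_0},\,\ell_\Sigma(\sigma)\le L}\delta_{\sigma/L}$ converge to a multiple of $\mu_{\Thu}$ — together with the facts that the ``separated at a given region'' locus is closed and $\mu_{\Thu}$-null, and that there are only $k$ regions up to the finite symmetry permuting them; a union bound then completes $(\star)$, and with it the theorem.

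The main obstacle is precisely this last step: upgrading the qualitative statement that every convergent sequence of rescaled simple multicurves has near-parallel merge-arcs at each region to the quantitative assertion that the separated $\sigma$ number only $o(L^{6g-6})$ — which is exactly where the precise equidistribution of simple multicurves, rather than mere compactness of the space of currents, is essential, and where the argument is genuinely specific to situations in which such equidistribution is available. The subsidiary technical points are the construction of the equivariant resolution $R$ together with the bound $M$ on its fibres, the hyperbolic-geometry shortening lemma, and the verification that the degenerate short-loop families are genuinely of lower order.
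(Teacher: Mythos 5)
Your overall intuition---that generic long curves of a fixed type have nearly parallel strands at their crossings, so that large-angle crossings are rare---is the right one, but the proof as structured has a fatal gap at its central step: the resolution map $R$. A map $R\colon\CS_{\gamma_0}\to\CS_{\sigma_0}$ that is simultaneously $\Map(\Sigma)$-equivariant and boundedly-to-one cannot exist. Equivariance forces $R^{-1}(\sigma_0)\supseteq\Stab(\sigma_0)\cdot\gamma_0$, and since the stabilizer of a simple multicurve (which contains all Dehn twists about its components, among much else) does not virtually stabilize $\gamma_0$, this fiber is infinite. Concretely: if $\sigma_0=\sigma_1\sqcup\sigma_2$ and $\gamma_0$ is obtained by merging $\sigma_1$ and $\sigma_2$ at a single crossing along some arc $\alpha$, then varying $\alpha$ over its infinitely many homotopy classes---in particular over the $\Stab(\sigma_0)$-orbit of one choice---produces infinitely many curves of the same type, all with one self-intersection, all resolving to $\sigma_1\sqcup\sigma_2$. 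This is exactly why ``un-resolving is pinned up to bounded ambiguity by the type'' fails: the location of the surgery is additional data not determined by $\sigma$ and the type. In the paper this difficulty is the whole point: the analogous map $\pi_{\epsilon,k}$ is defined only on the generic subset of curves filling an almost-geodesic maximal radalla (so it is \emph{not} equivariant, as the paper stresses), the surgery loci are pinned down by the train-track structure, and the uniform bound on the fibers is the paper's hardest technical result (Proposition \ref{prop-cars} together with Proposition \ref{prop-key-map}, proved via a normal-form argument for curves carried by a radalla with a fixed weight vector). Moreover the construction of that map already presupposes Proposition \ref{generic}, i.e.\ essentially the statement you are trying to prove, so your architecture runs the logic backwards. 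A secondary but genuine problem is your opening reduction: there are \emph{not} finitely many $\Map(\Sigma)$-orbits of multicurves with exactly $k$ self-intersections (append $m$ parallel copies of a disjoint simple closed curve to a fixed $\gamma$ with $k$ crossings to get infinitely many types for every $m$), so one cannot sum $o(L^{6g-6})$ bounds over orbits.

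For comparison, the paper's proof of Theorem \ref{closed-small-angles} is direct and does not pass through any resolution map or through Mirzakhani's equidistribution theorem. One shows (Lemmas \ref{compact}--\ref{finite}) via Hausdorff limits that finitely many $\epsilon$-geodesic radallas carry all but finitely many elements of $\CS_k$; a dimension count on the integral solutions of the switch equations (Lemma \ref{poly-bound}, using Proposition \ref{prop-cars}) shows that the curves carried by non-maximal ones number $o(L^{6g-6})$; on a closed surface a maximal radalla is an honest maximal train-track; and any curve carried by an $\epsilon$-geodesic train-track has all self-intersection angles $<\delta$ once $\epsilon$ is small, since crossing strands run along branches of geodesic curvature $\le\epsilon$ and length $\ge 1/\epsilon$. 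The negligibility thus comes from a linear-algebra dimension count on weight spaces, not from equidistribution of simple multicurves. If you want to salvage your approach, the two things you must actually prove are (i) a non-equivariant resolution map defined only on a generic subset, with surgery loci determined geometrically, and (ii) the uniform bound on its fibers---and at that point you will have reconstructed the paper's argument.
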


Hence, if $\Sigma$ is closed it follows from Theorem \ref{closed-small-angles} that generically elements of $\CS_{\gamma_0}$ have, if their length is large, extremely flat self-intersections. This means thus that there is a well-determined way to resolve the self-intersections of such  generic $\gamma$ to produce a simple multicurve which locally is almost parallel to the original curve. Doing so we obtain a map
$$\pi_{\epsilon,\gamma_0}:\CS_{\gamma_0}^\epsilon\to\CM\CL_\BZ(\Sigma)$$
from a {\em generic} subset $\CS_{\gamma_0}^\epsilon$ of $\CS_{\gamma_0}$ to the set $\CM\CL_\BZ(\Sigma)$ of simple multicurves in $\Sigma$. Here, genericity of $\CS_{\gamma_0}^\epsilon$ just means that 
$$\lim_{L\to\infty}\frac {\vert\{\gamma\in\CS_{\gamma_0}\setminus\CS_{\gamma_0}^\epsilon\vert\ell_\Sigma(\gamma)\le L\}\vert}{L^{6g-6+2r}}=0.$$
The map $\pi_{\epsilon,\gamma_0}$ maps multicurves in $\CS_{\gamma_0}^\epsilon$ to simple multicurves of basically the same length. By itself, this property already implies that $\pi_{\epsilon,\gamma_0}$ is finite-to-one. What is more remarkable is that the cardinality of the preimages $\vert\pi_{\epsilon,\gamma_0}^{-1}(\gamma)\vert$ is uniformly bounded from above (see section \ref{sec-themap} for precise statements). 

The basic idea of the proof of Proposition \ref{prop-sublimit} is to push the measures $\nu_{\gamma_0}^L$ via the map $\pi_{\epsilon,\gamma_0}$ to the space of measured laminations and compare them to the so obtained measures $\mu_{\epsilon,\gamma_0}^L$. The latter measures can be more concretely written as 
$$\mu_{\epsilon,\gamma_0}^L=\frac 1{L^{6g-6+2r}}\sum_{\gamma\in\CM\CL_\BZ}\vert\pi_{\epsilon,\gamma_0}^{-1}(\gamma)\vert\delta_{\frac 1L\gamma}$$
where $\delta_x$ is again the Dirac measure centred at $x$. We will show that the measures $\mu_{\epsilon,\gamma_0}^L$ and $\nu_{\gamma_0}^L$ get closer and closer to each other as $L$ increases (see Lemma \ref{alllimitsthesame}). In particular, to establish the convergence of $\nu_{\gamma_0}^L$ it suffices to prove that $\mu_{\epsilon,\gamma_0}^L$ converges. 

The key observation is that, since $\vert\pi_{\epsilon,\gamma_0}^{-1}(\gamma)\vert$ is uniformly bounded, any accumulation point $\mu$ of $(\mu_{\epsilon,\gamma_0}^L)$ is absolutely continuous with respect to the Thurston measure $\mu_{\Thu}$ on $\CM\CL(\Sigma)$. Since $\mu$ is also a limit of the mapping class group invariant measures $\nu_{\gamma_0}^L$, it is itself also mapping class group invariant. Thus we get from a theorem due to Masur \cite{Masur} asserting that $\mu_{\Thu}$ is ergodic, that the limit $\mu$ is actually a multiple of the Thurston measure, as we claimed in Proposition \ref{prop-sublimit}.

Note at this point that if the map $\pi_{\epsilon,\gamma_0}$ were equivariant under the mapping class group, then the existence of the limit $\lim_{L\to\infty}\mu_{\epsilon,\gamma_0}^L$ would directly follow from the work of Mirzakhani \cite{Maryam} on the distribution of simple multicurves. However, the map $\pi_{\epsilon,\gamma_0}$ is unfortunately not equivariant because its domain $\CS_{\gamma_0}^\epsilon$ is not mapping class group invariant. 

To partially by-pass this problem we will observe that if $\tau$ is an almost geodesic maximal train-track in $\Sigma$ then
\begin{equation}\label{eq-adele2}
\vert\pi_{\epsilon,\gamma_0}^{-1}(\phi(\gamma))\vert\ge\vert\pi_{\epsilon,\gamma_0}^{-1}(\gamma)\vert
\end{equation}
for all $\gamma$ carried by and filling $\tau$ and for every mapping class $\phi$ in the semi-group
$$\Gamma_\tau=\{\phi\in\Map(\Sigma)\vert\phi(\tau)\prec\tau\}$$
consisting of those mapping classes which map $\tau$ to a train-track carried by $\tau$. This is relevant because from \eqref{eq-adele2} and Proposition \ref{prop-sublimit} we get:

\begin{named}{Proposition \ref{prop-semigroup-limit}}
Let $\tau$ be a maximal recurrent train-track and $U\subset\{\lambda\in\CM\CL(\Sigma)\vert\lambda\prec\tau\}$ open with $\mu_{\Thu}(U)>0$ and $\mu_{\Thu}(\D U)=0$. Suppose also that the following holds:
\begin{quote}
(*) If $\CI\subset\{\gamma\in\CM\CL_\BZ(\Sigma)\vert\gamma\prec\tau\}$ is a non-empty $\Gamma_\tau$-invariant set of simple multicurves carried by $\tau$ then there is $\alpha>0$ with
$$\lim_{L\to\infty}\frac 1{L^{6g-6+2r}}\vert\CI\cap L\cdot U\vert=\alpha\cdot\mu_{\Thu}(U).$$
\end{quote}
Then the limit $\lim_{L\to\infty}\nu_{\gamma_0}^L$ exists.
\end{named}

The virtue of Proposition \ref{prop-semigroup-limit} is that it reduces the problem of showing that the measures $\nu_{\gamma_0}^L$ converge to a problem about distribution of simple multicurves. On the other hand, working with semigroups is harder than working with groups.

However, in the case of a punctured torus we can identify the semigroup $\Gamma_\tau$ with the positive semigroup $\SL_2\BN$ of the mapping class group $\Map(\Sigma)\simeq\SL_2\BZ$, the set of multicurves carried by $\tau$ with $\BN^2$, the space of measured laminations with $\BR^2/\pm 1$, and the Thurston measure with Lebesgue measure. In other words we are in a very concrete situation and we can use results of Maucourant \cite{pakito} to prove:

\begin{sat}\label{thm-density}
Every $\SL_2\BN$-invariant set $\CI\subset\BN^2$ has a density, meaning that there is $\alpha\in\BR$ with  
$$\lim_{L\to\infty}\frac 1{L^2}\vert \CI\cap L\cdot U\vert=\alpha\cdot\vol(U)$$
for any $U\subset\BR^2$ open and bounded by a rectifiable Jordan curve. Here $L\cdot U=\{v\in\BR^2\vert \frac 1Lv\in U\}$ is the set obtained by scaling $U$ by $L$ and $\vol(U)$ is the area of $U$ with respect to Lebesgue measure. 
\end{sat}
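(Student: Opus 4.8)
The plan is to reduce the statement, via the Stern--Brocot coding of $\BN^2$, to an equidistribution statement for a \emph{single} semigroup orbit in the plane, which is the point where the results of Maucourant \cite{pakito} are brought to bear; everything else is bookkeeping.

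First I would use two classical facts: that $\SL_2\BN$ is the \emph{free} monoid on $S=\left(\begin{smallmatrix}1&1\\0&1\end{smallmatrix}\right)$ and $T=\left(\begin{smallmatrix}1&0\\1&1\end{smallmatrix}\right)$, and that $w\mapsto w\cdot(1,1)^T$ is a bijection between $\{S,T\}^\ast$ and the set of primitive vectors in the open quadrant $\BN_{\ge 1}^2$ (the Stern--Brocot tree rooted at $(1,1)$); the remaining primitive vectors $(1,0),(0,1)$, and more generally all integer vectors supported on the two axes, form a set of $o(L^2)$ points in $L\cdot U$ and may be discarded. Then, peeling off greatest common divisors, I write $\CI=\bigsqcup_{d\ge 1} d\cdot\CI_d$ with $\CI_d=\{v_0\text{ primitive}\mid d v_0\in\CI\}$, which is again $\SL_2\BN$-invariant. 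Each $\CI_d$ consists of primitive vectors, so once it is shown to have a density $\alpha_d$ one gets $\alpha_d\le 6/\pi^2$ (the density of all primitive vectors of the open quadrant), and a routine dominated-convergence argument — the $d$-th term of $\frac1{L^2}\lvert\CI\cap L U\rvert$ is $\le C(U)/d^2$ uniformly in $L$ — shows $\CI$ itself has density $\sum_d\alpha_d/d^2\le 1$. Hence I may assume $\CI$ is a set of primitive vectors in the open quadrant, coded by a set of words $\Lambda\subseteq\{S,T\}^\ast$. Now $\SL_2\BN$-invariance of $\CI$ says exactly that $\Lambda$ is closed under prepending arbitrary words, so $\CI$ is the \emph{disjoint} union $\CI=\bigsqcup_{v'\in V}\SL_2\BN\cdot v'$, where $V$ indexes the (possibly infinite) antichain of $\prec$-minimal seeds of $\Lambda$; disjointness is immediate from freeness of the monoid, and each orbit $\SL_2\BN\cdot v'$ is the Stern--Brocot subtree rooted at $v'$.

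The heart of the matter is the counting statement for one orbit: for every primitive $v'$ there is $\beta(v')>0$ with
$$\frac1{L^2}\bigl\lvert\SL_2\BN\cdot v'\cap L\cdot U\bigr\rvert\ \xrightarrow[L\to\infty]{}\ \beta(v')\cdot\vol(U)$$
for every $U$ bounded by a rectifiable Jordan curve, \emph{together with} a bound $\frac1{L^2}\lvert\SL_2\BN\cdot v'\cap L\cdot U\rvert\le C(U)\,\lVert v'\rVert^{-2}$ valid for all $L$. This is what I would extract from Maucourant's work \cite{pakito} on the equidistribution of such semigroup orbits in the plane (equivalently, of continued-fraction convergents in sectors), and it is the step I expect to be the main obstacle: what is needed is not merely the existence of the limit for each orbit, but enough uniformity in the seed $v'$ to be allowed to sum over $V$. (The rectifiability of $\partial U$ is used precisely to make $\partial U$ negligible for the underlying lattice-point counts; as a soft check that the shape of the answer is forced, one can also observe directly that any weak-$\ast$ subsequential limit of $\frac1{L^2}\sum_{v\in\CI}\delta_{v/L}$ is dominated by Lebesgue measure, hence of the form $f\,d\mathrm{Leb}$ with $0\le f\le 1$, is $0$-homogeneous, and satisfies $f\le f\circ A$ for all $A\in\SL_2\BN$, whence ergodicity of the associated Gauss/Farey map forces $f$ to be a.e. constant — but this argument does not by itself yield existence of the limit.)

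Granting the one-orbit statement, the proof closes quickly. Since the orbits $\SL_2\BN\cdot v'$, $v'\in V$, are pairwise disjoint and all contained in the set of primitive vectors of the open quadrant, summing the limits over any finite subset of $V$ and comparing with the density $6/\pi^2$ gives $\sum_{v'\in V}\beta(v')\le 6/\pi^2<\infty$; the uniform bound then licenses interchanging $\lim_{L\to\infty}$ with $\sum_{v'\in V}$, so $\frac1{L^2}\lvert\CI\cap L U\rvert\to\bigl(\sum_{v'\in V}\beta(v')\bigr)\vol(U)$. This is the density of the primitive set $\CI$, and feeding it back through the greatest-common-divisor reduction of the first paragraph produces the density $\alpha$ of an arbitrary $\SL_2\BN$-invariant $\CI\subseteq\BN^2$.
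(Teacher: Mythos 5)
Your architecture is the same as the paper's: write $\CI$ as a disjoint union of $\SL_2\BN$-orbits (disjointness coming from freeness of the semigroup, i.e.\ determinism of the Euclidean algorithm on the open quadrant), establish a density statement for a single orbit together with a bound uniform enough in the seed to control the tail, and then sum over seeds. One simplification: your gcd-peeling layer is unnecessary, because the single-orbit density $\frac{6}{\pi^2 pq}$ of $S_{p,q}=\SL_2\BN\cdot(p,q)^t$ holds for an \emph{arbitrary} seed, primitive or not, so one can decompose any invariant set directly into orbits without first sorting by $\gcd$.

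The step you explicitly leave open --- the single-orbit limit with uniformity in the seed --- is a genuine gap as written, and it is exactly the content of the paper's Proposition \ref{density-orbits}; Maucourant's theorem does not hand it to you directly. What \cite{pakito} provides is convergence of $\nu_L=\frac 1{L^2}\sum_{A\in\SL_2\BN}\delta_{A/L}$ as measures on the $4$-dimensional space $M_{2,2}(\BR)$. To get the orbit statement one pushes $\nu_L$ forward under $A\mapsto A(p,q)^t$, observes that any limit of the push-forwards is dominated by Lebesgue measure (hence absolutely continuous, so the convergence extends to all sets with rectifiable boundary), is $\SL_2\BN$-invariant and scales like $\vol$, and is therefore $c\cdot\vol$ for some constant $c$. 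Both the value $c=\frac 6{\pi^2pq}$ and the uniform bound then come from an elementary transpose trick: with $\Delta_{a,b}=\{(x,y)\in(0,\infty)^2\mid ax+by\le 1\}$ one has $A(p,q)^t\in L\cdot\Delta_{1,1}$ if and only if $A^t(1,1)^t\in L\cdot\Delta_{p,q}$, and $\SL_2\BN$ is stable under transposition. Since the orbit of $(1,1)^t$ is the set of coprime pairs, the first identity computes $c$ from the classical density $6/\pi^2$; and since that orbit intersected with $L\cdot\Delta_{p,q}$ is a fortiori a set of lattice points of the open triangle, one gets $\vert S_{p,q}\cap\{x+y\le L\}\vert\le L^2/(2pq)$ for \emph{all} $L$. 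This is the uniform-in-seed bound you need (summable over the roots of the disjoint orbits, since the finite partial sums of $\sum 6/(\pi^2pq)$ are densities of disjoint subsets of $\BN^2$ and hence bounded by $1$), and it requires nothing further from Maucourant --- it is a bare lattice-point count. Without supplying this, your final interchange of $\lim_L$ with the sum over seeds is not justified.
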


Theorem \ref{sat2} will follow from Theorem \ref{thm-density}, Proposition \ref{prop-semigroup-limit} and Proposition \ref{prop-reduction}.
\medskip

Before concluding the introduction, note that Theorem \ref{closed-small-angles} was only stated for closed surfaces. Indeed, the statement is wrong in general. In the presence of cusps there are small and large angles due to the fact that the intersections might well occur in the vicinity of the cusps. This means that the construction of the map $\pi_{\epsilon,\gamma_0}$ is going to be more subtle in the presence of cusps than in the closed case. In fact, section \ref{sec-radallas1} and section \ref{sec-radallas2} are basically devoted to constructing the map $\pi_{\epsilon,\gamma_0}$ and to study some of its properties. More concretely, in section \ref{sec-radallas1} we introduce a generalization of the notion of train-track, to which we give the beautiful Finnish name of {\em radalla}. A radalla is basically an immersed version of a train-track, and the main result of this section is Proposition \ref{prop-cars} which asserts that {\em for every $k$ and every radalla $\hat\tau$ there is a uniform upper bound for the cardinality of the set of multicurves carried by $\hat\tau$, with $k$ self-intersections and which represent a given solution of the switch equations}. In section \ref{sec-radallas2} we study the Hausdorff limits of sequences of multicurves with self-intersections and prove that for every $k$ there are finitely many almost geodesic radallas which carry all but finitely many multicurves with at most $k$ self-intersection. We use this fact to prove that the number $\vert\{\gamma\in\CS_{\gamma_0}\vert\ell_X(\gamma)\le L\}\vert$ grows polynomially of degree $L^{6g-6+2r}$ (Corollary \ref{poly-growth}) and to prove Theorem \ref{closed-small-angles}. Finally, in section \ref{sec-radallas2} we also construct the map $\pi_{\epsilon,\gamma_0}$. In section \ref{sec-measures} we study the measures $\nu_{\gamma_0}^L$ and $\mu_{\epsilon,\gamma_0}^L$ and prove Proposition \ref{prop-sublimit}, Proposition \ref{prop-reduction}, Corollary \ref{kor-ratio} and Proposition \ref{prop-semigroup-limit}. In section \ref{sec-torus} we discuss the case when $\Sigma$ is a once-punctured torus, prove Theorem \ref{thm-density} and conclude the proof of Theorem \ref{sat2}. 

\begin{bem}
Before concluding we would like to point out that all the results in this paper hold true if we replace the mapping class group $\Map(\Sigma)$ by one of its finite index subgroups $\Gamma$, that is, if we consider the set $\Gamma\cdot\gamma_0$ instead of the set $\CS_{\gamma_0}=\Map(\Sigma)\cdot\gamma_0$. The point is that we rely on Masur's theorem on the ergodicty of the Thurston measure and this result also holds true for finite index subgroups of the mapping class group.
\end{bem}

\medskip

\noindent{\bf Acknowledgements.} Many thanks are due to Maryam Mirzakhani for interesting comments and for sharing a draft of her paper \cite{Maryam-new}. We also thank Fran\c cois Maucourant for basically proving Proposition \ref{density-orbits} for us. We also thank Benjamin Bandli for interesting conversations. We are also grateful to the organizers of the Sixth Iberoamerican Congress on Geometry, especially to Ara Basmajian, because it was during this event that we began this work. In fact, we were motivated by a beautiful talk by Moira Chas.


\section{}\label{sec-radallas1}

Train-tracks are a key tool to study the structure of the set of all simple curves on surfaces. In this section we introduce a small variation, basically an immersed version of a train-track, which we will use to study curves with self-intersections. We refer to \cite{Penner-Harer,Papadopoulos,Hatcher} for basic facts about train-tracks.

\subsection{Train-tracks and radallas}
By a smoothly embedded 1-complex $\tau$ in a surface $\Sigma$ we mean a finite embedded complex whose edges are smoothly embedded arcs with well-defined tangent lines at the end-points. We moreover require that for every vertex $v$ all the lines tangent to edges adjacent to it agree -- we denote this line by $T_v\tau$. 

If $\tau$ is such an embedded 1-complex and $v\in\tau$ is a vertex, then the set of (germs of) edges adjacent to $v$ can be divided into two sets, according to the two directions of $T_v\tau$. We say that $\tau$ is a pre-train-track if these two sets are non-empty for each vertex -- equivalently, every vertex $v$ of $\tau$ is contained in the interior of a smoothly embedded arc $I\subset\tau$. A {\em complementary region} $\Delta$ of a pre-train-track $\tau$ is the metric completion of a connected component of $\Sigma\setminus\tau$. Note that the boundary of $\Delta$ is smooth except at a finite number of cusp points corresponding to vertices of $\tau$. If $v$ happens to be a trivalent vertex, then $v$ corresponds to a single cusp in a single complementary region. A pre-train-track $\tau$ is a {\em train-track} if no complementary region is a disk with at most 2 cusps or a disk with one puncture and no cusp.

\begin{bem}
We will only consider trivalent train-tracks, meaning that all vertices have degree $3$.
\end{bem}

By definition, train-tracks are embedded in $\Sigma$ -- we now define an immersed version:

\begin{defi*}
A {\em radalla}\footnote{According to Google Translate, {\em radalla} means {\em track} in Finnish. According to our Finnish connection it is mostly used in the form {\em olla radalla} as in {\em to be on track to pick up girls in bars} but we ignored it when we coined this term.} in a surface $\Sigma$ is a triple $(\hat\tau,\tau,\phi:\hat\tau\looparrowright\Sigma)$ where
\begin{enumerate}
\item $\hat\tau$ is a finite graph and $\tau\subset\hat\tau$ is a subgraph containing all vertices. 
\item $\phi:\hat\tau\looparrowright\Sigma$ is a smooth immersion whose restriction to $\tau$ is an embedding and $\phi(\tau)$ is a train-track in $\Sigma$.
\item If $e$ is an edge of $\hat\tau\setminus\tau$, then 
\begin{itemize}
\item $\phi(e)$ is contained in a complementary region $\Delta$ of the train-track $\phi(\tau)$.
\item The end points of $e$ are mapped into cusps of $\Delta$.
\item $\phi\vert_e$ cannot be homotoped relative to the endpoints to a map with image in $\phi(\tau)$.
\item $\phi\vert_e$ cannot be homotoped to $\phi\vert_{e'}$ relative to the endpoints for any edge $e'$ of $\hat\tau\setminus\tau$ distinct from $e$.
\end{itemize}
\end{enumerate}
\end{defi*}

We will often denote the radalla $(\hat\tau,\tau,\phi:\hat\tau\looparrowright\Sigma)$ just by $\hat\tau$ and we will identify the subgraph $\tau$ with the train-track $\phi(\tau)$. In fact, one should think of a radalla $\hat\tau$ as a diagonal extension of the underlying train-track $\tau$ where the added diagonals are possibly neither simple nor disjoint -- compare with figure \ref{figradalla}. 

\begin{figure}[h]
\includegraphics[width=8cm, height=4cm]{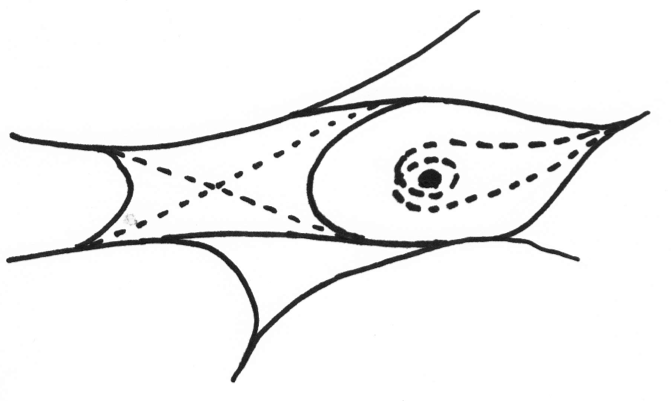}
\caption{Part of the image of a radalla $\hat\tau$ -- the solid lines are part of the underlying train-track $\tau$ and the black dot is a cusp.}
\label{figradalla}
\end{figure}

Recall that an immersed curve $\gamma:\BS^1\to\Sigma$ is isotopic to a second curve $\gamma':\BS^1\to\Sigma$ if there is a smooth map 
$$\BS^1\times[0,1]\to\Sigma,\ \ (t,s)\mapsto\gamma_s(t)$$
such that $\gamma_0=\gamma$, $\gamma_1=\gamma'$ and $\gamma_s$ is an immersion for all $s$. Note that by definition any curve isotopic to an immersed curve is immersed as well. 

\begin{defi*}
Let $\hat\tau=(\hat\tau,\tau,\phi:\hat\tau\looparrowright\Sigma)$ be a radalla. An immersed but possibly non-simple curve $\gamma:\BS^1\to\Sigma$ is {\em carried} by $\hat\tau$ if there is a map $\gamma':\BS^1\to\hat\tau$ such that $\gamma$ and $\phi\circ\gamma'$ are isotopic.
\end{defi*}
 
Curves carried by a radalla behave very much like curves carried by a train-track. For instance, if $\Sigma$ is endowed with a hyperbolic metric then there is a constant $L=L(\hat\tau,\Sigma)$ such that for every curve $\gamma:\BR\to\hat\tau$ for which $\phi\circ\gamma:\BR\to\Sigma$ has constant velocity, any lift $\widetilde{\phi\circ\gamma}:\BR\to\tilde\Sigma=\BH^2$ is an $L$-bilipschitz embedding. In particular, if $\gamma:\BS^1\to\hat\tau$ is such that $\phi\circ\gamma$ is a smooth immersion, then $\gamma$ is an essential curve in $\Sigma$. Moreover, if $\gamma,\gamma':\BS^1\to\hat\tau$ where $\phi\circ\gamma$ and $\phi\circ\gamma'$ are homotopic smooth immersions, then there is $h:\BS^1\to\BS^1$ with $\gamma'=\gamma\circ h$.

Recall that a train-track $\tau$ is carried by another train-track $\tau'$ -- that is $\tau\prec\tau'$ -- if the embedding of $\tau$ into $\Sigma$ is smoothly isotopic to an immersion $\tau\to\Sigma$ with image contained in $\tau'$. This definition carries over to radallas as follows: we say that {\em $\hat\tau=(\hat\tau,\tau,\phi:\hat\tau\looparrowright\Sigma)$ is carried by $\hat\tau'=(\hat\tau',\tau',\phi':\hat\tau'\looparrowright\Sigma)$} -- and write $\hat\tau\prec\hat\tau'$ -- if there is a smooth immersion $\psi:\hat\tau\to\hat\tau'$ mapping $\tau$ into $\tau'$ and such that $\phi'\circ\psi$ is homotopic to $\phi$. Clearly, being carried is a transitive property, meaning for instance that if multicurve $\gamma\prec\hat\tau$ and if $\hat\tau\prec\hat\tau'$ then $\gamma\prec\hat\tau'$.
\medskip

While train-tracks and radallas are topological objects, we will be mostly interested in those that are geometrically well-behaved. A radalla $\hat\tau=(\hat\tau,\tau,\phi:\hat\tau\looparrowright\Sigma)$ is {\em $\epsilon$-geodesic} if the image $\phi(e)$ of each edge $e$ of $\hat\tau$ has at most geodesic curvature $\epsilon$ and length at least $\frac 1\epsilon$. Note that it follows from this definition that, as long as $\epsilon$ is small enough, every curve carried by an $\epsilon$-geodesic radalla has at least length $\frac 1{2\epsilon}$. Also note that, since all the vertices of $\hat\tau$ are also vertices of $\tau$, if the train-track $\phi(\tau)$ is $\epsilon$-geodesic, then the radalla is isotopic to an $\epsilon$-geodesic radalla.

\subsection{Thickenings of radallas}

Train-tracks can be consider as graphs or as band complexes, and the same is true for radallas. We recall briefly the definition of a band complex. For us, a {\em band} is just a rectangle $[0,a]\times[0,b]$ with long horizontal sides and short vertical sides ($a>>>b$), foliated by the vertical segments $\{t\}\times[0,b]$. A {\em band complex} is a space $X$ obtained by gluing finitely many bands together along the short vertical sides in such a way that in the end the whole boundary of $X$ consists of the horizontal components of the bands. Note that the vertical foliations of the bands match up to a foliation $\CF_X$ of the band complex $X$. 

\begin{defi*}
A {\em thickening} of a radalla $(\hat\tau,\tau,\phi:\hat\tau\looparrowright\Sigma)$ is a triple $(X,\iota,\bar\phi)$ where
\begin{itemize}
\item $X$ is a surface endowed with a structure of a band complex,
\item $\iota:\hat\tau\to X$ is an embedding transversal to the vertical foliation $\CF_X$ such that each band contains a leaf of $\CF_X$ which meets $\iota(\hat\tau)$ exactly once, and
\item $\bar\phi:X\to\Sigma$ is an immersion with $\phi=\bar\phi\circ\iota$. 
\end{itemize}
\end{defi*}

We note that each radalla has a thickening and that thickenings are unique up to homeomorphism of $X$ and isotopy of $\bar\phi$. Note also that a curve $\gamma:\BS^1\to\Sigma$ is carried by a radalla $(\hat\tau,\tau,\phi:\hat\tau\looparrowright\Sigma)$ if and only if whenever $(X,\iota,\bar\phi)$ is a thickening there is an immersion $\bar\gamma:\BS^1\to X$ transversal to the vertical foliation $\CF_X$ and with $\bar\phi\circ\bar\gamma$ isotopic to $\gamma$. The curve $\bar\gamma:\BS^1\to X$ is unique up to isotopy. 
\medskip

Recall that by a {\em multicurve} $\lambda$ in $\Sigma$ we mean a closed immersed 1-manifold, each one of whose components represents a primitive non-peripheral element in $\pi_1(\Sigma)$, and that $\iota(\lambda,\lambda)$ is the minimum number of self-intersections of those $\lambda'$ in general position and isotopic to $\lambda$. The condition that each component of a multicurve represents a non-trivial {\em primitive} element in $\pi_1(\Sigma)$ amounts to asserting that each multicurve is determined by the associated geodesic current. Moreover, the self-intersection number agrees with the self-intersection number when we see multicurves as currents.
\medskip

Suppose that the multicurve $\lambda$ in $\Sigma$ is carried by a radalla $\hat\tau=(\hat\tau,\tau,\phi:\hat\tau\looparrowright\Sigma)$, meaning that each component is carried by $\hat\tau$. Let $X=(X,\iota,\bar\phi)$ be a thickening of $\hat\tau$ and $\bar\lambda\subset X$ a multicurve transversal to the vertical foliation $\CF_X$ such that $\bar\phi(\bar\lambda)$ is isotopic to $\lambda$. Since $\bar\lambda$ is unique up to isotopy we get that the self-intersection number
\begin{equation}\label{eq-poor-greeks}
\iota_X(\lambda,\lambda)\stackrel{\hbox{\tiny def}}=\iota(\bar\lambda,\bar\lambda)
\end{equation}
of $\bar\lambda$ with itself depends only on $\lambda$ and $X$. We note that 
$$\iota(\lambda,\lambda)\ge\iota_X(\lambda,\lambda)$$
but that in general equality does not hold: for instance the self-intersections arriving from the edges of $\hat\tau$ which are not embedded under $\phi$ are not counted when computing $\iota_X(\lambda,\lambda)$.

\subsection{Switch equations}

Let $(\hat\tau,\tau,\phi:\hat\tau\looparrowright\Sigma)$ be a radalla and denote by $E(\hat\tau)$ and $V(\hat\tau)$ the sets of edges and vertices of $\hat\tau$. At each vertex $v$ choose an orientation of $T_v\hat\tau$ and denote by $E_v^+(\hat\tau)$ (resp. $E_v^-(\hat\tau)$) all the (germs of) edges starting at $v$ in positive (resp. negative) direction. These choices yield a linear map
$$W_{\hat\tau}:\BR^{E(\hat\tau)}\to\BR^{V(\hat\tau)},\ \ (a_e)_{e\in E(\hat\tau)}\mapsto\left(\sum_{e\in E_v^+(\hat\tau)}a_e-\sum_{e\in E_v^-(\hat\tau)}a_e\right)_{v\in V(\hat\tau)}$$
We refer to the entries of the elements in $\BR_+^{E(\hat\tau)}$ as {\em weights}. Elements in $\ker(W_{\hat\tau})$ are said to satisfy the {\em switch equations}. More concretely, $w\in\BR_+^{E(\hat\tau)}$ satisfies the switch equations if at every vertex the sum of positive weights equal the sum of negative weights.

Every curve $\gamma:\BS^1\to\hat\tau$ with $\phi\circ\gamma$ an immersion yields a solution $\omega_\gamma$ to the weight equation by associating to each edge $e\in E(\hat\tau)$ the cardinality of $\gamma^{-1}(x)$ for some interior point $x\in e$. Note that $\omega_\gamma=\omega_{\gamma'}$ if $\phi\circ\gamma$ and $\phi\circ\gamma'$ are homotopic. The vector $\omega_\gamma$ associated to a multicurve is the sum of the vectors associated to the individual components.

Basically, the difference between a train track and a radalla is that the later is not embedded, meaning that there are edges which cross themselves or which cross another edge. This implies that often curves carried by a radalla are going to have self-intersections. In fact, if $e$ and $e'$ are (possibly equal) edges of a radalla $\hat\tau$ with $\iota(e,e')=r$ and if $\gamma:\BS^1\to\hat\tau$ is an immersion with associated vector of weights $\omega_\gamma$ then $\gamma$ has at least $r\cdot\omega_\gamma(e)\cdot\omega_\gamma(e')$ self-intersections. Here $\iota(e,e')$ is the minimal number of interior intersection points of edges isotopic to $e$ and $e'$ in the complement of the underlying train-track and relative to the respective endpoints. 

\begin{bem}
In fact, it is possible to establish a more precise version of this last fact. Namely, if $(\hat\tau,\tau,\phi:\hat\tau\looparrowright\Sigma)$ is a radalla with thickening $(X,\iota,\bar\phi)$, then we have
$$\iota(\lambda,\lambda')=\iota_X(\lambda,\lambda')+\sum_{\{e,e'\}\subset E(\hat\tau)\setminus E(\tau)}\iota(e,e')\cdot\omega_\lambda(e)\cdot\omega_{\lambda'}(e')$$
for  any two, possibly equal, multicurves $\lambda,\lambda'$ carried by $\hat\tau$. Here $\iota_X(\lambda,\lambda')$ is defined as in \eqref{eq-poor-greeks}.
\end{bem}

\subsection{The meat}

It is well-known that a simple multicurve $\gamma$ carried by a train-track is determined by the associated vector of weights $\omega_\gamma$. It is on the other hand easy to find examples showing that this is no longer true if the multicurves under consideration are not simple. We will see however that as long as one only allows a bounded number of self-intersections, then $\gamma$ is determined by $\omega_\gamma$ up to bounded indeterminacy. In fact, Proposition \ref{prop-cars} is the most important result of this section: 

\begin{prop}\label{prop-cars}
For every radalla $(\hat\tau,\tau,\phi:\hat\tau\looparrowright\Sigma)$ and every $k$ there is $K$ such that for every integral positive solution $\omega\in\BZ_+^{E(\hat\tau)}\subset\BR^{E(\hat\tau)}$ of the switch equation there are at most $K$ homotopy classes of multicurves $\gamma$ carried by $\hat\tau$ with $\omega_\gamma=\omega$ and with $\iota(\gamma,\gamma)= k$.
\end{prop}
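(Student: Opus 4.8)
The plan is to work inside a thickening $(X,\iota,\bar\phi)$ of the radalla and analyze how a multicurve $\gamma$ carried by $\hat\tau$ with $\omega_\gamma=\omega$ is built from the combinatorial data $\omega$ together with a bounded amount of extra "crossing'' information. Recall that a \emph{simple} multicurve carried by a train-track is completely determined by its weight vector because, band by band, the $\omega(e)$ strands running through the band carrying edge $e$ can be connected to the strands in adjacent bands in only one way compatible with simplicity. For a radalla the strands running along a non-embedded diagonal edge $e\in E(\hat\tau)\setminus E(\tau)$ may cross each other and cross strands of other diagonals, and at a switch the matching of incoming to outgoing strands is no longer forced. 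The key point is that every deviation from the "simple/parallel'' matching produces self-intersections, and $\iota(\gamma,\gamma)=k$ caps the total number of such deviations.

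\emph{Step 1: Localize the indeterminacy.} First I would fix, once and for all, the "canonical'' way of routing $\omega(e)$ parallel strands through each band and the "canonical'' (nested, non-crossing) way of matching strands at each switch of $\tau$; this produces, for a given $\omega$, a unique \emph{simple} diagram in $X$ away from the diagonal edges. Any multicurve $\gamma$ carried by $\hat\tau$ with $\omega_\gamma=\omega$ differs from this canonical picture only by (a) permutations of the strand endpoints at the finitely many switches, and (b) the internal braiding of the bundles of strands running along the diagonal edges. The number of switches and diagonal edges is bounded in terms of $\hat\tau$ alone.

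\emph{Step 2: Bound the number of self-intersections from below by the complexity of the matching/braiding.} Here I would use the formula quoted in the Remark preceding the Proposition, $\iota(\lambda,\lambda)=\iota_X(\lambda,\lambda)+\sum_{\{e,e'\}}\iota(e,e')\,\omega(e)\,\omega(e')$, together with an elementary fact: if at a switch the matching permutation has $t$ inversions relative to the canonical nested one, then $\gamma$ acquires at least $t$ self-intersections in $X$; likewise a nontrivial braiding of a diagonal bundle contributes a corresponding number of crossings. Since $\iota(\gamma,\gamma)=k$, the total number of inversions over all switches and the total braiding complexity along all diagonals are each $\le k$. In particular, \emph{at most $k$ switches are "non-canonical''}, and at each of them the matching permutation lies in a finite set (all permutations of $\le$ (max degree) strands with $\le k$ inversions — but the number of strands can be large, so here I need the bound to be in terms of the number of inversions, not the number of strands: a permutation with at most $k$ inversions is determined by recording, for each of the at most $2k$ positions actually moved, where it goes, giving at most $(2k)^{2k}$ possibilities, say). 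The same kind of bound controls the braiding data along each diagonal edge.

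\emph{Step 3: Reconstruct $\gamma$ from bounded data.} Finally I would argue that $\gamma$ is determined up to homotopy by: the weight vector $\omega$ (given), the choice of which $\le k$ switches are non-canonical and the matching permutation there (finitely many choices, bound independent of $\omega$), and the braiding data along each diagonal (again finitely many choices). Multiplying the finite number of choices gives the desired $K=K(\hat\tau,k)$. I expect the main obstacle to be Step 2: making precise the claim that every unit of "matching/braiding complexity'' is paid for by at least one honest self-intersection of $\gamma$ in $\Sigma$ (not merely a crossing in $X$ that might be isotoped away), i.e. correctly bookkeeping $\iota(\gamma,\gamma)$ versus $\iota_X(\gamma,\gamma)$ so that the $\le k$ bound genuinely propagates to the combinatorial data; getting the finiteness of permutations with few inversions to be independent of the (unbounded) strand counts is the crux, and it works precisely because an inversion-poor permutation is supported on few strands.
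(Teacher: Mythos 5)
There is a genuine gap, and it sits exactly where you yourself flagged the crux. The finiteness claim in your Steps 2--3 is false as stated: a permutation of $N$ strands with at most $k$ inversions is indeed supported on at most $2k$ positions, but those positions can sit anywhere among the $N$ strands, so the number of such permutations grows with $N=\omega(e)$ rather than being bounded by $(2k)^{2k}$. Already for a single inversion there are $N-1$ distinct adjacent transpositions. Concretely, a crossing between strands $i$ and $i+1$ of a band and a crossing between strands $j$ and $j+1$ of the same band are different configurations and a priori non-isotopic multicurves for each of the $\omega(e)-1$ choices of $i$; your combinatorial data must distinguish all of these, so Step 3 cannot produce a $K$ depending only on $\hat\tau$ and $k$.

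What is missing is a localization statement: one must show that, \emph{up to isotopy of $\gamma$}, the crossings can be pushed into a set of positions whose cardinality depends only on $\hat\tau$ and $k$. In the paper this is the ``normal form'' (Claim 2 of the proof): crossings are slid, tile by tile, until every cluster of crossings abuts one of the finitely many exceptional vertical segments located at the switches, after which all crossings live in boundedly many tiles. The termination of this sliding process is a genuine topological input, not bookkeeping: if a cluster could be slid forever it would sweep out an embedded annulus of tiles containing an entire component of $\gamma$, forcing that component to be a nonprimitive power and contradicting the definition of a multicurve. Your Step 2 inequality (inversions $\le$ self-intersections) is fine in spirit, but it bounds the \emph{amount} of crossing data, not the number of \emph{places} where it can occur, and it is the latter that must be controlled for the argument to close. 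Without the localization step (or some substitute criterion identifying when crossing configurations at different strand positions are isotopic), the proposal does not prove the proposition.
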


The proof of Proposition \ref{prop-cars} is pretty long -- we suggest that the reader skips over it in a first reading.

\begin{proof}
The basic strategy of the proof is to show that each multicurve $\gamma$ carried by $\hat\tau=(\hat\tau,\tau,\phi:\hat\tau\looparrowright\Sigma)$, with $\iota(\gamma,\gamma)= k$ and with $\omega_\gamma=\omega$, can be isotoped into ``normal form", and that the number of curves in the said normal form is bounded in terms of the radalla $\hat\tau$ and the bound $k$ on the number of self-intersections.

To begin with we choose a thickening $X=(X,\iota,\bar\phi)$ of $\hat\tau$. We will draw a pattern, of which we think of as millimeter paper, on a subset $\bar X$ of $X$. Before describing the construction in words, we refer the reader to figure \ref{fig2} and suggest to keep this drawing in mind while reading the following lines. Well, starting with the construction denote by $B(e)$ the band of $X$ corresponding to the edge $e$ of $\hat\tau$. Draw $\omega_e$ disjoint segments on $B(e)$ that are transversal to the vertical foliation of $B(e)$, joins the two vertical boundary components, and such that when we consider the union of all these segments over all bands we obtain a simple multicurve $\Omega$ in $X$ with $\omega_\Omega=\omega$. Now, we choose for each edge $e$ of $\hat\tau$ a finite set $\CL(e)$ consisting of at least $200k$ vertical leaves contained in the band $B(e)$ and we set $\CL=\cup\CL(e)$ -- abusing notation we will use $\CL$ (resp. $\CL(e)$) to refer both to the finite set of leaves and to the union of those leaves as a subset of $X$. The multicurve $\Omega$ and the set $\CL$ determine a tiling by closed squares of a subset $\bar X$ of $X$, namely the closure of the union of those squares in $X$ whose boundary consists of two subsegments of $\CL$ and two subsegments of $\Omega$. The boundary $\D\bar X$ consists of finitely many vertical and finitely many horizontal subsegments -- we refer to the vertical subsegments as the {\em exceptional vertical segments}. Note that the exceptional segments appear only at the vertices of $\hat\tau$ -- more concretely, if a vertex is such that $s$ edges of $\hat\tau$ merge into an edge, then we find there $s-1$ exceptional vertical segments.

\begin{figure}[h]
\includegraphics[width=8cm, height=4cm]{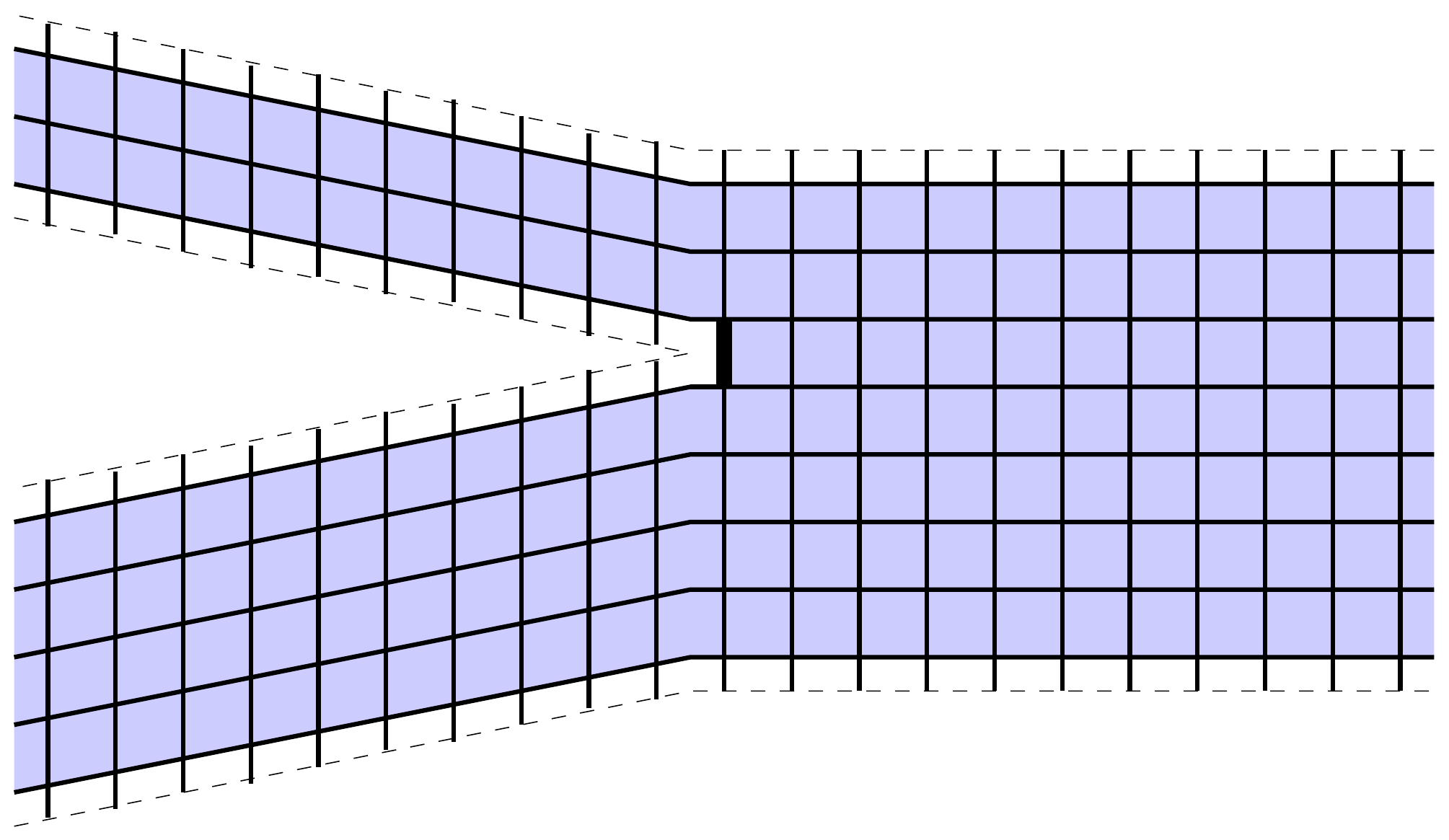}
\caption{The milllimeter paper pattern on $X$: the dotted lines are the boundary of $X$, the solid vertical segments are the leaves in $\CL$, the solid horizontal segments are the trace of the multicurve $\Omega$, the shaded region is $\bar X$, and finally the thick and short vertical segment is the exceptional segment corresponding to the vertex in the picture.}
\label{fig2}
\end{figure}

We will refer to the closed squares in $\bar X$ as {\em tiles}. For lack of a better word, if $Q$ is a tile we will refer to the two horizontal sides and to the two diagonals of $Q$ as being non-vertical. A multicurve $\gamma:\BS^1\sqcup\dots\sqcup\BS^1\to X$ is in {\em preliminary normal form} (cf. figure \ref{fig3}) if
\begin{itemize}
\item the set of self-intersections is discrete, 
\item the image is the union of non-vertical segments, and 
\item $\gamma$ is $C^0$-close to a smooth multicurve in $X$ which is transversal to the vertical foliation $\CF_X$.
\end{itemize}
Observe that every curve in preliminary normal form is carried by $\hat\tau$ and that by construction we have $\omega_\gamma(e)\le\omega(e)$ for every edge $e$ and every such multicurve $\gamma$. Note also that since we are only going to be interested in homotopy classes of multicurves, we will identify multicurves in preliminary normal form with the same image.
\medskip

\begin{figure}[h]
\includegraphics[width=8cm, height=4cm]{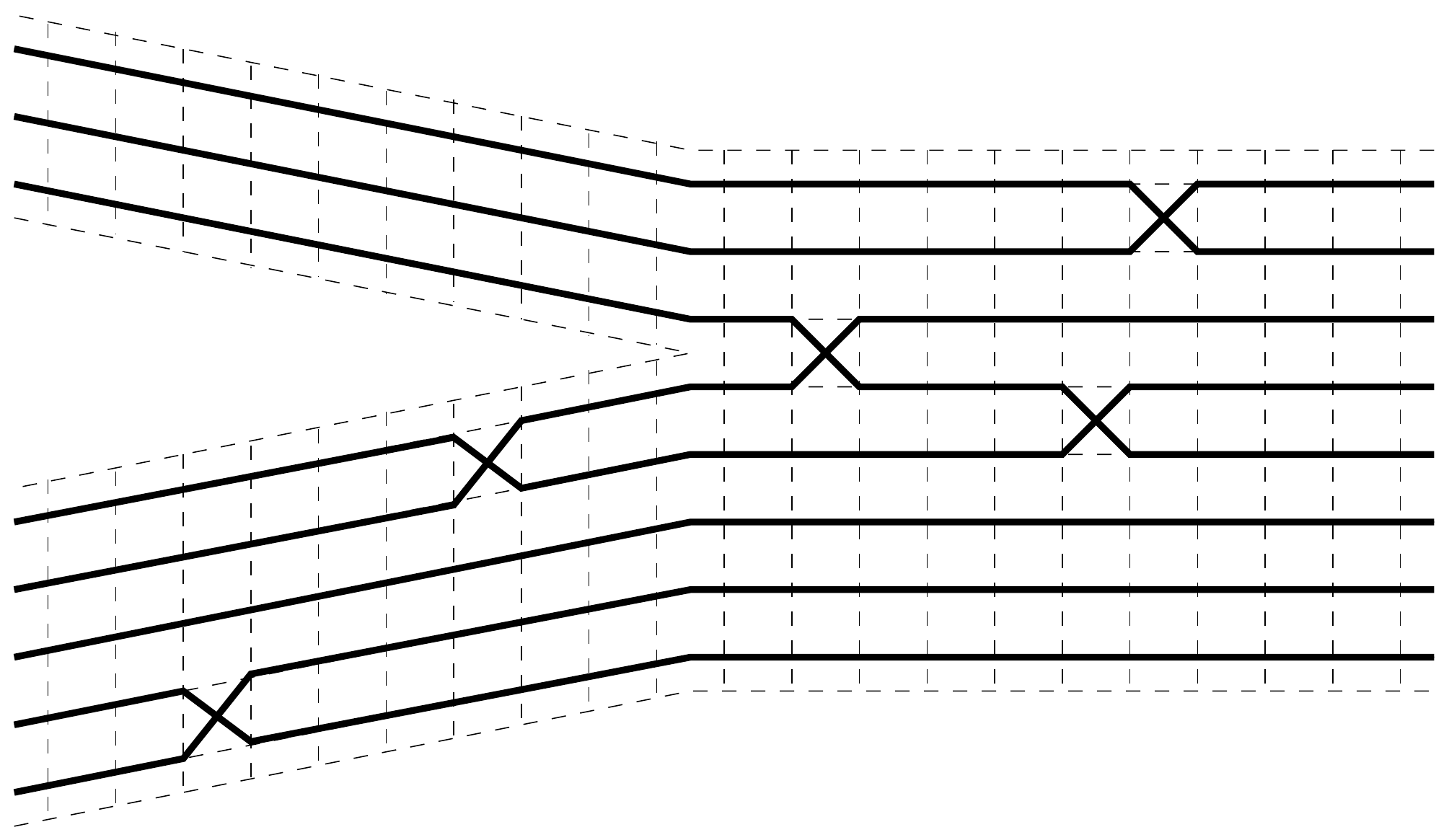}
\caption{A curve $\gamma$ in preliminary normal form with weight vector $\omega_\gamma=\omega$.}
\label{fig3}
\end{figure}

\noindent{\bf Claim 1.} Every multicurve $\gamma$ in $X$ transversal to the vertical foliation $\CF_X$, with $\omega_\gamma=\omega$, and with $\iota_X(\gamma,\gamma)\le k$ is isotopic (transversally to $\CF_X$) to a multicurve $\gamma'$ in preliminary normal form and with $\iota_X(\gamma',\gamma')=\iota_X(\gamma,\gamma)$.

\begin{proof}[Proof of Claim 1.]
We can perturb $\gamma$ so that, while keeping all its listed properties, we also have that $\gamma$ is contained in the tiled part $\bar X$ of $X$, no crossing of $\gamma$ lies on $\CL$, and there is at most a single crossing between any two consecutive leaves of $\CL$. Here we say that two leaves $\ell,\ell'$ of $\CL$ are consecutive if there is a square whose vertical boundary is contained in $\ell\cup\ell'$.

Note that all of this means that each $\ell\in\CL(e)$ meets $\gamma$ in $\omega_\gamma(e)=\omega(e)$ points. We can thus isotope $\gamma$ so that, while keeping all its properties so far, it meets (for every $e$) each leave $\ell\in\CL(e)$ in the points $\ell\cap\Omega$, i.e. in the vertices of the tiling of $\bar X$. Now, the image of $\gamma$ consists of segments $I$ which join points in $\CL\cap\Omega$ and whose interiors are disjoint from $\CL$. Moreover, the two endpoints of any such segment $I$ are contained in some tile. Replacing each segment $I$ by the corresponding straight segment in the tile we obtain a curve in preliminary normal form, as we wanted to construct.
\end{proof}

Having brought multicurves in preliminary normal form is not enough to prove Proposition \ref{prop-cars} because the number of multicurves in such form depends not only on the radalla and on the number $k$ of allowed crossings, but also on the entries of the vector $\omega$. To avoid this dependence, we are going to associate a complexity $\kappa(\gamma)$ to every multicurve $\gamma$ in preliminary normal form and with $\iota_X(\gamma,\gamma)\le k$. Note that each such curve $\gamma$ determines a collection $\CC_\gamma$ of $\iota_X(\gamma,\gamma)$ closed tiles, namely the tiles containing a crossing -- let $\vert\CC_\gamma\vert$ be the support of $\CC_\gamma$, i.e. the union of the closed tiles therein.

We set the complexity of $\gamma$ to be
\begin{align*}
\kappa(\gamma)=&\hbox{the number of tiles in $\CC_\gamma$ contained in those connected components}\\
&\hbox{of $\vert\CC_\gamma\vert$ which do NOT contain an exceptional vertical segment.}
\end{align*}
We will say that a multicurve $\gamma$ is in {\em normal form} if it is in preliminary normal form and has vanishing complexity $\kappa(\gamma)=0$. We are going to prove:
\smallskip

\noindent{\bf Claim 2.} Every multicurve $\gamma$ in $X$ transversal to the vertical foliation $\CF_X$, with $\omega_\gamma=\omega$, and with $\iota_X(\gamma,\gamma)\le k$ is isotopic (transversally to $\CF_X$) to a multicurve $\gamma'$ in normal form, and with $\iota_X(\gamma',\gamma')=\iota_X(\gamma,\gamma)$.
\smallskip

Assuming Claim 2 for a moment, we conclude the proof of Proposition \ref{prop-cars}. Any multicurve $\lambda$ carried by $\hat\tau$ lifts to a multicurve $\bar\lambda\subset X$, meaning that $\bar\phi(\bar\lambda)$ isotopic to $\lambda$. We know moreover that 
$$\iota_X(\bar\lambda,\bar\lambda)\le\iota(\lambda,\lambda)= k$$
Now, from Claim 2 we get that $\bar\lambda$ is isotopic to a multicurve $\gamma\subset X$ in normal form and with $\iota_X(\gamma,\gamma)\le k$. The claim of Proposition \ref{prop-cars} will follow once we prove that the number of choices for $\gamma$ is bounded just in terms of $k$ and the radalla $\hat\tau$. To see that this is the case endow the quadrangulated part $\bar X$ with a metric which makes each tile isometric to the euclidean square of diameter $1$. Now, if $\gamma$ is a curve in preliminary normal form, with $\iota_X(\gamma,\gamma)\le k$ we have that each connected component of $\vert\CC_\gamma\vert$ has at most diameter $k$. If $\gamma$ is in normal form then each component of $\vert\CC_\gamma\vert$ contains an exceptional vertical segment, meaning that all the crossings of $\gamma$ are located in one of the tiles within distance $k$ of one of these exceptional vertical segments. Since the number of exceptional vertical segments just depends on the radalla $\hat\tau$ we obtain that all the crossings of $\gamma$ are in a set of tiles whose cardinality just depends on $\hat\tau$ and $k$, as we needed to show.

It remains to prove Claim 2.

\begin{proof}[Proof of Claim 2]
By Claim 1 we know that $\gamma$ is isotopic to a multicurve $\gamma^{(0)}\subset X$ in preliminary normal form and with $\iota_X(\gamma^{(0)},\gamma^{(0)})=\iota_X(\gamma,\gamma)$. Among all choices for $\gamma^{(0)}$, consider those with minimal complexity $\kappa(\gamma^{(0)})$, and among those with minimal complexity, suppose that the number of connected components of $\vert\CC_{\gamma^{(0)}}\vert$ is also minimal. We will show that $\kappa(\gamma^{(0)})=0$, meaning that $\gamma^{(0)}$ is in normal form. Seeking a contradiction suppose that there is a component $\vert\CC^*_{\gamma^{(0)}}\vert$ of $\vert\CC_{\gamma^{(0)}}\vert$ which does not contain an exceptional vertical segment. Choose one of the tiles $T$ forming $\vert\CC^*_{\gamma^{(0)}}\vert$ and orient it transversally to the vertical foliation but otherwise arbitrarily -- for the sake of concreteness we will refer to the positively oriented side as ``left". Note that this orientation of $T$ induces an orientation of each other tile in the connected component $\vert\CC^*_{\gamma^{(0)}}\vert$. We can now isotope the curve $\gamma^{(0)}$ to a curve $\gamma^{(1)}$ by shifting the crossings in $\vert\CC^*_{\gamma^{(0)}}\vert$ to the left. More precisely $\gamma^{(1)}$ is the curve in preliminary normal form with the same number of crossings as $\gamma^{(0)}$, such that each crossing of $\gamma^{(0)}$ which does not lie in $\vert\CC^*_{\gamma^{(0)}}\vert$ is still a crossing of $\gamma^{(1)}$, and finally such that each crossing of $\gamma^{(0)}$ contained in $\vert\CC^*_{\gamma^{(0)}}\vert$ has been replaced by a crossing on the tile to the left -- compare with figure \ref{fig4}. 

\begin{figure}[h]
\includegraphics[width=10cm, height=2.5cm]{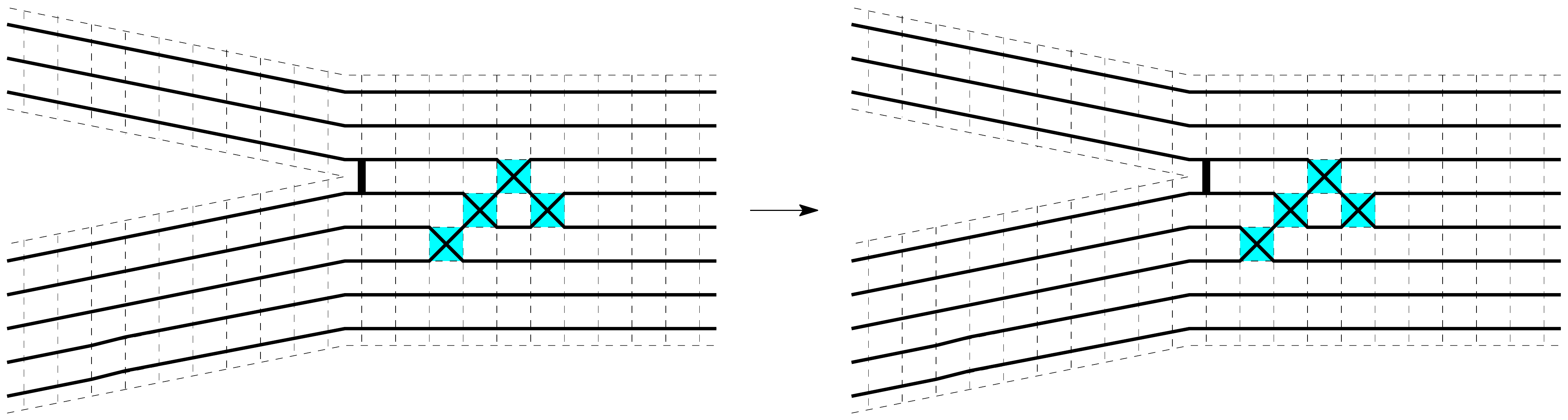}
\caption{The process by which $\gamma^{(1)}$ is obtained from $\gamma^{(0)}$.}
\label{fig4}
\end{figure}

The sets of tiles $\CC_{\gamma^{(1)}}$ and $\CC_{\gamma^{(0)}}$ containing the crossings of $\gamma^{(1)}$ and $\gamma^{(0)}$  are identical, besides the fact that the set of tiles $\CC^*_{\gamma^{(0)}}$ has been shifted to the left - denote the new set by $\CC^*_{\gamma^{(1)}}$. If the set $\vert\CC^*_{\gamma^{(1)}}\vert$ does not touch neither another component of $\vert\CC_{\gamma^{(1)}}\vert$, nor contains a exceptional vertical segment, then we can repeat this process and obtain curves $\gamma^{(2)}$ and so on. For instance, in the example presented in figure \ref{fig4} one can repeat this process 4 times but no more -- compare with figure \ref{fig5}.

\begin{figure}[h]
\includegraphics[width=12cm, height=2cm]{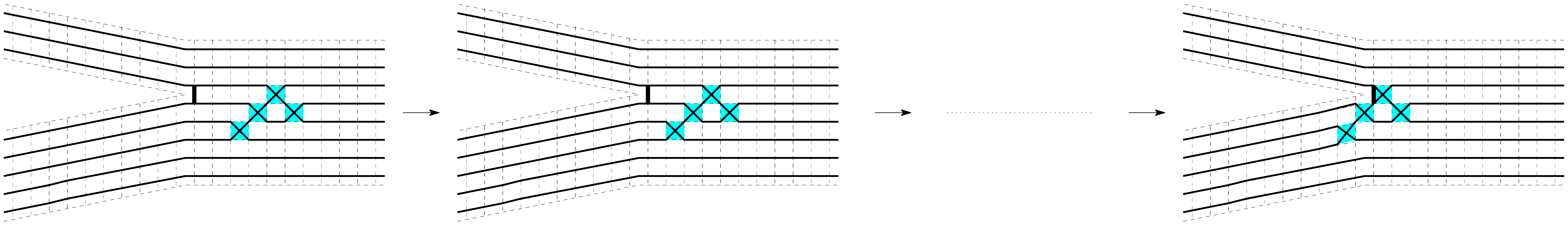}
\caption{The shaded tiles can be moved 4 times to the left but no more because the cloud corresponding to $\gamma^{(4)}$ contains an exceptional vertical segment.}
\label{fig5}
\end{figure}

Note that the minimality assumptions on our curve $\gamma^{(0)}$ imply that the process in question can be repeated infinitely often. This implies that there is some $s$ such that $\CC^*_{\gamma^{(s)}}=\CC^*_{\gamma^{(0)}}$ because there are only finitely many tiles and hence only finitely many configurations of tiles. But then, this implies that $\bar X$ contains a closed annulus
$$A=\cup_{i=0}^s\vert\CC^*_{\gamma^{(i)}}\vert$$
made out of tiles and such that the only (closed) tiles in $\vert\CC_{\gamma^{(0)}}\vert$ which intersect $A$ are those in $\CC^*_{\gamma^{(0)}}$. This implies that every component of $\gamma^{(0)}$ which meets $A$ is contained therein. Since we are assuming that $\gamma^{(0)}$ realizes $\iota_X(\gamma^{(0)},\gamma^{(0)})=\iota_X(\gamma,\gamma)$ and since there are crossings in $A$, it follows that some component of $\gamma^{(0)}\cap A$ represents a multiple of the soul of $A$, contradicting the assumption that $\gamma^{(0)}$ is isotopic to the multicurve $\gamma$, and hence that each of its components is  primitive in $\pi_1$. This proves Claim 2.
\end{proof}

Having proved Claim 2, we have also proved Proposition \ref{prop-cars}.
\end{proof}

Continuing with the same notation as in the proof of Proposition \ref{prop-cars}, note that the number $s(\hat\tau,\omega,k)$ of isotopy classes of multicurves $\gamma$ carried by the radalla $\hat\tau=(\hat\tau,\tau,\phi:\hat\tau\looparrowright\Sigma)$, with $\omega_\gamma=\omega$ and with $\iota(\gamma,\gamma)=k$ can be algorithmically computed. In fact, when $k$ is small the quantity $s(\hat\tau,\omega,k)$ does not depend on $\omega$ as long as $\omega(e)$ is bounded from below by some threshold to ensure that all possibilities can be realized.

Suppose for the sake of concreteness that the radalla $\hat\tau$ is actually a trivalent train-track (meaning that all vertices have degree $3$ and that $\hat\tau=\tau$) and that $\omega\in\BN_+^{E(\hat\tau)}$ is such that the weight of each edge is relatively large, say $\omega(e)\ge 10$ for all $e\in E(\hat\tau)$. Then, since simple multicurves in train-tracks are determined by the associated weights we have
$$s(\hat\tau,\omega,0)=1$$
Things are more complicated if we allow for intersections, and in fact we have
$$s(\hat\tau,\omega,1)=\frac V2$$
where $V$ is the number of vertices of the train-track $\tau=\hat\tau$. Indeed, if a multicurve $\gamma$ carried by $\hat\tau$ is in normal form and satisfies $\omega_\gamma=\omega$ and $\iota(\gamma,\gamma)=1$, then the unique crossing of $\gamma$ has to be in one of the $V$ tiles adjacent to an exceptional segment. Conversely, for each one of these tiles we get one such curve, meaning that there are exactly $V$ curves $\gamma\prec\hat\tau$ in normal form, with $\iota(\gamma,\gamma)=1$ and $\omega_\gamma=\omega$. On the other hand, $s(\hat\tau,\omega,1)=\frac V2$ because each multicurve $\gamma$ in normal form with $\omega_\gamma=\omega$ and $\iota(\gamma,\gamma)=1$ is isotopic to precisely another one such multicurve $\gamma'$ obtained as follows: let $T_0$ be the tile containing the crossing of $\gamma_0=\gamma$, let $T_1$ be the tile adjacent to $T_0$ and opposite to the exceptional segment contained in $T_0$, and let $\gamma_1$ be the multicurve in preliminary normal form, with $\omega_{\gamma_1}=\omega$ and with a single crossing in $T_1$. Then let $T_2$ be the tile adjacent to $T_1$ and opposite to $T_0$ and let $\gamma_2$ be the multicurve in preliminary normal form with $\omega_{\gamma_2}=\omega$ and with a single crossing in $T_2$. Define inductively $T_3,T_4,\dots$ pushing the tile away from the exceptional segment and let $\gamma_3,\gamma_4,\dots$ be the corresponding curves. This process has to end with $\gamma'=\gamma_k$ when $T_k$ is adjacent to a second exceptional segment. Note that $\gamma'$ is in normal form, that $\gamma'\neq\gamma$, and that $\gamma$ and $\gamma'$ are isotopic.

A similar computation can be done if one counts multicurves with $2$ self-intersections -- one obtains that
$$s(\hat\tau,\omega,2)=\frac{V(V+6)}8$$
Leaving the details of the computation of $s(\hat\tau,\omega,2)$ to the reader, we just sketch a possible approach. We think of the two crossings as railroad cars, assign to each one of them a weight and a direction and let them run around subject to the condition that if they touch each other then they get stuck together and travel in the direction of the heavier one. The argument in the proof of Claim 2 shows that at some point they both have to get stranded at an exceptional segment (cf. with figure \ref{fig5}). In this way we associate, after choosing weights and directions, to each multicurve in preliminary normal form a multicurve in normal form satisfying additional condition on the directions and weight of the cars. There are $V(V+6)$ such say "labeled normal forms" and 8 possible distributions of labels, which implies that $s(\hat\tau,\omega,2)=\frac{V(V+6)}8$, as we claimed.

\begin{bem}
The threshold $\omega(e)\ge 10$ for every edge is very generous -- in fact, for the two treated cases $k=1,2$, it would have sufficed to require that $\omega(e)\ge k+2$ for every edge $e$.
\end{bem}


\section{}\label{sec-radallas2}

With the same notation as all along let
\begin{equation}\label{eqdefset}
\CS_k=\{\gamma\text{ multicurve in }\Sigma\text{ with }\iota(\gamma,\gamma)=k\}
\end{equation}
be the set of all multicurves in $\Sigma$ with $k$ self-intersections. In this section we will show that there is a finite collection of radallas that carry all but finitely many elements in $\CS_k$. In the absence of cusps most element in $\CS_k$ are in fact carried by maximal train-tracks -- in the presence of cusps they are carried by radallas whose associated train-track is maximal and has finitely many extra edges around the punctures. Below we will see that these facts together with Proposition \ref{prop-cars} prove the polynomial growth of the number of elements in $\CS_k$ of length $\ell_\Sigma\le L$. We will also describe how one can associate a simple multicurve to each generic (in a precise sense) multicurve with self-intersections.

\subsection{Finding radallas}
The basic idea used to prove that there is a finite collection of radallas that carry all but finitely many elements in $\CS_k$ is to consider the possible Hausdorff limits of sequences of such multicurves and find radallas that carry their limits. 

We start by extending some basic facts about simple multicurves to the setting of multicurves with self-intersections.   It is well-known that the set of simple closed geodesics on $\Sigma$ are contained in a compact subset of $\Sigma$. A direct generalization of this argument also shows the following lemma that we state here for further reference:

\begin{lem}\label{compact}
For every $k$ there exists a compact set $K\subset \Sigma$ such that every geodesic multicurve $\gamma\subset \Sigma$ with at most $k$ self-intersections is contained in $K$.\qed
\end{lem}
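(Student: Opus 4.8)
The plan is to imitate the classical argument showing that the simple closed geodesics of a finite-area hyperbolic surface all lie in a fixed compact set: a geodesic that penetrates deep into a cusp must wind many times around that cusp, and winding around a cusp forces self-intersections whose number grows with the depth of penetration. So bounding $\iota(\gamma,\gamma)$ by $k$ bounds the depth, and the compact set can be taken to be the complement of sufficiently small cusp neighborhoods.

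First I would fix, for each of the $r$ cusps of $\Sigma$, an embedded horoball neighborhood, taking the $r$ of them pairwise disjoint. After normalizing coordinates the neighborhood of the $i$-th cusp is isometric to the cylinder $C_i=\{z\in\BH^2:\mathrm{Im}\,z>t_i\}/\langle z\mapsto z+1\rangle$ for a constant $t_i=t_i(\Sigma)>0$; I call the imaginary part of (a representative of) a point of $C_i$ its \emph{height}. For a geodesic multicurve $\gamma$ let $h(\gamma)$ be the largest height attained by $\gamma$ in any of the cylinders $C_i$ (with $h(\gamma)=0$ if $\gamma$ misses all of them); this maximum is attained since $\gamma$ is compact. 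It suffices to bound $h(\gamma)$ in terms of $k$, for then $K=\Sigma\setminus\bigcup_i\{\,p\in C_i:\text{height of }p>H\,\}$ is compact and contains every geodesic multicurve with at most $k$ self-intersections, for any $H$ larger than that bound.

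Suppose then that a component $\gamma_0$ of $\gamma$ attains height $h$ at a point $p$ of some $C_i$. A lift of $\gamma_0$ through a point over $p$ at height $h$ is a complete geodesic of $\BH^2$; since $\gamma_0$ is a closed geodesic this lift cannot be a vertical line, so it is a Euclidean semicircle, and since its top is at height $h$ while it never exceeds height $h$ its radius equals exactly $h$ — after a horizontal translation I center it at $0$. The portion of its image inside $C_i$ is the arc over the interval $(-\sqrt{h^2-t_i^2},\sqrt{h^2-t_i^2})$ of the real axis, and an elementary computation shows that, projected to $C_i$, this arc crosses itself transversally precisely over the nonzero half-integers in that interval (the two branches through the crossing over $m/2$ being the two points of the semicircle with $x$-coordinate $\pm m/2$). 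These crossings occur at pairwise distinct heights $\sqrt{h^2-(m/2)^2}$, hence are distinct points of $\Sigma$; since $C_i$ is embedded they are honest transverse double points of $\gamma$, and since $\gamma$ is a geodesic multicurve — hence in minimal self-position — each of them contributes to $\iota(\gamma,\gamma)$. Thus $\iota(\gamma,\gamma)$ is at least the number of positive half-integers less than $\sqrt{h^2-t_i^2}$, a quantity that tends to infinity with $h$ for fixed $t_i$. Therefore $\iota(\gamma,\gamma)\le k$ forces $h(\gamma)\le H_0$ for a bound $H_0=H_0(\Sigma,k)$, which completes the proof.

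The only point requiring care is the bookkeeping at the end: one must be sure the crossings produced by the winding inside one embedded cusp cylinder really are distinct transverse self-intersections of $\gamma$ in $\Sigma$, which is exactly why the chosen horoball neighborhoods are taken embedded (and, for cleanliness, pairwise disjoint, although the estimate inside a single cusp already does the job). Everything else is the plane geometry of semicircles together with the standard fact that a geodesic multicurve realizes its geometric self-intersection number.
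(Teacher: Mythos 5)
Your proof is correct and is precisely the argument the paper has in mind: the authors omit the proof, describing it as "a direct generalization" of the classical compactness argument for simple closed geodesics, and your quantitative version (a closed geodesic reaching height $h$ in an embedded cusp cylinder lifts to a semicircle of radius $h$ whose projection acquires roughly $2\sqrt{h^2-t_i^2}$ transverse double points at distinct heights) is exactly that generalization. The bookkeeping you flag — distinctness of the crossings via their distinct heights and the embeddedness of the horoball neighborhoods — is handled correctly.
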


Note that Lemma \ref{compact} implies that any sequence $(\gamma_n)$ of multicurves in $\Sigma$ with $\iota(\gamma,\gamma)= k$ has a Hausdorff convergent subsequence.
\medskip

Recall that a lamination is a compact subset of $\Sigma$ which is foliated by simple geodesics, and recall that the Hausdorff limits of sequences of simple multicurves are laminations (see \cite{Casson-Bleiler} for basic facts and definitions about laminations). Similarly, the Hausdorff limit $\lambda$ of a sequence $(\gamma_n)$ of multicurves with self-intersections is a union of geodesics but, naturally, they can intersect. However, if the multicurves $\gamma_n$ have a bounded number of intersections, then the non-simple leaves in $\lambda$ are finite and isolated in the following sense:  

\begin{lem}\label{lamination}
Given $k$, let $(\gamma_n)$ be a sequence in $\CS_k$, and suppose that it converges to some $\lambda\subset \Sigma$ in the Hausdorff topology as $n\to\infty$. Then $\lambda=\lambda_0\cup \mathcal{A}$ where $\lambda_0$ is a lamination and $\mathcal{A}$ is the union of finitely many geodesics $g_1,\dots,g_r$ such that for each $i$ there exists $j$, possibly $j=i$, such that $g_i$ and $g_j$ intersect transversely. 
\end{lem}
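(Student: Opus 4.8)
Here is my plan for proving Lemma~\ref{lamination}.

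\medskip

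\textbf{Setup and strategy.} The plan is to decompose the Hausdorff limit $\lambda$ into its ``simple part'' and its ``non-simple part'' and to argue that the latter is finite using the bound $k$ on self-intersections together with the fact that geodesics which are not asymptotic and cross must do so at definite angle/separation. First I would recall that $\lambda$, being a Hausdorff limit of geodesic multicurves, is a closed set which is a union of complete geodesics (this is the standard fact that limits of geodesics are geodesics, applied leafwise). Let $\mathcal{A}\subset\lambda$ be the union of all leaves $g$ of $\lambda$ such that $g$ has a transverse self-intersection or crosses another leaf of $\lambda$ transversely; and let $\lambda_0$ be the closure of the union of the remaining leaves. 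By construction $\lambda_0$ is a closed set foliated by simple, pairwise non-crossing geodesics, hence a lamination (one should check $\lambda_0$ is still closed: a limit of simple non-crossing leaves is simple and non-crossing, so no bad leaf can appear in the closure). The whole content is then to show $\mathcal{A}$ is a union of \emph{finitely many} geodesics and that each bad leaf crosses some bad leaf (the latter being essentially the definition of $\mathcal{A}$, so the real point is finiteness).

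\medskip

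\textbf{Key steps for finiteness.} The main step is a counting/compactness argument. Fix a transverse self- or mutual intersection point $p$ of leaves of $\mathcal{A}$, occurring at angle $\theta_p>0$. By Lemma~\ref{compact} everything lives in a fixed compact $K\subset\Sigma$, so there is a lower bound on the injectivity radius along all these geodesics. Given a crossing of two leaves of $\lambda$ at a point $p$ with angle $\theta_p$, I would argue that for $n$ large the approximating multicurve $\gamma_n$ must have a genuine self-intersection in a small ball around $p$ at angle close to $\theta_p$: the two leaves through $p$ are Hausdorff-approximated by arcs of $\gamma_n$, and two arcs crossing transversally at definite angle near $p$ cannot be perturbed apart. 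Moreover, distinct crossing points $p_1,\dots,p_m$ of $\lambda$ at mutual distance $\geq\epsilon_0$ give rise, for $n$ large, to $m$ distinct self-intersections of $\gamma_n$. Since $\iota(\gamma_n,\gamma_n)=k$, we conclude there are at most $k$ ``crossing points'' in $\lambda$ separated by a definite distance; a covering argument on the compact set $K$ then bounds the total number of crossing points of $\lambda$ by a constant depending only on $k$ — in fact one wants to conclude there are at most $k$ of them outright, by noting that any two distinct transverse crossings of geodesics are automatically separated (two geodesics cross in at most one point, and a leaf can cross only boundedly many others). Finally, a geodesic leaf of $\lambda$ that is involved in at least one crossing is determined, together with its continuation, by passing through one of these finitely many crossing points in one of finitely many directions; so $\mathcal{A}$ consists of finitely many geodesics $g_1,\dots,g_r$.

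\medskip

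\textbf{The last clause and the main obstacle.} For the statement ``for each $i$ there exists $j$, possibly $j=i$, with $g_i,g_j$ crossing transversely'': this holds because we defined $\mathcal{A}$ to consist precisely of leaves having a transverse crossing with a leaf of $\lambda$; one just needs that the other leaf in such a crossing is itself in $\mathcal{A}$ (it is, since it crosses $g_i$) and is one of the finitely many $g_j$. The step I expect to be the main obstacle is making rigorous the claim that distinct transverse crossings of the limit $\lambda$ force distinct self-intersections of $\gamma_n$ for large $n$ — i.e., that crossings cannot ``collide'' or ``cancel'' in the limit, and that a transverse crossing of the limit is not created out of thin air from near-tangencies of $\gamma_n$. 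This requires a careful local analysis: near a transverse crossing point $p$ of $\lambda$ at angle $\theta_p$, choose a small bigon-free chart; the leaves of $\gamma_n$ Hausdorff-close to the two leaves of $\lambda$ through $p$ enter and exit this chart through four definite ``sides'', and by a Jordan-curve / intermediate-value argument such a pair of arcs must cross inside. Controlling that the relevant arcs of $\gamma_n$ are honestly close to the limit leaves (and not, say, a far-away strand of $\gamma_n$ that happens to pass nearby) is where one uses that $\gamma_n\to\lambda$ in the Hausdorff topology together with the uniform local structure of geodesics in $K$. I would isolate this as a local lemma and then assemble the global count from it.
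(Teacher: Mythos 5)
Your proposal is correct and follows essentially the same route as the paper: decompose $\lambda$ into the union $\mathcal{A}$ of leaves through transverse crossing points and its complement, bound the number of such crossing points by $k$ by showing each forces a genuine self-intersection of $\gamma_n$ for large $n$ (the paper delegates this to the standard Casson--Bleiler argument that you spell out as your local lemma), and conclude that the remaining leaves form a lamination. The only cosmetic difference is that you take $\lambda_0$ to be the closure of the non-singular leaves while the paper takes $\lambda\setminus\mathcal{A}$ and shows it is closed; both work.
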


\begin{proof}
As we mentioned above, the Hausdorff limit $\lambda$ is a union of (images of) geodesics. We call a point $x\in\lambda$ singular if there exist geodesics $g, \bar{g}:\BR\to\Sigma$ parametrized by arc length with $g(\BR),\bar g(\BR)\subset\lambda$ and such that $g(0)=\bar{g}(0)=x$ but $\bar{g}'(0)\neq\pm g'(0)$. Say that a geodesic with image in $\lambda$ is singular if it goes through a singular point and note that the same argument used to prove that the closure of a set of simple disjoint geodesics is a lamination (see Lemma 3.2 in \cite{Casson-Bleiler}) shows that 
\begin{itemize}
\item any geodesic with image in $\lambda$ and sufficiently close to a singular geodesic is singular as well,
\item $\lambda$ has at most $k$ singular points and hence, up to reparametrization, at most $2k$ singular geodesics, and
\item the union $\CA$ of the images of the singular geodesics is open in its closure.
\end{itemize}
The last point implies that $\lambda_0=\lambda\setminus\CA$ is closed and hence compact. Since by construction $\lambda_0$ does not contain any singular points, it is a lamination, as we needed to prove.
\end{proof}

Equipped with this lemma we can construct radallas that carry all but finitely many multicurves with bounded number of self-intersections. Moreover, we can do it in such a way that after fixing some arbitrary $\epsilon>0$, each radalla is $\epsilon$-geodesic and only finitely many radallas are needed. The basic idea is to construct, for each Hausdorff limit $\lambda=\lambda_0\cup\mathcal{A}$ as in Lemma \ref{lamination}, a train-track carrying the lamination $\lambda_0$ in the usual way, and then add an edge for each of the finite leaves in $\mathcal{A}$. 

\begin{lem}\label{finite}
For any $k$ and $\epsilon>0$ there exists a finite collection of $\epsilon$-geodesic radallas $\hat{\tau}_1, \hat{\tau}_2, \ldots, \hat{\tau}_n$ such that for all but finitely many $\gamma\in\CS_k$ there is $i$ with $\gamma\prec\hat{\tau}_i$.
\end{lem}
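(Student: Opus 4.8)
The plan is to argue by compactness of the space of Hausdorff limits, using Lemma \ref{lamination} to decompose each limit and then thickening the limiting configuration into an $\epsilon$-geodesic radalla that carries a whole neighborhood (in the Hausdorff topology) of it. First I would argue by contradiction: suppose that for some $k$ and $\epsilon>0$ no finite collection works, so that one can extract an infinite sequence $(\gamma_n)$ in $\CS_k$, no two of which are carried by a common $\epsilon$-geodesic radalla from any finite sublist — more precisely, a sequence such that $\gamma_n$ is not carried by any $\epsilon$-geodesic radalla that also carries infinitely many of the $\gamma_m$. By Lemma \ref{compact} all the geodesic representatives lie in a fixed compact set $K$, so by the remark following Lemma \ref{compact} we may pass to a subsequence converging in the Hausdorff topology to some $\lambda\subset K$. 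By Lemma \ref{lamination} we may write $\lambda=\lambda_0\cup\CA$ with $\lambda_0$ a geodesic lamination and $\CA=g_1\cup\dots\cup g_r$ a finite union of geodesics, each of which meets some $g_j$ transversely.

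The second step is to build, from the limiting configuration $\lambda$, a single $\epsilon$-geodesic radalla $\hat\tau$ that carries every multicurve sufficiently Hausdorff-close to $\lambda$. For the lamination part this is the standard construction: $\lambda_0$ is carried by a train-track $\tau_0$ obtained by thickening $\lambda_0$ into a small fibered neighborhood, and after fixing $\epsilon$ one may take $\tau_0$ to be $\epsilon$-geodesic (isotope it so that its branches are nearly geodesic and long — the branches can be made as long as we like by subdividing and the curvature as small as we like by following leaves of $\lambda_0$). Since $\lambda_0$ is a geodesic lamination on a surface of finite type it is contained in finitely many maximal ones, but here we do not even need maximality; we just need {\em some} $\epsilon$-geodesic train-track $\tau_0$ with $\lambda_0\prec\tau_0$ and, crucially, carrying a Hausdorff-neighborhood of $\lambda_0$. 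To this $\tau_0$ we adjoin, for each finite leaf $g_i$ of $\CA$, an extra edge: $g_i$ enters and exits the neighborhood $N(\tau_0)$ along the horizontal boundary, i.e. it runs through complementary regions of $\tau_0$ with endpoints approaching cusps, and we add $\phi|_{e_i}$ as precisely such an edge of $\hat\tau\setminus\tau$ — discarding or identifying edges to satisfy the last two clauses of the definition of radalla (no edge homotopic into $\tau_0$, no two homotopic to each other). The resulting triple $(\hat\tau,\tau_0,\phi)$ is an $\epsilon$-geodesic radalla, possibly after shrinking $\epsilon$ slightly and re-isotoping, and by construction there is a Hausdorff-neighborhood $\CU$ of $\lambda$ such that every multicurve $\gamma\subset\Sigma$ with $\gamma\subset\CU$ is carried by $\hat\tau$: a multicurve close to $\lambda$ runs, outside of finitely many excursions, parallel to leaves of $\lambda_0$ (hence is carried by $\tau_0$ there) and, on its finitely many non-simple excursions, is isotopic to one of the added edges $e_i$.

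The third step closes the contradiction: for $n$ large the multicurves $\gamma_n$ lie in $\CU$, hence are all carried by the single radalla $\hat\tau$, contradicting the choice of the sequence. Since the only choices in the construction were the Hausdorff limit $\lambda$ and finitely many bounded data attached to it, and since the space of possible limits is (sequentially) compact, a standard extraction argument promotes ``each limit has a neighborhood carried by one radalla'' to ``finitely many radallas carry all but finitely many $\gamma\in\CS_k$'': cover the compact space of limits by the neighborhoods $\CU_\lambda$, extract a finite subcover $\CU_{\lambda_1},\dots,\CU_{\lambda_n}$, and note that only finitely many $\gamma\in\CS_k$ can fail to lie in $\bigcup_j\CU_{\lambda_j}$ — otherwise they would have a limit point outside the subcover.

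\textbf{The main obstacle} I anticipate is the second step: checking that the added diagonal edges genuinely satisfy the radalla axioms and that the radalla can simultaneously be made $\epsilon$-geodesic and carry an honest Hausdorff-neighborhood of $\lambda$. The delicate point, exactly as flagged in the introduction's discussion of cusps, is the behavior of the finite leaves $g_i$ near the punctures: a leaf $g_i$ may wind deep into a cusp before coming back, so the ``edge'' it defines in the complementary region is long and close to the puncture, and one must be careful that after truncating near the cusp the edge still has length $\ge 1/\epsilon$, curvature $\le\epsilon$, endpoints landing in genuine cusps of $\tau_0$, and is not homotopic into $\tau_0$ nor to another such edge. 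This is precisely why the train-track $\tau_0$ must be allowed finitely many extra edges wrapping around the punctures (as the paragraph introducing this section already announces), and arranging that these extra edges, together with the diagonals coming from $\CA$, form a legitimate finite radalla is where the real care is needed. Everything else — the compactness extraction, the polynomial-growth-style bookkeeping — is soft once the carrying statement for a neighborhood of a fixed $\lambda$ is in hand.
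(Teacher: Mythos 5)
Your proposal is correct and follows essentially the same route as the paper: compactness of $\overline{\CS_k}$ in the Hausdorff topology (via Lemma \ref{compact}), the decomposition $\lambda=\lambda_0\cup\CA$ from Lemma \ref{lamination}, construction of an $\epsilon$-geodesic radalla carrying a Hausdorff neighborhood of each limit $\lambda$ by thickening $\lambda_0$ into a train-track and adjoining the finite singular leaves as extra edges, and a finite subcover to conclude. The paper phrases the construction as collapsing a fibered neighborhood $\CN(\lambda)$ (with small balls excised around the singular points) rather than as your contradiction argument, but the content is the same, and the delicate points you flag about the cusps are exactly the ones the paper also treats only briefly.
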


Note that by definition the curves carried by an $\epsilon$-geodesic radalla are very long when $\epsilon$ is small -- this is because we assumed not only that the edges have small geodesic curvature but also that they are very long. In particular, once can think of the finite exceptional collection in Lemma \ref{finite} as the set of short curves in $\CS_k$.

\begin{proof}
Let $\overline{\CS_k}$ be the closure of $\CS_k$ in the set of all compact subsets of $\Sigma$ with respect to the Hausdorff topology. Lemma \ref{compact} implies that $\overline{\CS_k}$ is itself compact. Let $\lambda\in\overline{\CS_k}\setminus\CS_k$ be an accumulation point. Fixing an arbitrary $\epsilon>0$, we will construct an $\epsilon$-geodesic radalla $\hat{\tau}_{\lambda}$ that carries all $\gamma\in\CS_k$ that are sufficiently (Hausdorff) close to $\lambda$.  By Lemma \ref{lamination}, $\lambda=\lambda_0\cup \mathcal{A}$ where $\lambda_0$ is a lamination and $\mathcal{A}$ is the finite set of singular leaves. Let $\epsilon'>0$ be very small and take a regular $\epsilon'$-neighborhood of $\lambda$ and denote it by $\mathcal{N}(\lambda)$. For each of the finitely many singular points in $\mathcal{A}$ take a $2\epsilon'$-ball around it and let $B$ be the union of these balls. Then $\mathcal{N}(\lambda)\setminus B$ admits a foliation transversal to $\lambda$. Let $\gamma$ be a geodesic multicurve which is a distance less than $\epsilon'$ from $\lambda$. This curve can be isotoped to a curve which remains transverse to the foliation in  $\mathcal{N}(\lambda)\setminus B$ and which follows the leaves of $\mathcal{A}$ inside each ball in $B$. After isotoping each such curve in this manner, collapse $\mathcal{N}(\lambda)$ along the transverse foliation (in $\mathcal{N}(\lambda)\setminus B$) and to the leaves of $\mathcal{A}$ inside $B$. This results in a radalla $\hat{\tau}_{\lambda}$, where the associated train-track is the image of $\lambda_0$ under this collapse. By choosing $\epsilon'$ small enough, we can assume $\hat{\tau}_{\lambda}$ is $\epsilon$-geodesic. 

Note that, by construction, every multicurve in $\CS_k$ contained in some open neighborhood in $\overline{\CS_k}$ of $\lambda$ is carried by the $\epsilon$-geodesic radalla $\hat{\tau}_{\lambda}$. Compactness of $\overline{\CS_k}$ implies that finitely many such open sets cover a neighborhood of $\overline{\CS_k}\setminus\CS_k$ in $\overline{\CS_k}$. The claim follows.
\end{proof}

\subsection{Generic curves} 
Suppose $\Sigma$ has genus $g$ and $r$ punctures. As in \eqref{eqdefset}, let $\CS_k$ be the set of all multicurves in $\Sigma$ with $k$ self-intersections. 

\begin{defi*}
Let $\CZ\subset\CS_k$ be arbitrary. A subset $\CZ'\subset\CZ$ is \emph{negligible} if
\[ 
\lim_{L\to\infty}\frac{1}{L^{6g-6+2r}}|\{\gamma\in\CZ', \, \ell_\Sigma(\gamma)\leq L\}| = 0 
\]
The complement $\CZ\setminus\CZ'$ of a negligible set $\CZ'$ is said to be \emph{generic in $\CZ$}. 
\end{defi*}

If $\CZ'$ is generic in $\CZ$, and if the ambient set $\CZ$ is understood from the context, then we just say that $\CZ'$ is {\em generic}. 

\begin{bem}
Note that $\CZ'\subset\CZ$ could be negligible and generic at the same time. Note also that the image under a mapping class of a negligible set is also negligible. 
\end{bem}

In this section we prove that the set of all $\gamma\in\CS_k$ which {\em fill} an almost geodesic {\em maximal} radalla is generic. Here we say that a multicurve $\gamma$ carried by a radalla $\hat\tau$ {\em fills} if the corresponding vector of weights $\omega_\gamma$ is positive, meaning that $\omega_\gamma(e)>0$ for every $e\in E(\hat\tau)$. If $\gamma$ fills $\hat\tau$ we write $\gamma\fills\hat\tau$. A radalla $\hat{\tau}=(\hat{\tau}, \tau, \phi : \hat{\tau} \hookrightarrow \Sigma)$ is \emph{maximal} if $\phi(\tau)$ is a maximal recurrent train-track. Recall that a train-track $\tau$ is {\em recurrent} if it is filled by some multicurve. A recurrent train-track is maximal if it is not properly contained in any other recurrent train-track. 

\begin{bem}
If $\Sigma$ is not a once-punctured torus, then the complementary regions of a maximal train-track are just triangles and once-punctured monogons (see section \ref{sec-torus} for a discussion of the case of the once punctured torus). It follows that if $\Sigma$ is closed, then all complementary regions of a maximal recurrent train-track are triangles, which in turn implies that a maximal radalla is nothing but a maximal recurrent train-track. 
\end{bem}

We can now state precisely the main goal of this section:

\begin{prop}\label{generic}
For any $\epsilon>0$, the set 
\[
\CS_k^{\epsilon}= \{\gamma\in\CS_k\,|\, \gamma\fills\hat{\tau} \text{ for some maximal } \epsilon\text{-geodesic radalla } \hat{\tau} \}
\]
is a generic subset of $\CS_k$.
\end{prop}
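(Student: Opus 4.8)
The plan is to show that the complement $\CS_k \setminus \CS_k^\epsilon$ is negligible, by exhibiting it as a finite union of negligible pieces. First I would invoke Lemma \ref{finite} to fix a finite collection of $\epsilon$-geodesic radallas $\hat\tau_1,\dots,\hat\tau_n$ carrying all but finitely many $\gamma \in \CS_k$. Since a finite set is automatically negligible, it suffices to prove that for each $i$ the set of $\gamma \prec \hat\tau_i$ that do \emph{not} fill a maximal $\epsilon$-geodesic radalla is negligible. So fix one radalla $\hat\tau$ from the list. The key structural observation is that a multicurve $\gamma \prec \hat\tau$ which fails to lie in $\CS_k^\epsilon$ must, in a suitable sense, be ``degenerate'': either its weight vector $\omega_\gamma$ is not positive on all edges of $\hat\tau$ — so $\gamma$ is actually carried by a proper sub-radalla obtained by deleting the zero-weight edges — or $\hat\tau$ itself (or the relevant sub-radalla) is not maximal, in which case $\gamma$ is carried by a radalla whose underlying train-track lies inside a maximal recurrent one and $\gamma$ fails to fill it. In either case $\gamma$ is carried by a radalla $\hat\sigma$ whose underlying train-track $\sigma$ has \emph{strictly fewer edges} than a maximal recurrent train-track (equivalently, fewer than the maximal number $6g-6+3r$ of branches, the extra edges around cusps being bounded in number), because failing to fill a maximal radalla means all the carrying must happen on a non-maximal piece.

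Next I would run a dimension/counting estimate. For a fixed radalla $\hat\sigma = (\hat\sigma,\sigma,\phi)$, the integral points of the cone of weights satisfying the switch equations, cut off by $\ell_\Sigma \le L$, number at most $C \cdot L^{d}$ where $d$ is the dimension of that cone; and by Proposition \ref{prop-cars} each such integral weight vector is realized by at most $K = K(\hat\sigma,k)$ homotopy classes of multicurves with $\iota(\gamma,\gamma)=k$. Hence the number of $\gamma \prec \hat\sigma$ with $\iota(\gamma,\gamma)=k$ and $\ell_\Sigma(\gamma) \le L$ is $O(L^{d})$. Now the point is that when $\sigma$ is not a maximal recurrent train-track, the dimension $d$ of its weight cone is strictly less than $6g-6+2r$: a maximal recurrent train-track on $\Sigma$ carries a top-dimensional cone in $\CM\CL(\Sigma)$, which has dimension $6g-6+2r$, and any non-maximal recurrent train-track — in particular any with fewer branches — gives a cone of strictly smaller dimension. (The extra diagonal edges of a radalla do not increase $d$ beyond that of its underlying train-track once one passes to the sub-radalla on which $\gamma$ actually fills, since on the filled part the diagonal weights are determined up to bounded ambiguity by the train-track weights — this is exactly the content that powers Proposition \ref{prop-cars}.) Therefore each such piece contributes $O(L^{6g-6+2r-1}) = o(L^{6g-6+2r})$, so it is negligible; summing over the finitely many radallas $\hat\tau_i$ and the finitely many proper sub-radallas of each, plus the finite exceptional set, we conclude $\CS_k \setminus \CS_k^\epsilon$ is negligible.

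There is one gap in the above that needs care, and I expect it to be the main obstacle: a multicurve $\gamma \prec \hat\tau_i$ might fill $\hat\tau_i$ itself, yet $\hat\tau_i$ need not be maximal (Lemma \ref{finite} gives no maximality), so I cannot immediately conclude $\gamma \in \CS_k^\epsilon$. The resolution is that if $\hat\tau_i$ is $\epsilon$-geodesic but its underlying train-track $\tau_i$ is not maximal recurrent, then one can enlarge $\tau_i$ to a maximal recurrent train-track — and, in the punctured case, add the bounded collection of edges around cusps — obtaining a maximal $\epsilon'$-geodesic radalla $\hat\tau_i'$ with $\hat\tau_i \prec \hat\tau_i'$ (after shrinking $\epsilon'$, or rather by redoing the construction in Lemma \ref{finite} with the enlargement built in); but crucially a $\gamma$ filling $\hat\tau_i$ need \emph{not} fill the enlargement $\hat\tau_i'$. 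So the honest statement is: either $\gamma$ fills some maximal $\epsilon$-geodesic radalla (and lies in $\CS_k^\epsilon$), or it does not, and in the latter case the weight vector of $\gamma$, viewed on \emph{any} maximal $\epsilon$-geodesic radalla carrying it, lies on the boundary of the weight cone, hence in a subcone of dimension $\le 6g-6+2r-1$; and again there are only finitely many maximal $\epsilon$-geodesic radallas to consider (this finiteness is itself a consequence of Lemma \ref{finite}-type compactness, or can be arranged), each contributing finitely many boundary subcones, each counted with bounded multiplicity via Proposition \ref{prop-cars}. This reduces everything to the dimension count above. The slightly delicate bookkeeping is ensuring that ``does not fill a maximal $\epsilon$-geodesic radalla'' genuinely forces the weight vector onto a lower-dimensional set — which follows because the radallas carrying $\gamma$ form a directed-enough family that filling the largest one is the only way to have a top-dimensional weight.
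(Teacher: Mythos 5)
Your argument is correct and is essentially the paper's own proof: Lemma \ref{finite} gives the finite list of $\epsilon$-geodesic radallas, one passes to the finitely many sub-radallas so that every curve outside a finite set actually \emph{fills} something on the list, and the dimension count via Proposition \ref{prop-cars} (which the paper packages as Lemma \ref{poly-bound}: the weight cone of a non-maximal underlying train-track has dimension $<6g-6+2r$, and the extra edges carry weight at most $k$) shows that fillers of non-maximal radallas are negligible. The ``gap'' you flag in your last paragraph is not one: you never need to place a curve filling a non-maximal radalla inside $\CS_k^\epsilon$, since such curves already form a negligible set by your own count and genericity only requires the complement of $\CS_k^\epsilon$ to be negligible — and it is just as well, because your proposed fix presumes $\gamma$ is carried by \emph{some} maximal $\epsilon$-geodesic radalla, which need not hold.
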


Proposition \ref{generic} is going to follow easily from Lemma \ref{finite} once we determine ``how many" multicurves in $\CS_k$ are carried by each radalla: 

\begin{lem}\label{poly-bound}
Let $\Sigma$ be a hyperbolic surface of genus $g$ with $r$ punctures, let $(\hat\tau,\tau,\phi:\hat\tau\looparrowright\Sigma)$ be a radalla, and suppose that the underlying train-track $\tau$ is trivalent and is not properly contained in any other $\tau'\subset\hat\tau$ such that $(\hat\tau,\tau',\phi:\hat\tau\looparrowright\Sigma)$ is a radalla. Then we have
$$C=\limsup_{L\to\infty}\frac 1{L^{6g-6+2r}}\left\vert\left\{
\begin{array}{c}
\gamma\ \hbox{multicurve},\ \ \gamma\fills\hat\tau\\
\iota(\gamma,\gamma)= k,\ \ \ell_{\Sigma}(\gamma)\le L
\end{array}
\right\}\right\vert<\infty$$
for every $k\in\BN$. Moreover $C=0$ unless $\phi(\tau)$ is a maximal recurrent train-track.
\end{lem}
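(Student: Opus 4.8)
\textbf{Proof proposal for Lemma \ref{poly-bound}.}

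The plan is to count the multicurves $\gamma$ with $\gamma\fills\hat\tau$, $\iota(\gamma,\gamma)=k$ and $\ell_\Sigma(\gamma)\le L$ by organizing them according to their vector of weights $\omega_\gamma\in\BZ_+^{E(\hat\tau)}$. First I would invoke the geometry of an $\epsilon$-geodesic radalla: since the edges of $\hat\tau$ are almost geodesic and long, any curve carried by $\hat\tau$ with weight vector $\omega$ has length comparable to $\sum_e \omega(e)\,\ell_\Sigma(\phi(e))$, so up to a multiplicative constant depending only on $\hat\tau$ we have $\ell_\Sigma(\gamma)\asymp \|\omega_\gamma\|$. Hence the set being counted injects, via $\gamma\mapsto\omega_\gamma$, into the set of integral points $\omega$ of the polyhedral cone $\mathcal{P}_{\hat\tau}=\ker(W_{\hat\tau})\cap\BR_+^{E(\hat\tau)}$ with $\|\omega\|\lesssim L$, and by Proposition \ref{prop-cars} the fiber of this map over each $\omega$ has cardinality at most $K=K(\hat\tau,k)$. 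Therefore
\[
\left\vert\left\{\gamma\fills\hat\tau,\ \iota(\gamma,\gamma)=k,\ \ell_\Sigma(\gamma)\le L\right\}\right\vert
\ \le\ K\cdot\bigl\vert\{\omega\in\mathcal{P}_{\hat\tau}\cap\BZ^{E(\hat\tau)}:\|\omega\|\le cL\}\bigr\vert .
\]

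Next I would estimate the number of lattice points in $\mathcal{P}_{\hat\tau}\cap\{\|\omega\|\le cL\}$. This is a standard lattice-point count in a rational polyhedral cone: the number of integer points in $R\cdot(\mathcal{P}_{\hat\tau}\cap B)$ grows like $R^{d}$ where $d=\dim\mathcal{P}_{\hat\tau}$ is the dimension of the solution space of the switch equations of $\hat\tau$. So $C<\infty$ always holds, and more precisely the $\limsup$ is bounded by a constant times $L^{d-(6g-6+2r)}$ in the relevant normalization; to finish I need $d\le 6g-6+2r$, with equality only in the maximal recurrent case. Here I would use the standard fact from train-track theory: for a trivalent train-track $\tau$ on $\Sigma$, $\dim(\ker W_\tau)$ equals the number of branches minus the number of switches, which is maximized (equal to $6g-6+2r$) exactly when $\tau$ is maximal and recurrent; if $\tau$ is non-maximal the dimension drops by at least one, and the recurrence hypothesis on $\hat\tau$ (not contained in a larger radalla with the same immersion) together with the maximality of the train-track combine to pin down when the top dimension is attained. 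Adding diagonal edges to pass from $\tau$ to $\hat\tau$ does not increase this dimension, because each added edge $e\in E(\hat\tau)\setminus E(\tau)$ joins two cusps of the same complementary region and hence contributes a relation (its weight is determined by a linear combination of the boundary weights of that region) — so $\dim(\ker W_{\hat\tau})=\dim(\ker W_\tau)$. This gives $d\le 6g-6+2r$ with equality iff $\phi(\tau)$ is a maximal recurrent train-track; when $d<6g-6+2r$ the normalized count tends to $0$, i.e. $C=0$.

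The main obstacle I anticipate is not the lattice-point asymptotics, which are routine, but making rigorous the two "comparability" claims in the presence of cusps: (i) that $\ell_\Sigma(\gamma)$ is genuinely comparable to $\|\omega_\gamma\|$ uniformly over all carried $\gamma$ — one must check that a curve running many times around a complementary region near a puncture, i.e. with large weight on a diagonal edge, still has length growing linearly in that weight, which is where the $\epsilon$-geodesic hypothesis (edges of definite length, small curvature) and the bilipschitz property of lifts stated earlier are essential; and (ii) the precise bookkeeping that adding the finitely many diagonal edges around punctures does not enlarge $\ker W_{\hat\tau}$, which requires the structure of maximal train-tracks (complementary regions are triangles and once-punctured monogons) to control exactly which relations the new edges impose. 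Once these are in hand, combining the bound $\le K\cdot\#\{\text{lattice points}\}$ with the dimension count yields both finiteness of $C$ and its vanishing in the non-maximal case.
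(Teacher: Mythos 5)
Your first step (reducing to counting weight vectors via the bilipschitz property of lifts and the uniform bound $K$ on fibers from Proposition \ref{prop-cars}) matches the paper. The gap is in the lattice-point count: you count integral points of the cone $\ker(W_{\hat\tau})\cap\BR_+^{E(\hat\tau)}$ and assert that $\dim\ker W_{\hat\tau}=\dim\ker W_\tau$ because each diagonal edge ``contributes a relation'' determining its weight from the boundary weights of its complementary region. This is backwards: adding an edge $e\in E(\hat\tau)\setminus E(\tau)$ adds a \emph{variable} to the switch system, not an equation (the vertex set, hence the set of switch equations, is unchanged), so generically $\dim\ker W_{\hat\tau}=\dim\ker W_\tau+\vert E(\hat\tau)\setminus E(\tau)\vert$. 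Indeed, for a maximal recurrent trivalent train-track $W_\tau$ is already surjective (that is why $\dim\ker W_\tau=6g-6+2r$), so every added column strictly increases the kernel dimension, and your count would give growth of order $L^{6g-6+2r+m}$ with $m=\vert E(\hat\tau)\setminus E(\tau)\vert$, which does not prove the lemma. That the diagonal weights are \emph{not} determined by the $\tau$-weights is also visible in the paper's construction of $\pi_{\hat\tau}$, where curves with different diagonal weights project to the same simple multicurve.

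The missing idea is that the constraint $\iota(\gamma,\gamma)=k$ caps the diagonal weights: by the hypothesis that $\tau$ is not properly contained in a larger $\tau'\subset\hat\tau$ making $(\hat\tau,\tau',\phi)$ a radalla, every $e\in E(\hat\tau)\setminus E(\tau)$ satisfies $\iota(e,e')\ge 1$ for some edge $e'$, and since $\omega(e)\omega(e')\iota(e,e')\le\iota(\gamma,\gamma)=k$ one gets $\omega(e)\le k$. Hence the relevant weight vectors lie in finitely many translates of the lattice of integral solutions of the switch equations of $\tau$ \emph{alone}, whose dimension is at most $6g-6+2r$ with equality exactly in the maximal recurrent case; from there your dimension count and the conclusion ($C<\infty$, and $C=0$ in the non-maximal case) go through as you describe. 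Without this step the hypothesis on $\tau$ and $\hat\tau$ is never used, which is a sign that something essential is missing.
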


\begin{proof}
Given a multicurve $\gamma\in\CS_k$ carried by $\hat\tau$ let $\ell_{\hat\tau}(\gamma)$ be the length of some, and hence any, immersion homotopic to $\gamma$ and of the form $\phi\circ\gamma'$ where 
$$\gamma':\BS^1\sqcup\dots\sqcup\BS^1\to\hat\tau.$$ 
As mentioned earlier, lifts to the universal cover of curves carried by a radalla are bi-lipschitz embeddings where the bi-lipschitz constant is uniform. It follows that it suffices to prove that 
$$C'=\limsup_{L\to\infty}\frac 1{L^{6g-6+2r}}\left\vert\left\{
\begin{array}{c}
\gamma\ \hbox{multicurve},\ \ \gamma\fills\hat\tau\\
\iota(\gamma,\gamma)= k,\ \ \ell_{\hat\tau}(\gamma)\le L
\end{array}
\right\}\right\vert<\infty$$
and that $C'=0$ unless $\phi(\tau)$ is a maximal recurrent train-track. Let $c$ be an upper bound for the length of the images $\phi(e)$ of the edges of $\hat\tau$ and note that for any multicurve $\gamma$ carried by $\hat\tau$ one has
$$\ell_{\hat\tau}(\gamma)\le c\cdot\Vert\omega_\gamma\Vert_1$$
where $\Vert\cdot\Vert_1$ stands for the $L^1$-norm. Letting $\CV$ denote the subset of $\BN^{E(\hat\tau)}$ consisting  of solutions of the switch equations corresponding to some filling multicurve $\gamma$ with $\iota(\gamma,\gamma)= k$ we have thus
$$\left\vert\left\{
\begin{array}{c}
\gamma\ \hbox{multicurve},\ \ \gamma\fills\hat\tau\\
\iota(\gamma,\gamma)= k,\ \ \ell_{\hat\tau}(\gamma)\le L
\end{array}
\right\}\right\vert\le
K\cdot \left\vert\left\{
\omega\in\CV,\ \ \Vert\omega\Vert_1\le c\cdot L\\
\right\}\right\vert$$
where $K$ is the constant provided by Proposition \ref{prop-cars}. 

Note that the assumption that the vectors $\omega\in\CV$ correspond to weights of filling multicurves implies that each entry $\omega(e)$ is positive. Now, if $e\in E(\hat\tau)\setminus E(\tau)$ is an edge which is not contained in the train-track part, then there is another (possibly identical) edge $e'$ with $\iota(e,e')\ge 1$. Recalling that 
$$\omega(e)\cdot\omega(e')\cdot\iota(e,e')\le\iota(\gamma,\gamma)$$
for any $\gamma$ with $\omega_\gamma=\omega$ we get that 
$$\omega(e)\le k$$
for every $\omega\in\CV$ and every edge $e$ in $\hat\tau\setminus\tau$. This implies that $\CV$ is contained in finitely many translates of the set $\CW_\BZ$ of integral points in the linear subspace  
$$\CW=\{\omega\in\BR^{E(\tau)}\vert\ \hbox{solution of the switch equations in}\ \tau\}\subset \BR^{E(\hat\tau)}$$ 
of solutions of the switch equations supported by the train-track $\tau$. It follows that there is some $C$ with
$$\left\vert\left\{
\omega\in\CV,\ \ \Vert\omega\Vert_1\le c\cdot L\\
\right\}\right\vert\le C\left\vert\left\{
\omega\in\CW_\BZ,\ \ \Vert\omega\Vert_1\le c\cdot L\\
\right\}\right\vert$$
The linear space $\CW$ is defined over $\BZ$, and this implies that the number of integer points in $\CW$ grows like a polynomial  of degree equal to its dimension $\dim_{\BR}(\CW)$. Since, as it is well-known, $\dim_\BR(\CW)\le 6g-6+2r$ with equality if and only if $\tau$ is a maximal recurrent train-track, the claim follows.
\end{proof}

We can now prove Proposition \ref{generic}:

\begin{proof}[Proof of Proposition \ref{generic}]
By Lemma \ref{finite} there are, for all $\epsilon$, finitely many $\epsilon$-geodesic radallas $\hat{\tau}_1, \ldots, \hat{\tau}_n$ which carry all but finitely many curves in $\CS_k$. Since each radalla contains only finitely many other radallas, we can assume, up to adding finitely many radallas to our list, that the radallas $\hat\tau_i$ satisfy the condition in Lemma \ref{poly-bound} and that for all but finitely many $\gamma\in\CS_k$ there is some $i$ with $\gamma\fills\hat\tau_i$. Now, it follows from Lemma \ref{poly-bound} that the set of those multicurves carried and filling a non-maximal radalla is negligible. The claim follows because the finite union of negligible sets is negligible. 
\end{proof}

Note at this point that in fact it follows from Lemma \ref{poly-bound} and from the argument used in the proof of Proposition \ref{generic} that set $\CS_k$ has at most polynomial growth of degree $L^{6g-6+2r}$:
\[
\limsup_{L\to\infty}\frac 1{L^{6g-6+2r}}\left\vert\left\{
\gamma\in\CS_k\vert\ell_{\Sigma}(\gamma)\le L
\right\}\right\vert<\infty
\]
A lower bound of the same order of magnitude can be obtained just by adding crossings to the simple multicurves carried by some maximal recurrent train-track,  but it is anyways due to Sapir \cite{Sapir1,Sapir2}:

\begin{kor}\label{poly-growth}
Let $\Sigma$ be a hyperbolic surface of genus $g$ and $r$ punctures. Then there is $C\ge 1$ with
$$\frac 1C\le \frac 1{L^{6g-6+2r}}\left\vert\left\{
\gamma\in\CS_k\vert\ell_{\Sigma}(\gamma)\le L
\right\}\right\vert\le C$$
for all $L$ large enough.\qed
\end{kor}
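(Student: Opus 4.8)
The target statement is Corollary \ref{poly-growth}, which asserts a two-sided polynomial bound of degree $6g-6+2r$ on the counting function for $\CS_k$. The upper bound is essentially already in hand from the preceding discussion, so the real work — such as it is — is the lower bound, and both halves are short.

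\medskip

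\textbf{Proof plan.}

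For the upper bound, I would simply invoke the chain of estimates assembled just before the statement. By Lemma~\ref{finite}, for the given $k$ and some fixed $\epsilon>0$ there are finitely many $\epsilon$-geodesic radallas $\hat\tau_1,\dots,\hat\tau_n$ carrying all but finitely many $\gamma\in\CS_k$; enlarging the list by the finitely many sub-radallas of each, we may assume every $\hat\tau_i$ satisfies the hypothesis of Lemma~\ref{poly-bound} and that all but finitely many $\gamma\in\CS_k$ fill one of them. Lemma~\ref{poly-bound} then gives, for each $i$, a finite constant $C_i$ with $\limsup_{L\to\infty} L^{-(6g-6+2r)}\,\vert\{\gamma\text{ multicurve}: \gamma\fills\hat\tau_i,\ \iota(\gamma,\gamma)=k,\ \ell_\Sigma(\gamma)\le L\}\vert = C_i<\infty$. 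Since a multicurve in $\CS_k$ is counted in $\CS_k$ of length $\le L$ only if it fills one of the $\hat\tau_i$ (up to the finite exceptional set, whose contribution is $O(1)$ and hence negligible after dividing by $L^{6g-6+2r}$), summing over $i$ yields $\limsup_{L\to\infty} L^{-(6g-6+2r)}\,\vert\{\gamma\in\CS_k:\ell_\Sigma(\gamma)\le L\}\vert \le \sum_i C_i<\infty$, which is the right-hand inequality with, say, $C=\sum_i C_i+1$.

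\medskip

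For the lower bound, the idea is the one flagged in the text: produce enough elements of $\CS_k$ by perturbing simple multicurves. Fix a maximal recurrent train-track $\tau_0$ in $\Sigma$ (which exists since $\Sigma$ is not a thrice-punctured sphere), and note $\dim_\BR\CW=6g-6+2r$ where $\CW$ is its space of switch solutions; hence the number of integral weight vectors $\omega$ with $\Vert\omega\Vert_1\le L$ corresponding to simple multicurves carried by $\tau_0$ grows like a positive constant times $L^{6g-6+2r}$, and each such $\omega$ is realized by a simple multicurve of $\Sigma$-length $\le c'L$ for a fixed $c'$ (again because lifts of carried curves are uniformly bi-Lipschitz). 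Now choose a fixed vertex $v$ of $\tau_0$ and, for each such simple multicurve $\gamma_\omega$ (taking $\omega$ in the subcone where every entry is at least, say, $k+2$, which still has the same order of growth), modify $\gamma_\omega$ in a neighborhood of the tile near $v$ to introduce exactly $k$ transverse self-intersections while changing the length by a bounded amount — concretely, replace $k$ of the strands passing through that region by $k$ "criss-crossed" strands, exactly as the normal-form curves of Section~2. This produces a multicurve $\gamma_\omega'$ with $\iota(\gamma_\omega',\gamma_\omega')=k$ (one must check that the $k$ crossings so introduced are essential, which follows since the underlying weighted object is unchanged away from a single tile and the curve is still carried with a positive weight vector, so it fills $\tau_0$ and in particular represents primitive classes), of $\Sigma$-length at most $c''L$ for a fixed $c''$. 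Distinct $\omega$ give distinct $\gamma_\omega'$ because the weight vectors differ. Therefore $\vert\{\gamma\in\CS_k:\ell_\Sigma(\gamma)\le L\}\vert \ge \vert\{\omega: \Vert\omega\Vert_1 \le L/c'',\ \omega(e)\ge k+2\ \forall e\}\vert \ge \frac1{C} L^{6g-6+2r}$ for $L$ large, giving the left-hand inequality after adjusting $C$.

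\medskip

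\textbf{Main obstacle.} Neither half is deep; the only point requiring genuine care is the lower-bound construction — specifically, verifying that the $k$ self-intersections introduced by the local modification are \emph{honest}, i.e. that $\iota(\gamma_\omega',\gamma_\omega')$ is exactly $k$ and not smaller, and that $\gamma_\omega'$ is a legitimate multicurve (each component primitive and non-peripheral). The cleanest way to handle this is to run the modification inside a thickening of the maximal radalla $\hat\tau$ associated to $\tau_0$, using precisely the normal-form curves from the proof of Proposition~\ref{prop-cars}: those curves have a prescribed number of crossings localized near an exceptional segment, their self-intersection number is computed by $\iota_X$, and one already knows (from the remark after the switch-equation discussion) how $\iota_X$ and the radalla's extra edges combine to give $\iota(\gamma,\gamma)$. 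Taking $\tau_0$ itself (no extra diagonal edges) makes this bookkeeping trivial: $\iota(\gamma_\omega',\gamma_\omega')=\iota_X(\gamma_\omega',\gamma_\omega')=k$ by construction, and primitivity is inherited because the weight vector is positive so no component can wrap a curve in the surface more than once. With that observation the lower bound is immediate, and one may in fact also cite Sapir \cite{Sapir1,Sapir2} for it, as the text does.
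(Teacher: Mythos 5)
Your proposal is correct and follows the paper's own route: the upper bound is exactly the paper's combination of Lemma~\ref{finite} and Lemma~\ref{poly-bound} via the argument of Proposition~\ref{generic}, and the lower bound is precisely the one-line construction the paper alludes to (``adding crossings to the simple multicurves carried by some maximal recurrent train-track''), which you flesh out using the normal-form curves of Section~\ref{sec-radallas1}, with Sapir's work as the alternative reference the paper also cites. The one point you rightly flag — that the $k$ introduced crossings must be essential so that $\iota(\gamma_\omega',\gamma_\omega')=k$ exactly — is handled the same way the paper implicitly handles it in the computation of $s(\hat\tau,\omega,k)$ after Proposition~\ref{prop-cars}.
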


\subsection{Angles}
We prove Theorem \ref{closed-small-angles} from the introduction next:

\begin{named}{Theorem \ref{closed-small-angles}}
Let $\Sigma$ be a closed hyperbolic surface of genus $g\geq 2$ and let $\measuredangle(\gamma)\in(0,\frac\pi2]$ denote the largest angle among the self-intersections of a multicurve $\gamma\subset\Sigma$. Then 
\[
\lim_{L\to\infty}\frac{1}{L^{6g-6}}\left\vert\left\{
\begin{array}{c}
\gamma\subset \Sigma \text{ multicurve},  \iota(\gamma,\gamma)= k, \\ \measuredangle(\gamma)\geq\delta,\, \ell_\Sigma(\gamma)\leq L
\end{array}
\right\}\right\vert=0
\]
for every $k$ and every $\delta>0$.
\end{named}

\begin{proof}
Given $\delta>0$ there is $\epsilon$ with $\measuredangle(\gamma)<\delta$ for every multicurve $\gamma$ carried by some $\epsilon$-geodesic train-track. In other words, the set 
\[
\mathcal{S}_{k,\measuredangle<\delta}=\{\gamma\in\CS_k\text{ with } \measuredangle(\gamma)<\delta\}
\]
contains the set 
\begin{equation}\label{eq-blablalbalba}
\{\gamma\in\CS_k\,|\, \gamma\fills\tau \text{ for some maximal } \epsilon\text{-geodesic train-track } \tau \}
\end{equation}
of all multicurves with $k$ self-intersections which fill some $\epsilon$-geodesic train-track. As noted earlier, the assumption that $\Sigma$ is closed implies that a maximal radalla is in fact a maximal recurrent train-track. It follows thus from Proposition \ref{generic} that the set \eqref{eq-blablalbalba} is generic in $\CS_k$. 
\end{proof}

Theorem \ref{closed-small-angles} fails if $\Sigma$ is not closed. We can in fact divide the self-intersections into two types which we refer to as \emph{small} and \emph{large}. Supposing hat $\Sigma$ is not a once punctured torus (see section \ref{sec-torus} for this case) and with notation as in Proposition \ref{generic}, let $\hat{\tau}=(\hat\tau,\tau,\phi:\hat\tau\looparrowright\Sigma)$ be a maximal $\epsilon$-geodesic radalla which is filled by some curve with $k$ self-intersections. Since $\hat\tau$ is maximal, and since we are assuming that $\Sigma$ is not a once-punctured torus, it follows that all the complementary regions of $\phi(\tau)$ are either triangles or once punctured monogons. If $\epsilon$ is small enough they are indeed almost ideal triangles and almost ideal once-punctured monogons. The ideal triangles cannot contain any additional leaves of $\phi(\hat\tau)$ but the punctured monogons can have one or several such leaves which then have self-intersections with relatively large angles. In fact, a simple computation in hyperbolic geometry yields that if $\epsilon$ is small enough then every intersection between leaves $e,e'$ of a maximal $\epsilon$-geodesic radalla happen at an angle greater than $\frac 1k$ whenever $k\ge\iota(e,e)$ and $k\ge\iota(e',e')$.

Moreover, as long as $\epsilon$ is chosen small enough we get as in the proof of Theorem \ref{closed-small-angles} that the self-intersection angles of a geodesic multicurve carried by an $\epsilon$-geodesic $\hat\tau$ are close to those of the representative in the radalla. In other words we get:

\begin{lem}\label{lem-lem-lem-idontknowwhat}
Suppose that $\Sigma$ is not a once-punctured torus. For every $k$ and $\delta$ positive there is $\epsilon$ such that if $\gamma\subset\Sigma$ is a multicurve with $\iota(\gamma,\gamma)=k$ and which is carried by a maximal $\epsilon$-geodesic radalla, then $\gamma$ has no self-intersection with angle in $[\delta,\frac 1k]$. \qed
\end{lem}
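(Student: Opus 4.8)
The plan is to locate every self-intersection of $\gamma$ and to split these intersections into two families according to where, relative to a carrying radalla, they sit. Fix $k$ and $\delta>0$; we may and do assume $\delta<\frac 1k$, for otherwise the interval $[\delta,\frac 1k]$ is empty. Let $\hat\tau=(\hat\tau,\tau,\phi:\hat\tau\looparrowright\Sigma)$ be a maximal $\epsilon$-geodesic radalla carrying a multicurve $\gamma$ with $\iota(\gamma,\gamma)=k$. Since $\Sigma$ is not a once-punctured torus and $\phi(\tau)$ is a maximal recurrent train-track, each complementary region of $\phi(\tau)$ is a triangle or a once-punctured monogon; and since an arc joining cusps of a triangle is always homotopic rel endpoints into $\phi(\tau)$, every edge of $\hat\tau$ in $E(\hat\tau)\setminus E(\tau)$ lies inside a once-punctured monogon. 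Moreover, if such an extra edge $e$ has $\omega_\gamma(e)\geq 1$, then applying the inequality $\omega_\gamma(e)\,\omega_\gamma(e')\,\iota(e,e')\leq\iota(\gamma,\gamma)=k$ from the discussion of the switch equations (with $e'=e$, and with each other extra edge $e'$ with $\omega_\gamma(e')\geq 1$) yields $\iota(e,e)\leq k$ and $\iota(e,e')\leq k$; in particular every extra leaf actually used by $\gamma$ winds a bounded number of times around the puncture of its monogon.

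Next I would pass from $\gamma$ to its representative in the radalla. When $\epsilon$ is small the edges of $\hat\tau$ are $(1+o_\epsilon(1))$-quasigeodesics, the once-punctured monogons are $\epsilon$-close to the ideal once-punctured monogon, and the geodesic representative of any carried curve is $C^1$-close to a radalla representative $\phi\circ\gamma'$. Taking for $\phi\circ\gamma'$ the efficient representative — which by the intersection formula for radallas has exactly $\iota_X(\gamma,\gamma)$ crossings inside the thickening plus $\sum\iota(e,e')\omega_\gamma(e)\omega_\gamma(e')$ crossings between extra leaves, hence exactly $k$ crossings in total — the self-intersections of the geodesic $\gamma$ are in bijection with those of $\phi\circ\gamma'$ in a way that moves angles by at most $o_\epsilon(1)$. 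So it suffices to bound, up to an error $o_\epsilon(1)$, the angles of the self-intersections of $\phi\circ\gamma'$.

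Now I would treat the two families separately. A self-intersection of $\phi\circ\gamma'$ that does not lie strictly inside a once-punctured monogon occurs where $\hat\tau$ is indistinguishable from an $\epsilon$-geodesic train-track — two branches splitting at a switch, or two nearly-geodesic strands meeting — so, exactly as in the proof of Theorem \ref{closed-small-angles}, its angle tends to $0$ as $\epsilon\to 0$, hence is $<\delta$ once $\epsilon$ is small enough. A self-intersection of $\phi\circ\gamma'$ lying strictly inside a once-punctured monogon is a transverse crossing of two strands carried by extra leaves $e,e'$ (possibly $e=e'$) with $\iota(e,e),\iota(e',e')\leq k$; here a computation in hyperbolic geometry — carried out in the ideal once-punctured monogon, where geodesic arcs from the cusp to the cusp are explicit, and then stabilized under the $\epsilon$-perturbation — shows that for $\epsilon$ small enough (depending on $k$) every such crossing occurs at angle strictly larger than $\frac 1k$, with a definite amount of room. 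Choosing $\epsilon$ small enough for both estimates and small enough that the transfer error $o_\epsilon(1)$ does not spoil either inequality, we conclude that every self-intersection of $\gamma$ has angle $<\delta$ or $>\frac 1k$, as claimed.

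The main obstacle is the hyperbolic-geometry estimate for the monogon crossings: one must quantify how the angle of a self- or mutual intersection of boundedly-winding, nearly-geodesic arcs in a nearly ideal once-punctured monogon can degenerate, and extract a lower bound of order $\frac 1k$ with a constant that is uniform over all maximal $\epsilon$-geodesic radallas of $\Sigma$. A secondary, more routine, point is the bookkeeping in the second paragraph — that the radalla intersection formula really does put the crossings of the efficient radalla representative in bijection, up to angle error $o_\epsilon(1)$, with those of the geodesic representative.
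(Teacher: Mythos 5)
Your proposal is correct and follows essentially the same route as the paper: the paper likewise locates the extra edges of a maximal radalla inside the once-punctured monogons, invokes a (stated but not carried out) hyperbolic-geometry computation to show that crossings of leaves $e,e'$ with $\iota(e,e),\iota(e',e')\le k$ occur at angle greater than $\frac 1k$, and transfers the angles from the radalla representative to the geodesic representative as in the proof of Theorem \ref{closed-small-angles}. Your write-up is in fact somewhat more careful than the paper's, which records only this sketch and leaves both the monogon estimate and the bookkeeping implicit.
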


We will refer to intersections with angle larger than $\frac 1k$ as being {\em large}. The remaining ones are {\em small}. Recall that it follows from the argument leading to Lemma \ref{lem-lem-lem-idontknowwhat} that, as long as $\epsilon$ is small enough, large self-intersections of multicurves $\gamma\in\CS_k$ which are carried by a maximal $\epsilon$-geodesic radalla $\hat\tau$ correspond to intersection points of (possibly equal) edges of the radalla.


\subsection{The map} \label{sec-themap}
Still assuming that $\Sigma$ is not a once punctured torus, fix some $k$ and $\delta$ with $\delta\lll\frac 1k$ and fix once and for all $\epsilon$ positive but very small satisfying Lemma \ref{lem-lem-lem-idontknowwhat}. 

Our goal is to construct a map 
$$\pi_{\epsilon,k}:\CS_k^\epsilon\to\CM\CL_\BZ(\Sigma)$$
from the set $\CS_k^\epsilon$ of multicurves with $k$ self-intersections carried by a maximal $\epsilon$-geodesic radalla, to the set $\CM\CL_\BZ(\Sigma)=\CS_0$ of simple multicurves -- this map will be the key to relate the growth of the number of self-intersecting curves of some type to the growth of simple multicurves. To intuitively explain the construction, suppose that all the self-intersections of $\gamma\in\CS_k^\epsilon$ are small. In this case we let $\pi_{\epsilon,k}(\gamma)\in\CM\CL_\BZ(\Sigma)$ be the multicurve obtained from $\gamma$ by resolving the self-intersections in such a way that $\gamma$ and $\pi_{\epsilon,k}(\gamma)$ remain almost parallel.

Consider now the general case. Suppose that we are given $\gamma\in\CS_k^{\epsilon}$ and a maximal $\epsilon$-geodesic radalla $\hat\tau=(\hat{\tau}, \tau, \phi : \hat{\tau} \hookrightarrow \Sigma)$ with $\gamma\fills\hat\tau$. We start by associating to $\gamma$ a simple multicurve $\pi_{\hat\tau}(\gamma)$ with the help of the radalla. As before, let $E(\hat\tau)$ be the set of edges of $\hat{\tau}$ and $E(\tau)\subset E(\hat\tau)$ the set of edges of the associated train-track. Recall that we are assuming that $\Sigma$ is not a once-punctured torus. As we mentioned before, this implies that each edge in $E(\hat\tau)\setminus E(\tau)$ is contained in some punctured monogon. In particular, we can associate to each $e\in E(\hat\tau)\setminus E(\tau)$ a locally embedded loop $\tilde e\subset\tau$ in the train-track with the same endpoint as $e$ and whose image represents the boundary of the punctured monogon containing $e$ (see figure \ref{hand-picture1}). 

\begin{figure}[h]
\includegraphics[width=8cm, height=4cm]{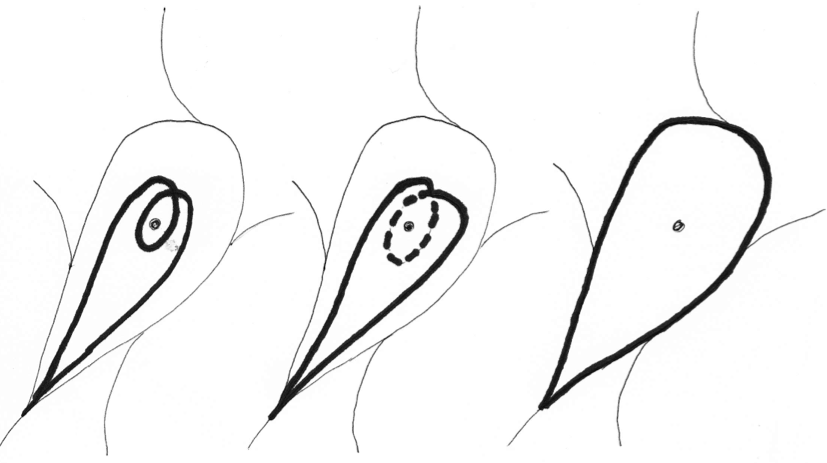}
\caption{The solid line on the left image is an edge $e\in E(\hat\tau)\setminus E(\tau)$ and on the right it represents the loop $\tilde e$. In the middle we see an intermediate step given by deleting loops around the puncture.}
\label{hand-picture1}
\end{figure}

We denote by $\omega_{\tilde e}\in\BR^{E(\tau)}$ the vector of weights of $\tilde e$ and define a linear map
\begin{equation}\label{eq:map-weights}
\pi_{\hat\tau}:\BR^{E(\hat\tau)}\to \BR^{E(\tau)}
\end{equation}
as the identity on $\BR^{E(\tau)}\subset\BR^{E(\hat\tau)}$ and as mapping the basis vector $e\in\BR^{E(\hat\tau)}$ to the vector $\omega_{\tilde e}\in\BR^{E(\tau)}$ for all $e\in E(\hat\tau)\setminus E(\tau)$ (see figure \ref{hand-picture4a}). 

Note that this map can also be viewed as being induced by a smooth map $\hat\tau\to\tau$ from the radalla to the train-track, where $\tau$ is fixed point-wise and where each additional edge $e\in E(\hat\tau)\setminus E(\tau)$ is mapped to the corresponding path $\tilde e\subset\tau$. An alternative description: each edge $e\in\hat{\tau}\setminus\tau$ has one or several loops around a puncture and we are cutting out such loops and homotopying the obtained segment into the train-track (compare with figure \ref{hand-picture1}).

\begin{figure}[h]
\includegraphics[width=9cm, height=5cm]{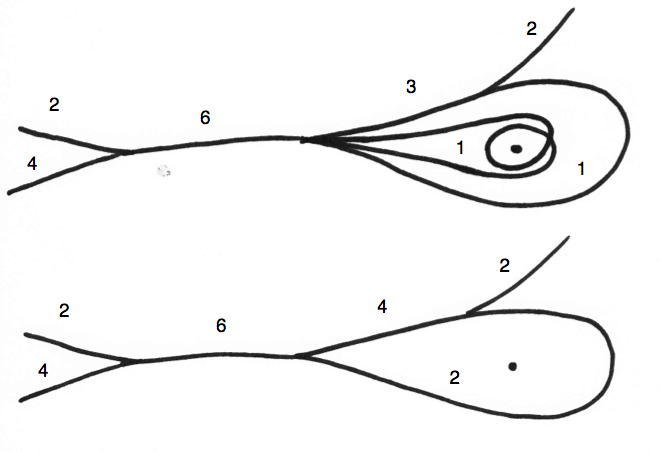}
\caption{The map $\pi_{\hat\tau}$: each number is the weight of the edge underneath. The upper picture is in the radalla $\phi(\hat\tau)$ and the lower in the train-track $\phi(\tau)$. The weights on the train-track are the images of the weights on the radalla under the map $\pi_{\hat\tau}$.}\label{hand-picture4a}
\end{figure}

Anyways, we obtain thus a way to associate to each $\gamma\fills\hat\tau$ a simple multicurve carried by the underlying train-track $\tau$ as follows: consider the vector $\omega_\gamma\in\BR^{E(\hat\tau)}$ of weights associated to $\gamma$, apply $\pi_{\hat\tau}$ as in \eqref{eq:map-weights} and let $\pi_{\hat\tau}(\gamma)\prec\tau$ be the simple multicurve with 
$$\omega_{\pi_{\hat\tau}(\gamma)}=\pi_{\hat\tau}(\omega_{\gamma}).$$
Next we give a different description of $\pi_{\hat\tau}(\gamma)$ which does not use the radalla $\hat\tau$, just its existence.

\begin{figure}[h]
\includegraphics[angle=270,origin=c,width=10cm, height=5cm]{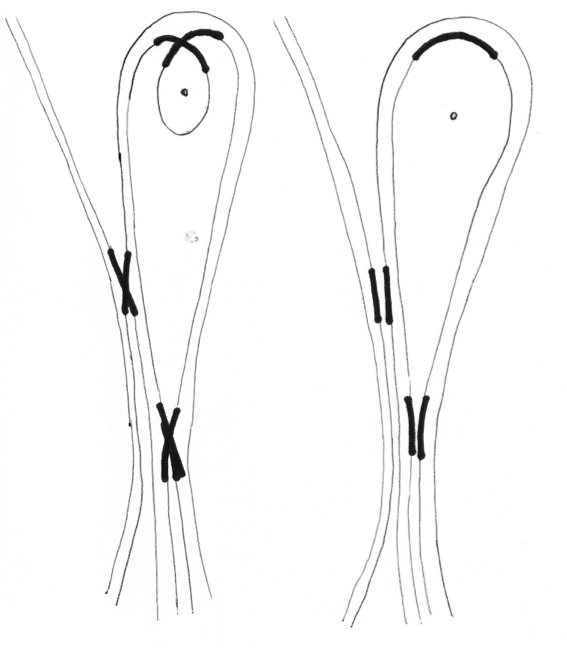}
\caption{The effect of the map $\pi_\epsilon$ on a curve. In the picture, two small intersections and one large intersection have been resolved.}
\label{hand-figure3a}
\end{figure}

Recall that by Lemma \ref{lem-lem-lem-idontknowwhat}, and by the choice of $\epsilon$, we can divide  the intersection points of any $\gamma\fills\hat{\tau}$ into two types: small and large. We resolve small intersections using the obvious local move: replace two intersecting segments by two which are disjoint to each other and almost parallel to the original ones (see figure \ref{hand-figure3a}). To resolve large intersections note that each one of them corresponds to a loop around a puncture -- we just remove all such loops (see again figure \ref{hand-figure3a}). Proceeding like this for all intersection points we obtain a simple multicurve $\gamma_0$ which is in fact homotopic to the simple multicurve that we get applying $\pi_{\hat\tau}$ (compare figure \ref{hand-picture4a} and figure \ref{hand-figure3a}). It follows thus that $\pi_{\epsilon,k}(\gamma)\stackrel{\tiny\text{def}}=\pi_{\hat\tau}(\gamma)$ does {\em not} depend on the radalla $\hat\tau$, meaning that we have a well-defined map
\begin{equation}\label{eq-themap}
\pi_{\epsilon,k}: \CS_k^{\epsilon}\to \mathcal{ML}_{\mathbb{Z}}
\end{equation}

\begin{bem}
We stress that the only requirement that we put on $\epsilon$ is that it satisfies Lemma \ref{lem-lem-lem-idontknowwhat} for some $\delta$ small enough, say for $\delta=\frac 1{100k}$. In fact, we will not need to reduce $\epsilon$ later on. However, the reader might find it reassuring to observe that if $\epsilon'\leq\epsilon$, then $\pi_{\epsilon,k}\vert_{\CS_k^{\epsilon'}}=\pi_{\epsilon',k}$. 
\end{bem}

We establish some properties of the map \eqref{eq-themap}. Suppose that $\hat\tau=(\hat{\tau}, \tau, \phi : \hat{\tau} \hookrightarrow \Sigma)$ is a radalla and that $\eta\subset\Sigma$ is a simple closed curve {\em transversal} to $\hat\tau$, by which we mean that both are in general position with respect to each other and there are no proper smooth arcs $I\subset\eta$ and $J\subset\hat\tau$ such that $I$ and $\phi(J)$ are isotopic to each other relative to their endpoints. Suppose also that $U$ is a complementary region of the train-track $\phi(\tau)$, and that $U$ is a punctured monogon. Then each component of $\eta\cap U$ meets the image $\phi(e)$ exactly twice for each edge $e\in E(\hat\tau)\setminus E(\tau)$ with $\phi(e)\subset U$. It follows thus directly from the definition of $\pi_{\hat\tau}$ that
$$\sum_{e\in E(\hat\tau)}\vert\eta\cap\phi(e)\vert\cdot\omega(e)=\sum_{e\in E(\tau)}\vert\eta\cap\phi(e)\vert\cdot(\pi_{\hat\tau}\omega)(e)$$
for every $\omega\in\BR^{E(\hat\tau)}$.
Applying this to $\omega=\omega_\gamma$ for $\gamma\fills\hat\tau$ we get that
\begin{align*}
\iota(\eta,\gamma)
&=\sum_{e\in E(\hat\tau)}\vert\eta\cap\phi(e)\vert\cdot\omega(e)\\
&=\sum_{e\in E(\tau)}\vert\eta\cap\phi(e)\vert\cdot(\pi_{\hat\tau}\omega)(e)\\
&=\iota(\eta,\pi_{\epsilon,k}(\gamma)
\end{align*}
where the first (resp. last) equality holds because $\eta$ and $\gamma$ (resp. $\pi_{\epsilon,k}(\gamma)$) are transversal. We record this fact:

\begin{lem}\label{transverse-inter}
Suppose that $\hat\tau$ is an $\epsilon$-geodesic radalla and that $\eta\subset\Sigma$ is a simple closed geodesic transversal to $\hat\tau$. Then we have
$$\iota(\eta,\pi_{\epsilon,k}(\gamma))=\iota(\eta,\gamma)$$
for every $\gamma\in\CS_k^\epsilon$ with $\gamma\fills\hat\tau$. \qed
\end{lem}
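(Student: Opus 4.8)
The plan is to prove Lemma~\ref{transverse-inter} exactly along the computation already sketched in the text preceding the statement, so the task is essentially to assemble that calculation into a self-contained argument. The three ingredients are: (i) for a multicurve $\gamma$ carried by $\hat\tau$, the intersection number with a transversal curve is read off from the weights via $\iota(\eta,\gamma)=\sum_{e\in E(\hat\tau)}|\eta\cap\phi(e)|\cdot\omega_\gamma(e)$; (ii) the analogous formula for the simple multicurve $\pi_{\epsilon,k}(\gamma)=\pi_{\hat\tau}(\gamma)$ carried by the sub-train-track $\tau$; and (iii) the key combinatorial identity $\sum_{e\in E(\hat\tau)}|\eta\cap\phi(e)|\cdot\omega(e)=\sum_{e\in E(\tau)}|\eta\cap\phi(e)|\cdot(\pi_{\hat\tau}\omega)(e)$ for every $\omega\in\BR^{E(\hat\tau)}$, which has just been established and which I would simply cite.

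First I would record why (i) holds: since $\gamma$ fills and is carried by $\hat\tau$, it is isotopic to an immersion of the form $\phi\circ\gamma'$ with $\gamma'\colon\BS^1\sqcup\cdots\sqcup\BS^1\to\hat\tau$, and since $\eta$ is transversal to $\hat\tau$ in the sense just defined, one can isotope so that $\eta$ meets $\phi(\hat\tau)$ only in interior points of edges and in minimal position; then each of the $\omega_\gamma(e)$ strands running along $\phi(e)$ crosses $\eta$ exactly $|\eta\cap\phi(e)|$ times, and by the transversality/minimality there are no superfluous crossings, so these crossings realize $\iota(\eta,\gamma)$. The same reasoning, applied to the train-track $\tau\subset\hat\tau$ and the simple multicurve $\pi_{\hat\tau}(\gamma)$ with weight vector $\pi_{\hat\tau}(\omega_\gamma)$, gives (ii); here one should note that $\eta$ transversal to $\hat\tau$ is in particular transversal to $\tau$, and that a simple multicurve carried by a train-track is automatically in minimal position with a transversal curve. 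Then I would chain the three displays: $\iota(\eta,\gamma)=\sum_{E(\hat\tau)}|\eta\cap\phi(e)|\omega_\gamma(e)=\sum_{E(\tau)}|\eta\cap\phi(e)|(\pi_{\hat\tau}\omega_\gamma)(e)=\iota(\eta,\pi_{\epsilon,k}(\gamma))$, where the middle equality is (iii) with $\omega=\omega_\gamma$ and the outer two are (i) and (ii). Since $\pi_{\epsilon,k}(\gamma)=\pi_{\hat\tau}(\gamma)$ does not depend on the chosen radalla, the conclusion holds as stated.

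The main obstacle — such as it is — is the justification that the weight-sum formulas $\iota(\eta,\cdot)=\sum|\eta\cap\phi(e)|\cdot\omega(\cdot)$ actually compute the geometric intersection number rather than merely an upper bound for it; this is where the transversality hypothesis on $\eta$ (the absence of an arc of $\eta$ isotopic rel endpoints to an arc of $\hat\tau$) is doing real work, ruling out bigons between $\eta$ and the carried multicurve. Given how carefully the paper set up the notion of $\eta$ being transversal to $\hat\tau$ precisely for this purpose, I would state this as the crux and give a one-line bigon-elimination argument: any bigon between $\eta$ and $\phi\circ\gamma'$ would, after pushing, produce an arc of $\eta$ homotopic rel endpoints into $\phi(\hat\tau)$, contradicting transversality. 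Everything else is the formal chaining already carried out in the excerpt, so the write-up is short.
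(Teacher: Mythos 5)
Your proposal is correct and is essentially identical to the paper's own argument, which consists precisely of the displayed computation in the paragraph preceding the lemma: the two weight-sum formulas for $\iota(\eta,\cdot)$ justified by transversality, chained through the identity $\sum_{e\in E(\hat\tau)}\vert\eta\cap\phi(e)\vert\cdot\omega(e)=\sum_{e\in E(\tau)}\vert\eta\cap\phi(e)\vert\cdot(\pi_{\hat\tau}\omega)(e)$. Your added bigon-elimination remark just makes explicit what the paper compresses into the phrase ``because $\eta$ and $\gamma$ (resp.\ $\pi_{\epsilon,k}(\gamma)$) are transversal.''
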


In some sense Lemma \ref{transverse-inter} asserts that $\gamma$ and $\pi_{\epsilon,k}(\gamma)$ are close to each other. Compare with Fact \ref{doesnotmove} in the next section.

Observe that the map \eqref{eq-themap} maps curves of some length to curves of roughly the same length. By itself, this already implies that $\pi_{\epsilon,k}$ is finite-to-one. The main content of the following proposition is that it is actually bounded-to-one:

\begin{prop}\label{prop-key-map}
There is $\kappa$ with $\vert\pi^{-1}_{\epsilon,k}(\gamma)\vert\leq\kappa$ for all $\gamma\in\mathcal{ML}_{\mathbb{Z}}$. On the other hand, if $\tau$ is an $\epsilon$-geodesic maximal train-track, and $\phi\in\Map(\Sigma)$ is a mapping class with $\phi(\tau)\prec\tau$, then one has $\vert\pi_{\epsilon,k}^{-1}(\phi(\gamma))\vert\ge\vert\pi_{\epsilon,k}^{-1}(\gamma)\vert$ for every simple multicurve $\gamma\fills\tau$. 
\end{prop}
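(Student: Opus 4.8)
The plan is to prove the two assertions separately. For the uniform upper bound $|\pi_{\epsilon,k}^{-1}(\gamma)|\le\kappa$, I would first reduce to working inside a fixed radalla. By Lemma \ref{finite} there are finitely many $\epsilon$-geodesic radallas $\hat\tau_1,\dots,\hat\tau_n$ carrying all but finitely many elements of $\CS_k$, and after enlarging the list (each radalla contains only finitely many sub-radallas) we may assume each $\hat\tau_i$ satisfies the hypothesis of Lemma \ref{poly-bound} and that every $\gamma\in\CS_k^\epsilon$ outside a finite set fills one of them. Fix one such maximal radalla $\hat\tau=(\hat\tau,\tau,\phi)$. If $\gamma\fills\hat\tau$ then $\pi_{\epsilon,k}(\gamma)=\pi_{\hat\tau}(\gamma)$ is the simple multicurve with weight vector $\pi_{\hat\tau}(\omega_\gamma)$, where $\pi_{\hat\tau}:\BR^{E(\hat\tau)}\to\BR^{E(\tau)}$ is the linear map from \eqref{eq:map-weights}. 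So to bound $|\pi_{\epsilon,k}^{-1}(\gamma)\cap\{\text{curves filling }\hat\tau\}|$ it suffices to bound, for a fixed simple multicurve $\delta=\pi_{\epsilon,k}(\gamma)$, the number of preimages.

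The key point is that $\pi_{\hat\tau}$ is determined by how it acts on the finitely many ``extra'' basis vectors $e\in E(\hat\tau)\setminus E(\tau)$, each sent to $\omega_{\tilde e}$, the weight vector of the loop $\tilde e$ bounding the punctured monogon containing $e$. Now recall from the proof of Lemma \ref{poly-bound} that for any $\gamma\fills\hat\tau$ with $\iota(\gamma,\gamma)=k$ one has $\omega_\gamma(e)\le k$ for every $e\in E(\hat\tau)\setminus E(\tau)$, since $e$ intersects some (possibly equal) edge $e'$ and $\omega_\gamma(e)\cdot\omega_\gamma(e')\cdot\iota(e,e')\le\iota(\gamma,\gamma)=k$. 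Thus the coordinates of $\omega_\gamma$ on the finitely many extra edges take at most $(k+1)^{|E(\hat\tau)\setminus E(\tau)|}$ possible values, and this bound depends only on $\hat\tau$ and $k$. Once those extra coordinates are fixed, the restriction of $\omega_\gamma$ to $E(\tau)$ is forced: it equals $\omega_\delta$ minus $\sum_{e\notin E(\tau)}\omega_\gamma(e)\cdot\omega_{\tilde e}$, because $\pi_{\hat\tau}$ is the identity on $\BR^{E(\tau)}$. So the weight vector $\omega_\gamma$ itself has at most finitely many (bounded in terms of $\hat\tau$ and $k$) possibilities. Finally, by Proposition \ref{prop-cars}, each such weight vector is realized by at most $K=K(\hat\tau,k)$ homotopy classes of multicurves with $k$ self-intersections. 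Taking the maximum over the finitely many $\hat\tau_i$ and adding the finite exceptional set gives the uniform bound $\kappa$.

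For the monotonicity statement: let $\tau$ be a maximal $\epsilon$-geodesic train-track (so, as noted, when $\Sigma$ is closed a maximal radalla is just $\tau$; in the punctured case one works with the maximal radalla whose train-track part is $\tau$ — though this proposition as stated with $\Sigma$ arbitrary will need that reading), let $\phi\in\Map(\Sigma)$ with $\phi(\tau)\prec\tau$, and let $\gamma\fills\tau$ be simple. The idea is to produce an injection from $\pi_{\epsilon,k}^{-1}(\gamma)$ into $\pi_{\epsilon,k}^{-1}(\phi(\gamma))$. Given $\alpha\in\pi_{\epsilon,k}^{-1}(\gamma)$, the curve $\alpha$ is carried by a maximal $\epsilon$-geodesic radalla $\hat\tau_\alpha$ with $\phi_{\hat\tau_\alpha}(\alpha)=\gamma$; the natural candidate is $\phi(\alpha)$, which is carried by $\phi(\hat\tau_\alpha)$, a radalla carried by $\hat\tau$ (using $\phi(\tau)\prec\tau$ and transitivity of carrying for radallas). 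One checks, using the description of $\pi_{\hat\tau}$ as induced by a map $\hat\tau\to\tau$ collapsing punctured-monogon loops (and that this composes compatibly under $\phi$), that $\pi_{\epsilon,k}(\phi(\alpha))=\phi(\pi_{\epsilon,k}(\alpha))=\phi(\gamma)$, and that $\phi(\alpha)$ indeed still fills a maximal $\epsilon$-geodesic radalla (this is where one may need to isotope $\phi(\hat\tau_\alpha)$ to be $\epsilon$-geodesic — legitimate since it is carried by the $\epsilon$-geodesic $\hat\tau$). Since $\phi$ is injective on isotopy classes, $\alpha\mapsto\phi(\alpha)$ is injective, giving $|\pi_{\epsilon,k}^{-1}(\phi(\gamma))|\ge|\pi_{\epsilon,k}^{-1}(\gamma)|$.

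The main obstacle I anticipate is the bookkeeping in the second part: verifying carefully that $\pi_{\epsilon,k}$ is ``natural'' enough that $\pi_{\epsilon,k}\circ\phi=\phi\circ\pi_{\epsilon,k}$ holds on the relevant subset — the subtlety being exactly the failure of global equivariance flagged in the introduction, so one must use that $\phi$ maps $\tau$ into $\tau$ (not merely that $\phi$ is a mapping class) to control the punctured monogons and their boundary loops $\tilde e$ consistently. A secondary, more bureaucratic point is handling the finitely many ``short'' exceptional curves not covered by Lemma \ref{finite} in the first part, but those contribute only a finite additive constant and are harmless.
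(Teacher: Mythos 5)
Your proposal is correct and follows essentially the same route as the paper: for the upper bound you bound the extra-edge weights by $k$ via the self-intersection count, observe that the train-track part of the weight vector is then determined by $\pi_{\hat\tau}$, and invoke Proposition \ref{prop-cars}; for the monotonicity you reduce to the equivariance identity $\pi_{\epsilon,k}(\phi(\gamma'))=\phi(\pi_{\epsilon,k}(\gamma'))$, justified exactly as in the paper by noting that $\phi(\hat\tau)$ is carried by a radalla extending $\tau$ which may be isotoped to be $\epsilon$-geodesic. The only cosmetic difference is that you appeal to Lemma \ref{finite} for a global finite list of radallas, whereas the paper directly bounds (by $r2^k+1$) the number of radallas extending the fixed train-track $\tau$ filled by $\gamma$.
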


\begin{proof}
Note that the image of $\pi_{\epsilon,k}$ consists of simple multicurves which are carried and fill some maximal $\epsilon$-geodesic train-track. Given such a train-track $\tau$ and a simple multicurve $\gamma\fills\tau$ then $\pi_{\epsilon,k}^{-1}(\gamma)$ consists of elements of $\CS_k^\epsilon$ which are carried by some radalla $\hat\tau$ extending $\tau$. Recall also that maximality of $\tau$ implies that the leaves $e$ in $\hat\tau\setminus\tau$ are contained in once-punctured monogons -- if $r$ is the number of cusps, there are $r$ such punctured monogons. Each one of these leaves in a punctured monogon intersects itself and in fact there are, up to isotopy, only $k$ leaves with at most $k$ self-intersections. All this implies that there are at most $r2^k+1$ radallas $\hat\tau$ extending $\tau$ and which are filled by some curve in $\CS_k$. 

Hence, to prove the first claim it suffices to bound, for each one of these radallas, the number of multicurves $\gamma'\in\CS_k$ with $\gamma'\fills\hat\tau$ and $\pi_{\epsilon,k}(\gamma')=\gamma$. Note that, by construction of the map, the weights $\omega_{\gamma'}$ of each such curve $\gamma'$ belong to the preimage of the map in \eqref{eq:map-weights}. Since $\iota(\gamma',\gamma')=k$ and since $\iota(e,e)\ge 1$ for each $e\in E(\hat\tau)\setminus E(\tau)$ we get that $\omega_{\gamma'}(e)\le k$ for any such edge. It follows that $\omega_{\gamma'}$ belongs to a collection of at most $k^2$ vectors in $\BR^{E(\hat\tau)}$. By Proposition \ref{prop-cars} we know that there is some $K$ such that each vector in $E(\hat\tau)$ corresponds to at most $K$ curves carried by $\hat\tau$ and with $k$ self-intersections. Altogether we get that $\pi_{\epsilon,k}^{-1}(\gamma)$ consists of at most $\kappa=(r2^k+1)k^2K$ elements. We have proved the first claim.

To prove the second claim vote that it suffices to show that 
$$\pi_{\epsilon,k}(\phi(\gamma'))=\phi(\pi_{\epsilon,k}(\gamma'))$$
for every mapping class $\phi\in\Map(\Sigma)$ with $\phi(\tau)\prec\tau$, and every $\gamma'\in\pi_{\epsilon,k}^{-1}(\gamma)$ for each $\gamma\fills\tau$. Any such $\gamma'$ is carried by some radalla $\hat\tau$ extending the train-track $\tau$. Since $\phi(\tau)\prec\tau$, we get that $\phi(\hat\tau)$ is carried by some radalla $\hat\tau'$ extending $\tau$. Note that $\hat\tau'$ can be isotoped to be $\epsilon$-geodesic. Since we can compute $\pi_{\epsilon,k}(\gamma')$ and $\pi_{\epsilon,k}(\phi(\gamma'))$ using the radallas $\hat\tau$ and $\hat\tau'$, we get thus that $\pi_{\epsilon,k}(\phi(\gamma'))=\phi(\pi_{\epsilon,k}(\gamma'))=\phi(\gamma)$, as we wanted to prove. This concludes the proof of Proposition \ref{prop-key-map}.
\end{proof}

Before moving on, note that the bound we gave in the proof of the first claim of Proposition \ref{prop-key-map} is rather brutal. In fact, in the absence of cusps we can replace the bound by the constant $K$ from Proposition \ref{prop-cars}. Moreover, in the cases when we computed $K$ explicitly (i.e. $k=1,2$) after the proof of Proposition \ref{prop-cars} one can give a formula, even in the presence of cusps, for the number of preimages. To see this, let $\tau$ be a maximal $\epsilon$-geodesic train-track and $\gamma$ a simple multicurve carried by and filling $\tau$. Suppose moreover that $\omega_\gamma(e)\ge 10$ for every edge $e\in E(\tau)$. Consider the case of $k=1$ and let $\gamma' \in\pi_{\epsilon,1}^{-1}(\gamma)\subset \CS^{\epsilon}_{1}$. Then $\gamma'\fills\hat\tau$ for some radalla $\tau'$ extending $\tau$. Since $\tau$ is maximal, any leaf of $\hat\tau\setminus\tau$ is contained in one of the $r$ once-punctured monogons. Since $\gamma'$ fills $\hat\tau$ and has exactly one self-intersection it follows that there is at most one edge in $\hat\tau\setminus\tau$ and hence there are (up to isotopy) exactly $r+1$ such radallas. Now, if $\gamma' \fills\hat\tau$ where $\hat\tau\neq\tau$ then (the homotopy class of) $\gamma'$ is uniquely determined by $\hat\tau$. If instead $\hat\tau=\tau$ there are, as explained in the discussion following the proof of Proposition \ref{prop-cars}, $\frac{V}{2}$ choices for $\gamma'$ where $V=12g-12+4r$ is the number of vertices of the maximal train-track $\tau$. Hence we have $\vert\pi_{\epsilon,1}^{-1}(\gamma)\vert=6g-6+3r$. A similar computation can be made for the case $k=2$ and we record the resulting count here, but leave the proof to the reader. 



\begin{lem}\label{countk12}
Suppose that $\Sigma$ has genus $g$ and $r$ punctures and is not homeomorphic to a once punctured torus. For some $\epsilon$ small enough, let $\tau$ be a maximal recurrent $\epsilon$-geodesic train-track and $\gamma\prec\tau$ a simple multicurve which traverses each edge of $\tau$ at least 10 times, that is $\omega_\gamma(e)\ge 10$ for all $e\in E(\tau)$. Then we have
\begin{itemize}
\item $\vert\pi_{\epsilon,1}^{-1}(\gamma)\vert=6g-6+3r$.
\item $\vert\pi_{\epsilon,2}^{-1}(\gamma)\vert=\frac92\big((2g+r)(2g+r-3)+2\big)$.\qed
\end{itemize}
\end{lem}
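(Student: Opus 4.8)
The first bullet was already obtained in the discussion preceding the lemma; the plan for the second is to run the same bookkeeping while tracking two self‑intersections. Fix a simple $\gamma\fills\tau$ with $\omega_\gamma(e)\ge 10$ for every edge. Every $\gamma'\in\pi_{\epsilon,2}^{-1}(\gamma)$ fills some maximal $\epsilon$‑geodesic radalla $\hat\tau$ extending $\tau$, and since $\tau$ is maximal and $\Sigma$ is not a once‑punctured torus every edge of $\hat\tau\setminus\tau$ lies inside one of the $r$ once‑punctured monogons; an edge winding $m$ times around the puncture satisfies $\iota(e,e)=m-1$ and, being non‑homotopic to the monogon boundary, has $m\ge 2$. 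As $\gamma'$ fills $\hat\tau$ we have $\omega_{\gamma'}(e)\ge 1$ on each such edge, and $\iota(e,e)\,\omega_{\gamma'}(e)^2\le\iota(\gamma',\gamma')=2$ forces $\omega_{\gamma'}(e)=1$ and $m\in\{2,3\}$. Feeding this into the identity $\iota(\gamma',\gamma')=\iota_X(\gamma',\gamma')+\sum_{\{e,e'\}}\iota(e,e')\,\omega_{\gamma'}(e)\,\omega_{\gamma'}(e')$ from the remark after the switch equations — and using that two extra edges in the \emph{same} monogon must be non‑homotopic, hence would contribute at least $1+2=3$ — one sees that exactly four families of radallas occur: (i) $\hat\tau=\tau$, with $\iota_X=2$; (ii) one extra edge winding twice, with $\iota_X=1$; (iii) one extra edge winding thrice, with $\iota_X=0$; (iv) two extra edges winding twice, in two distinct monogons, with $\iota_X=0$. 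The radalla filled by a given $\gamma'$ is determined by $\gamma'$ (by which monogons contain its large self‑intersections and how many lie in each), so $\pi_{\epsilon,2}^{-1}(\gamma)$ is the disjoint union of the four contributions.

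Next I would count each family. For (i) this is precisely $s(\tau,\omega_\gamma,2)=\tfrac{V(V+6)}8$, the count sketched after Proposition~\ref{prop-cars}, with $V=12g-12+4r$ the number of vertices of a maximal trivalent train‑track on $\Sigma$. In families (ii)--(iv) the map $\pi_{\hat\tau}$ of \eqref{eq:map-weights} is linear, the extra edges carry weight $1$, and they are sent to the monogon‑boundary loops $\tilde e$, whose weight vectors are short; hence $\pi_{\hat\tau}(\omega_{\gamma'})=\omega_\gamma$ has a \emph{unique} solution $\omega_{\gamma'}$, with $\omega_{\gamma'}|_\tau=\omega_\gamma$ minus the relevant boundary vectors, still strictly positive since $\omega_\gamma(e)\ge 10$. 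For (iii) and (iv) one has $\iota_X(\gamma',\gamma')=0$, so $\gamma'$ is simple in $X$ and is therefore determined by its weight vector (just as simple multicurves carried by a train‑track are): one curve for each of the $r$, respectively $\binom{r}{2}$, admissible radallas. For (ii) one has $\iota_X(\gamma',\gamma')=1$, i.e.\ exactly one small crossing on top of the forced crossing of the winding edge; repeating the normal‑form/sliding argument used for $k=1$ — a single small crossing of a normal form sits in a tile adjacent to an exceptional vertical segment, and the slide move pairs these segments off two by two — gives $V'/2$ curves, where $V'$ is the number of exceptional vertical segments of the radalla. Adjoining the loop $e$ at the cusp‑vertex of its monogon inserts two band‑ends on the wide side of that vertex, so $V'=V+2$; hence (ii) contributes $r\cdot\tfrac{V+2}{2}$.

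Summing the four contributions,
\[
|\pi_{\epsilon,2}^{-1}(\gamma)|=\frac{V(V+6)}8+r\cdot\frac{V+2}{2}+r+\binom{r}{2},
\]
and substituting $V=12g-12+4r$ and simplifying yields $\tfrac92\big((2g+r)(2g+r-3)+2\big)$; for $r=0$ this reduces to the closed‑surface value $\tfrac{V(V+6)}8=9(2g-1)(g-1)$ recorded earlier, and the cases $(g,r)=(1,2),(2,1),(0,4)$ give $27,54,27$ as the formula predicts. The main obstacle is the combinatorial input for family (ii): one must check carefully that the thickening of $\hat\tau$ has exactly $V+2$ exceptional vertical segments and, more delicately, that the slide move still matches them up perfectly — the $k=1$ argument used that the complementary regions of the underlying train‑track are ``nice'', and one has to verify that subdividing a once‑punctured monogon by a winding edge does not spoil this (it cannot produce a slide cycle, since that would force a non‑primitive component, exactly as in Claim~2). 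A secondary, essentially bureaucratic point is confirming that the enumeration of admissible radallas is exhaustive, for which the numerical checks above are a reassuring test.
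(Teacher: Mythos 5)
Your proposal is correct and follows exactly the route the paper intends: the first bullet is the computation carried out verbatim in the discussion preceding the lemma, and the second bullet is the ``similar computation'' the paper explicitly leaves to the reader, which you reconstruct by enumerating the admissible radallas extending $\tau$ (the four families), applying the normal-form counts $s(\hat\tau,\omega,k)$ and the uniqueness of weight preimages, and summing; I checked that your sum $\frac{V(V+6)}{8}+r\cdot\frac{V+2}{2}+r+\binom{r}{2}$ with $V=12g-12+4r$ does simplify to $\frac92\bigl((2g+r)(2g+r-3)+2\bigr)$. The two points you flag as needing care (the count $V'=V+2$ of exceptional segments and the perfect matching under the slide move) are genuine but are resolved exactly as in the paper's Claim 2, so there is no gap.
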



\section{}\label{sec-measures}

Suppose that $\Sigma$ is a hyperbolic surface other than a punctured torus and fix some natural number $k$ -- the case of the torus with be discussed in section \ref{sec-torus}. We also fix $\gamma_0\in\CS_k$, i.e.~a multicurve in $\Sigma$ with $\iota(\gamma_0,\gamma_0)=k$, and let 
$$\CS_{\gamma_0}=\Map(\Sigma)\cdot\gamma_0\subset\CS_k$$ 
be the set of curves of type $\gamma_0$. In this section we associate to this set and to every $L>0$ a measure $\nu_{\gamma_0}^L$ on the space $\CC(\Sigma)$ of {\em currents} on $\Sigma$. We will show that every accumulation point of $\nu_{\gamma_0}^L$ when $L\to\infty$ is a multiple of the Thurston measure $\mu_{\Thu}$ on the subspace $\CM\CL(\Sigma)$ of $\CC(\Sigma)$ consisting of measured laminations. We will moreover prove that the existence of an actual limit is equivalent to the existence of the limit \eqref{eq1}. We begin by recalling a few facts on currents, measured laminations, and the Thurston measure. Before doing so we refer to \cite{Casson-Bleiler,Thurston-notes,FLP} for basic facts on measured laminations (or equivalently, measured foliations) and their relation to train-tracks, and to \cite{Bonahon86,Bonahon88,Bonahon-currents-groups,Otal} for basic facts on currents and their relation to laminations. Both laminations and currents are treated in the extremely readable paper \cite{Javi-Cris}.

\subsection{Currents}
The total space $PT\Sigma$ of the projective tangent bundle of $\Sigma$ has a 1-dimensional foliation, the {\em geodesic foliation}, whose leaves are the traces of geodesics in $\Sigma$. This foliation is intrinsic in the sense that any homeomorphism $\Sigma\to\Sigma'$ between hyperbolic (or even just negatively curved) surfaces induces a homeomorphism $PT\Sigma\to PT\Sigma'$ which maps the geodesic foliation on $PT\Sigma$  to the geodesic foliation on $PT\Sigma'$. This is basically the well-known fact that geodesic flows on homeomorphic negatively curved manifolds of finite volume are orbit equivalent to each other. 

A {\em geodesic current} is a Radon transverse measure to the geodesic foliation. Equivalently a geodesic current is a $\pi_1(\Sigma)$-invariant Radon measure on the space $\CG(\tilde\Sigma)$ of geodesic in the universal cover $\tilde\Sigma$ of $\Sigma$. Also,  geodesic currents are in one-to-one correspondence with Radon measures invariant under both the geodesic flow and the geodesic flip. Recall that a Borel measure is Radon if it is locally finite and inner regular.

We denote the space of all geodesic currents endowed with the weak-*-topology by $\CC(\Sigma)$. If $\Sigma\to\Sigma'$ is a homotopy equivalence, then the map $\CC(\Sigma)\to\CC(\Sigma')$ between the spaces of currents induced by the foliation preserving homeomorphism $PT\Sigma\to PT\Sigma'$ is also a homeomorphism. In particular, the mapping class group $\Map(\Sigma)$ acts on $\CC(\Sigma)$ by homeomorphisms.

If $K\subset PT\Sigma$ is a compact set invariant under the geodesic flow, i.e. $K$ is saturated with respect to the geodesic foliation, let
$$\CC_K(\Sigma)=\{\lambda\in\CC(\Sigma)\vert\lambda(PT\Sigma\setminus K)=0\}$$ 
denote the set of currents (considered as measures invariant under the geodesic flow) supported by $K$. We will be interested in currents supported by a compact set because by Lemma \ref{compact} all curves in $\CS_{\gamma_0}$ live in a fixed compact set. On the other hand, currents such as the Liouville current $\lambda_{\Sigma}$ associated to the hyperbolic metric of $\Sigma$ belongs to some $\CC_K(\Sigma)$ only if $\Sigma$ itself is compact.

Every primitive curve (and in fact, every multicurve) in $\Sigma$ can be considered as a current: namely the Dirac measure centred at the said curve. In this way we can see the set $\CS=\CS(\Sigma)$ of all multicurves in $\Sigma$ as a subset of the space $\CC(\Sigma)$ of currents. In fact, it is known that the set $\BR_+\CS$ of all {\em weighted multicurves} is dense in $\CC(\Sigma)$. Moreover, when $\Sigma$ is closed, then the geometric intersection number $\CS\times\CS\to\BN$ extends uniquely to a continuous symmetric map $\CC(\Sigma)\times\CC(\Sigma)\to\BR_+$. In the presence of cusps things are more complicated, for instance because the map might take the value $\infty$. Anyways, the standard argument \cite{Bonahon86} proves that for every compact set $K\subset PT\Sigma$ invariant under the geodesic flow we have that\begin{equation}\label{intersection-form}
\iota:\CC(\Sigma)\times\CC_K(\Sigma)\to\BR_+
\end{equation}
takes finite values and is continuous. Moreover, $\iota(a\cdot\lambda,b\cdot\mu)=ab\cdot\iota(\lambda,\mu)$ whenever $a,b\in\BR_+$ are positive reals and $\lambda\in\CC(\Sigma)$ and $\mu\in\CC_K(\Sigma)$ are currents. The map \eqref{intersection-form} is called the {\em intersection form}. Note that if $\Sigma\to\Sigma'$ is again a homotopy equivalence between hyperbolic surfaces, then the homeomorphism $\CC(\Sigma)\to\CC(\Sigma')$ commutes with the scaling action of $\BR_+$ and with the intersection form. In particular, the intersection form \eqref{intersection-form} is invariant under the mapping class group of the surface.

We list a few facts on the intersection form \eqref{intersection-form}:
\begin{itemize}
\item If $\lambda_\Sigma$ is the Liouville current of the hyperbolic metric on $\Sigma$ then
$$\iota(\lambda_\Sigma,\gamma)=\ell_\Sigma(\gamma)$$ 
for every multicurve $\gamma\subset\Sigma$.
\item A current $\mu$ is {\em filling} if every geodesic in $\Sigma$ is transversally intersected by some geodesic in the support of $\mu$. For instance, if $\gamma\in\CS$ is a filling multicurve in the sense that it cuts $\Sigma$ into polygons and punctured monogons, then $\gamma$ is also filling as a current. The Liouville current $\lambda_\Sigma$ is also filling. 
\item If $\lambda$ is a filling current and $K\subset PT\Sigma$ is compact, then the set $\{\mu\in\CC_K(\Sigma)\vert\iota(\lambda,\mu)\le 1\}$ is compact in $\CC_K(\Sigma)$.
\item For any compact set $K\subset PT\Sigma$ invariant under the geodesic flow, and for any two filling currents $\mu,\lambda\in\CC(\Sigma)$, there is some $L$ with
$$\frac 1L\iota(\mu,\eta)\le\iota(\lambda,\eta)\le L\iota(\mu,\eta)$$
for every $\eta\in\CC_K(\Sigma)$.
\end{itemize}
See the references given earlier for proofs, keeping in mind that we are restricting \eqref{intersection-form} on the second factor to compactly supported currents. Under this restriction, the proofs are exactly the same as in the case that $\Sigma$ is closed.

As above, suppose that $\gamma_0\in\CS_k$ is a multicurve in $\Sigma$ with $\iota(\gamma_0,\gamma_0)=k$, and let 
$$\CS_{\gamma_0}=\Map(\Sigma)\cdot\gamma_0\subset\CS_k$$ 
be the set of curves of type $\gamma_0$. Consider the elements of $\CS_{\gamma_0}$ as currents on $\Sigma$ and let $K\subset PT\Sigma$ be a compact set, given by Lemma \ref{compact}, such that 
\begin{equation}\label{curvesascurrents}
\CS_{\gamma_0}\subset\CC_K(\Sigma).
\end{equation}
For $L>0$ we consider the measure
\begin{equation}\label{eq-measure-nu}
\nu_{\gamma_0}^L=\frac 1{L^{6g-6+2r}}\sum_{\gamma\in\CS_{\gamma_0}}\delta_{\frac 1L\gamma}
\end{equation}
on $\CC_K(\Sigma)\subset\CC(\Sigma)$, where $\delta_x$ stands for the Dirac measure centered at $x$. The goal of the sequel is to study the behavior of the measures $\nu_{\gamma_0}^L$ when $L$ tends to $\infty$. Among other things we will prove that every accumulation point of $\nu_{\gamma_0}^L$ when $L\to\infty$ is a multiple of the Thurston measure on $\mu_{\Thu}$ on the subspace of $\CC(\Sigma)$ consisting of measured laminations.

\subsection{Measured laminations}
Recall that a measured lamination is a lamination endowed with a transverse measure of full support. As such, a measured lamination is also a current. In fact, a current $\lambda\in\CC(\Sigma)$ is a measured lamination if and only if $\iota(\lambda,\lambda)=0$. Noting that laminations are contained in the compact set provided by Lemma \ref{compact}, we can see the space $\CM\CL(\Sigma)$ of all measured laminations on $\Sigma$ is thus a subset of $\CC_K(\Sigma)$ with the same $K$ as in \eqref{curvesascurrents}:
\begin{equation}\label{curvesascurrents2}
\CM\CL(\Sigma)\subset\CC_K(\Sigma).
\end{equation}
Being a subset of $\CC(\Sigma)$, the space $\CM\CL(\Sigma)$ of measured laminations has an induced topology. In fact, Thurston proved that $\CM\CL(\Sigma)$ is homeomorphic to $\BR^{6g-6+2r}$. Moreover, similar to the classical Fenchel-Nielsen coordinates, there is a finite collection (see for instance \cite{chefin}) of simple curves $\eta_1,\dots,\eta_s$ such that
\begin{equation}\label{FNML}
\text{if }\iota(\lambda,\eta_i)=\iota(\mu,\eta_i)\text{ for all }i\text{ then }\lambda=\mu
\end{equation}
for any two $\lambda,\mu\in\CM\CL(\Sigma)$. Note that the collection of curves $\eta_1,\dots,\eta_s$ is very far from being unique -- for instance, transforming this collection via a mapping class we get a new collection with the same property.

The space $\CM\CL(\Sigma)$ has not only a natural topology, but also a compatible mapping class group invariant integral PL-manifold structure. Here, integral means that the change of charts are given by linear transformations with integral coefficients. In particular, the set of integral points in $\CM\CL(\Sigma)$ is well-defined. In fact, for every recurrent train-track $\tau$ in $\Sigma$ one has the simplex in $\CM\CL(\Sigma)$ consisting of all measured laminations carried by $\tau$ -- the integral points in this simplex correspond simply to the integral solutions of the weight equations. It follows that the set of integral points of $\CM\CL(\Sigma)$ is just the set $\CM\CL_\BZ(\Sigma)$ of simple multicurves in $\Sigma$. Note before going further that the set of measured laminations carried by a train-track $\tau$ is full-dimensional if and only if $\tau$ is a maximal recurrent train-track.

The PL-manifold $\CM\CL(\Sigma)$ is in fact endowed with a mapping class group invariant symplectic structure \cite{Penner-Harer} and hence with a mapping class group invariant measure in the Lebesgue class. This measure is the so-called {\em Thurston measure} $\mu_{\Thu}$. It is an infinite but locally finite measure, positive on non-empty open sets, and satisfying
$$\mu_{\Thu}(L\cdot U)=L^{6g-6+2r}\mu_{\Thu}(U)$$ 
for all $U\subset\CM\CL(\Sigma)$ and $L>0$. Note that this implies that $\mu_{\Thu}(A)=0$ if $A\cap L\cdot A=\emptyset$ for all $L>0$. In particular,
$$\mu_{\Thu}(\{\lambda\in\CM\CL(\Sigma)\vert\iota(\lambda_0,\lambda)=1\})=0$$
for every filling current $\lambda_0\in\CC(\Sigma)$.

On charts, the measure $\mu_{\Thu}$ is just the standard Lebesgue measure and the integral points of $\CM\CL(\Sigma)$ are just points in the integral lattice. Hence we get, under weak assumptions on $U$, that the Thurston measure of a set $U$ can be computed by counting the integral points in $L\cdot U$, dividing by the appropriate power of $L$, and letting $L$ go to $\infty$. In a more succinct way
$$\mu_{\Thu}=\lim_{L\to\infty}\frac 1{L^{6g-6+2r}}\sum_{\gamma\in\CM\CL_\BZ}\delta_{\frac 1L\gamma}$$
where $\delta_x$ is as always the Dirac measured centered at $x$. Finally, but most crucially, it is a theorem of Masur \cite{Masur} that $\mu_{\Thu}$ is invariant and ergodic under the action of the mapping class group. 

\subsection{Sub-convergence of the measures $\nu_{\gamma_0}^L$}
As we mentioned earlier, we are interested in the behavior of the measures $\nu_{\gamma_0}^L$ defined in \eqref{eq-measure-nu} when $L$ grows. Our first goal is to prove that any accumulation point is a multiple of the Thurston measure $\mu_{\Thu}$:

\begin{prop}\label{prop-sublimit}
Any sequence $(L_n)_n$ of positive numbers with $L_n\to\infty$ has a subsequence $(L_{n_i})_i$ such that the measures $(\nu_{\gamma_0}^{L_{n_i}})_i$ converge in the weak-*-topology to the measure $\alpha\cdot\mu_{\Thu}$ on $\CM\CL(\Sigma)\subset\CC(\Sigma)$ for some $\alpha>0$.
\end{prop}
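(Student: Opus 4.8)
The plan is to follow the route sketched in the introduction: extract a subsequential weak-$*$ limit of the measures $\nu_{\gamma_0}^L$, show it is supported on $\CM\CL(\Sigma)$, is $\Map(\Sigma)$-invariant and absolutely continuous with respect to the Thurston measure, invoke Masur's ergodicity theorem to conclude it is a multiple of $\mu_{\Thu}$, and finally check that the multiple is positive.

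First I would establish precompactness. By \eqref{curvesascurrents} every atom of every $\nu_{\gamma_0}^L$ lies in the fixed compact set $\CC_K(\Sigma)$, and for the filling current $\lambda_\Sigma$ the sublevel sets $\{\mu\in\CC_K(\Sigma)\mid\iota(\lambda_\Sigma,\mu)\le R\}$ are compact, exhaust $\CC_K(\Sigma)$ as $R\to\infty$, and contain every compact subset of $\CC_K(\Sigma)$. Since $\nu_{\gamma_0}^L(\{\iota(\lambda_\Sigma,\cdot)\le R\})=L^{-(6g-6+2r)}\,\vert\{\gamma\in\CS_{\gamma_0}\mid\ell_\Sigma(\gamma)\le RL\}\vert$ is, by Corollary \ref{poly-growth} applied to $\CS_{\gamma_0}\subset\CS_k$, bounded uniformly in $L$, the family $(\nu_{\gamma_0}^L)$ has uniformly bounded mass on compacta; a sequential Banach--Alaoglu (Helly) argument then yields, for any $L_n\to\infty$, a subsequence $(L_{n_i})$ along which $\nu_{\gamma_0}^{L_{n_i}}$ converges weak-$*$ to a Radon measure $\mu$ on $\CC_K(\Sigma)$. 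Two features of $\mu$ are immediate. It is supported on $\CM\CL(\Sigma)$: since $\iota(\tfrac1L\gamma,\tfrac1L\gamma)=k/L^2\to0$ for $\gamma\in\CS_{\gamma_0}\subset\CS_k$, for each $\eta>0$ the open set $\{\mu'\mid\iota(\mu',\mu')>\eta\}$ carries no $\nu_{\gamma_0}^L$-mass for large $L$, hence no $\mu$-mass (continuity of $\iota$ on $\CC_K(\Sigma)\times\CC_K(\Sigma)$ together with the usual lower semicontinuity of the mass of open sets under weak-$*$ limits), and $\CM\CL(\Sigma)=\{\iota(\cdot,\cdot)=0\}$. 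And $\mu$ is $\Map(\Sigma)$-invariant, because $\CS_{\gamma_0}=\Map(\Sigma)\cdot\gamma_0$ is mapping-class-group invariant and $\Map(\Sigma)$ acts on $\CC(\Sigma)$ by homeomorphisms commuting with the scaling $\gamma\mapsto\tfrac1L\gamma$, so each $\nu_{\gamma_0}^L$, and hence $\mu$, is invariant.

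The key step is absolute continuity. Pushing the restriction of $\nu_{\gamma_0}^L$ to the generic set $\CS_{\gamma_0}^\epsilon$ forward under the map $\pi_{\epsilon,\gamma_0}$ of section \ref{sec-themap} gives $\mu_{\epsilon,\gamma_0}^L=L^{-(6g-6+2r)}\sum_{\gamma\in\CM\CL_\BZ}\vert\pi_{\epsilon,\gamma_0}^{-1}(\gamma)\vert\,\delta_{\frac1L\gamma}$. By the uniform bound $\vert\pi_{\epsilon,\gamma_0}^{-1}(\gamma)\vert\le\kappa$ of Proposition \ref{prop-key-map}, one has $\mu_{\epsilon,\gamma_0}^L\le\kappa\cdot L^{-(6g-6+2r)}\sum_{\gamma\in\CM\CL_\BZ}\delta_{\frac1L\gamma}$, whose right-hand side converges to $\kappa\cdot\mu_{\Thu}$ by the lattice-point description of the Thurston measure recalled above. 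On the other hand $\CS_{\gamma_0}\setminus\CS_{\gamma_0}^\epsilon\subset\CS_k\setminus\CS_k^\epsilon$ is negligible (Proposition \ref{generic}) and $\pi_{\epsilon,\gamma_0}$ displaces curves only boundedly (Lemma \ref{transverse-inter}), so $\nu_{\gamma_0}^L$ and $\mu_{\epsilon,\gamma_0}^L$ have the same weak-$*$ accumulation points (Lemma \ref{alllimitsthesame}); thus $\mu_{\epsilon,\gamma_0}^{L_{n_i}}\to\mu$ as well, and testing against nonnegative $f\in C_c(\CC_K(\Sigma))$ gives $\int f\,d\mu\le\kappa\int f\,d\mu_{\Thu}$, i.e. $\mu\le\kappa\,\mu_{\Thu}$ as measures. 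In particular $\mu$ is $\sigma$-finite and $\mu\ll\mu_{\Thu}$; its Radon--Nikodym derivative $d\mu/d\mu_{\Thu}$ is then a bounded $\Map(\Sigma)$-invariant measurable function on $\CM\CL(\Sigma)$, which by Masur's ergodicity theorem \cite{Masur} must be constant. Hence $\mu=\alpha\cdot\mu_{\Thu}$ for some $\alpha\in[0,\kappa]$.

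Finally I would rule out $\alpha=0$. This reduces to the matching lower bound $\vert\{\gamma\in\CS_{\gamma_0}\mid\ell_\Sigma(\gamma)\le L\}\vert\ge c\,L^{6g-6+2r}$ for some $c>0$, which is due to Sapir \cite{Sapir1,Sapir2} (and can also be produced by hand by composing $\gamma_0$ with products of Dehn twists along a filling collection of curves): it gives $\nu_{\gamma_0}^L(\{\iota(\lambda_\Sigma,\cdot)\le1\})\ge c$ for all large $L$, and choosing $f\in C_c(\CC_K(\Sigma))$ with $f\ge\mathbf 1$ on the compact set $\{\iota(\lambda_\Sigma,\cdot)\le1\}$ gives $\int f\,d\mu=\lim_i\int f\,d\nu_{\gamma_0}^{L_{n_i}}\ge c>0$, so $\alpha>0$. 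The main obstacle is the absolute continuity step: it is precisely the uniform bounded-to-one property of $\pi_{\epsilon,\gamma_0}$ (Proposition \ref{prop-key-map}, resting on the normal-form analysis of Proposition \ref{prop-cars}) together with the fact that $\pi_{\epsilon,\gamma_0}$ is essentially length-preserving that prevents the subsequential limit from being singular relative to $\mu_{\Thu}$; once this is in hand, Masur's theorem does the rest.
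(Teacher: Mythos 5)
Your argument is correct and follows the paper's own proof essentially step for step: the domination $\mu_{\epsilon,\gamma_0}^L\le\kappa\cdot L^{-(6g-6+2r)}\sum_{\gamma\in\CM\CL_\BZ}\delta_{\gamma/L}$ coming from the uniform fiber bound of Proposition \ref{prop-key-map}, the transfer between $\nu_{\gamma_0}^L$ and $\mu_{\epsilon,\gamma_0}^L$ via Lemma \ref{alllimitsthesame}, the mapping class group invariance of the measures $\nu_{\gamma_0}^L$, and Masur's ergodicity theorem are exactly the ingredients used there. The only (welcome) addition is your explicit verification that $\alpha>0$ via the coarse polynomial lower bound on the orbit counting function, a point the paper's written proof leaves implicit.
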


In preparation to prove Proposition \ref{prop-sublimit} fix $\epsilon$ as in the beginning of section \ref{sec-themap} and consider the corresponding set $\CS^\epsilon_{\gamma_0}$ of those elements in $\CS_{\gamma_0}$ which fill a maximal $\epsilon$-geodesic radalla. Let
$$\pi_{\epsilon,\gamma_0}=\pi_{\epsilon,k}\vert_{\CS_{\gamma_0}^\epsilon}:\CS_{\gamma_0}^\epsilon\to\CM\CL_\BZ(\Sigma)$$
be the restriction of the map \eqref{eq-themap} to $\CS_{\gamma_0}^\epsilon$. 

In order to prove Proposition \ref{prop-sublimit} we will make use of another family of measures on $\CC(\Sigma)$:
\begin{equation}
\mu_{\epsilon,\gamma_0}^L=\frac 1{L^{6g-6+2r}}\sum_{\gamma\in\CM\CL_\BZ}\vert\pi_{\epsilon,\gamma_0}^{-1}(\gamma)\vert\delta_{\frac 1L\gamma}\label{eq-measure-muL}
\end{equation}
The measures $\nu_{\gamma_0}^L$ and $\mu_{\epsilon,\gamma_0}^L$ are closely related to each other, and in fact the first step of the proof of Proposition \ref{prop-sublimit} is to show that the asymptotic behaviors of the measures $\nu_{\gamma_0}^L$ and $\mu_{\epsilon,\gamma_0}^L$ are identical:

\begin{lem}\label{alllimitsthesame}
Let $(L_n)_n$ be a sequence tending to $\infty$. If one of the limits 
$$\lim_{n\to\infty}\mu_{\epsilon,\gamma_0}^{L_n}\text{ and }\lim_{n\to\infty}\nu_{\gamma_0}^{L_n}$$
exists with respect to the weak-*-topology on the space of locally finite measures on the space of currents $\CC(\Sigma)$, then the other also exists and both agree.
\end{lem}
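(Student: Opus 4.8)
The plan is to show that the difference $\nu_{\gamma_0}^{L}-\mu_{\epsilon,\gamma_0}^{L}$ tends to zero in the weak-$*$-topology as $L\to\infty$, which immediately gives the desired equivalence of the two limits. To this end, decompose $\nu_{\gamma_0}^{L}$ as the sum of the part coming from the generic set $\CS_{\gamma_0}^{\epsilon}$ and the part coming from its complement. First I would dispose of the complement: by Proposition \ref{generic} the set $\CS_k\setminus\CS_k^{\epsilon}$ is negligible, hence so is its subset $\CS_{\gamma_0}\setminus\CS_{\gamma_0}^{\epsilon}$, which by the very definition of negligibility means that the measures
$$\frac{1}{L^{6g-6+2r}}\sum_{\gamma\in\CS_{\gamma_0}\setminus\CS_{\gamma_0}^{\epsilon}}\delta_{\frac1L\gamma}$$
tend to $0$ when tested against a continuous function with compact support (pairing such an $f$ against the Dirac masses only counts currents $\frac1L\gamma$ in a fixed compact set of $\CC_K(\Sigma)$, i.e.\ curves $\gamma$ with $\ell_\Sigma(\gamma)\le c\cdot L$ for some $c$ depending on $\mathrm{supp}(f)$ and a filling current, via the fourth bulleted fact on the intersection form). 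So asymptotically $\nu_{\gamma_0}^{L}$ coincides with $\frac{1}{L^{6g-6+2r}}\sum_{\gamma\in\CS_{\gamma_0}^{\epsilon}}\delta_{\frac1L\gamma}$.

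Next I would compare this truncated sum with $\mu_{\epsilon,\gamma_0}^{L}$. Group the sum over $\CS_{\gamma_0}^{\epsilon}$ according to the fibers of $\pi_{\epsilon,\gamma_0}$: writing $\gamma'=\pi_{\epsilon,\gamma_0}(\gamma)$, we have
$$\frac{1}{L^{6g-6+2r}}\sum_{\gamma\in\CS_{\gamma_0}^{\epsilon}}\delta_{\frac1L\gamma}
=\frac{1}{L^{6g-6+2r}}\sum_{\gamma'\in\CM\CL_\BZ}\ \sum_{\gamma\in\pi_{\epsilon,\gamma_0}^{-1}(\gamma')}\delta_{\frac1L\gamma},$$
whereas
$$\mu_{\epsilon,\gamma_0}^{L}=\frac{1}{L^{6g-6+2r}}\sum_{\gamma'\in\CM\CL_\BZ}\ \sum_{\gamma\in\pi_{\epsilon,\gamma_0}^{-1}(\gamma')}\delta_{\frac1L\gamma'}.$$
Thus it suffices to show that, after scaling by $\frac1L$, each $\gamma$ in the fiber and its image $\gamma'$ become indistinguishable as currents in the limit, uniformly over the fiber, and that the bounded multiplicity from Proposition \ref{prop-key-map} keeps the total mass under control. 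For the former, I would invoke Lemma \ref{transverse-inter}: for a fixed finite collection of transversal simple closed geodesics $\eta_1,\dots,\eta_s$ as in \eqref{FNML} one has $\iota(\eta_i,\gamma)=\iota(\eta_i,\pi_{\epsilon,\gamma_0}(\gamma))$ for every $\gamma\in\CS_{\gamma_0}^{\epsilon}$ carried by a radalla transversal to all $\eta_i$ (and one can always choose the $\eta_i$ in general position with finitely many almost-geodesic maximal radallas, or alternatively absorb the finitely many bad ones into a negligible set). Hence $\frac1L\gamma$ and $\frac1L\gamma'$ have the same intersection numbers with the $\eta_i$ up to error $O(1/L)$; since $\gamma'$ is a measured lamination, both $\frac1L\gamma$ and $\frac1L\gamma'$ accumulate only on $\CM\CL(\Sigma)$, and on $\CM\CL(\Sigma)$ the coordinates $(\iota(\cdot,\eta_i))_i$ separate points, so any weak-$*$ accumulation point of $\nu_{\gamma_0}^L$ and of $\mu_{\epsilon,\gamma_0}^L$ must assign equal mass to every set of the form $\{\lambda:\ \iota(\lambda,\eta_i)\le a_i\}$, and these generate the Borel $\sigma$-algebra of $\CM\CL(\Sigma)$.

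The cleanest way to package this is: fix $f\in C_c(\CC(\Sigma))$ and estimate $\big|\int f\,d\nu_{\gamma_0}^L-\int f\,d\mu_{\epsilon,\gamma_0}^L\big|$ by $\big|f(\tfrac1L\gamma)-f(\tfrac1L\gamma')\big|$ summed over the relevant (finitely many, for each $L$) terms; uniform continuity of $f$ on the compact set $\{\mu\in\CC_K(\Sigma):\iota(\lambda_0,\mu)\le c\}$ together with the fact that $\tfrac1L\gamma$ and $\tfrac1L\gamma'$ converge to each other (their intersection pairings with a separating family agree up to $O(1/L)$, and on the relevant compact set the pairings with $\eta_1,\dots,\eta_s$ together with $\lambda_0$ give an embedding into Euclidean space) makes each term $o(1)$, while the number of terms is $O(L^{6g-6+2r})$ times the $\mu_{\epsilon,\gamma_0}^L$-mass of the compact set, which is bounded by Proposition \ref{prop-key-map} and Corollary \ref{poly-growth}. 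The main obstacle I anticipate is exactly this last uniformity: one must guarantee that the metric closeness of $\frac1L\gamma$ to $\frac1L\pi_{\epsilon,\gamma_0}(\gamma)$ in the topology of $\CC(\Sigma)$ is genuinely uniform over the (varying, as $L$ grows) family of curves involved, not merely pointwise; this is where Lemma \ref{transverse-inter} combined with the fact that only finitely many radallas are in play (Lemma \ref{finite}) does the real work, and where one has to be a little careful to ensure the chosen separating curves $\eta_i$ are transversal to all of them, handling the finitely many exceptions as an extra negligible contribution.
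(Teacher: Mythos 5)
Your proposal is correct and follows essentially the same route as the paper: truncate to the generic set $\CS_{\gamma_0}^\epsilon$ using Proposition \ref{generic}, view $\mu_{\epsilon,\gamma_0}^L$ as the push-forward of the truncated measure under $\frac1L\gamma\mapsto\frac1L\pi_{\epsilon,\gamma_0}(\gamma)$, and show this map moves points by $o(1)$ uniformly via Lemma \ref{finite}, Lemma \ref{transverse-inter}, the vanishing of rescaled self-intersection, and the separating family \eqref{FNML}. The uniformity issue you flag at the end is exactly the paper's Fact \ref{doesnotmove}, which it proves by the same compactness-and-contradiction argument you sketch.
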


Note that it follows from \eqref{curvesascurrents} and \eqref{curvesascurrents2} that the measures $\nu_{\gamma_0}^L$ and $\mu_{\epsilon,\gamma_0}^L$ are all supported by $\CC_K(\Sigma)$ for some fixed compact set $K\subset PT\Sigma$ -- we will use this fact a number of times in the following pages. 

\begin{proof}
It will be convenient to consider the restriction of the measure $\nu_{\gamma_0}^L$ to the set $\frac 1L\CS^\epsilon_{\gamma_0}$:
\begin{equation}
\nu_{\epsilon,\gamma_0}^L=\frac 1{L^{6g-6+2r}}\sum_{\gamma\in\CS^\epsilon_{\gamma_0}}\delta_{\frac 1L\gamma}\label{eq-measure-nuL}
\end{equation}
Since the set $\CS_{\gamma_0}\setminus\CS_{\gamma_0}^\epsilon$ is negligible by Proposition \ref{generic}, we have that 
$$\lim_{n\to\infty}\frac 1{L_n^{6g-6+2r}}\vert\{\gamma\in\CS_{\gamma_0}\setminus\CS_{\gamma_0}^\epsilon\vert\iota(\lambda_\Sigma,\gamma)\le C\cdot L_n\}\vert=0$$
for all $C>0$. Here $\lambda_\Sigma$ is the Liouville current of the hyperbolic metric on $\Sigma$ and we remind the reader that $\ell_\Sigma(\gamma)=\iota(\lambda_\Sigma,\gamma)$ for every multicurve $\gamma$. By the very definition of the measures $\nu_{\gamma_0}^L$ and $\nu_{\epsilon,\gamma_0}^L$, this means that the difference
$$\nu_{\gamma_0}^{L_n}(\{\lambda\in\CC_K(\Sigma)\vert\iota(\lambda_\Sigma,\lambda)\le C\})-\nu_{\epsilon,\gamma_0}^{L_n}(\{\lambda\in\CC_K(\Sigma)\vert\iota(\lambda_\Sigma,\lambda)\le C\})\to 0$$
of measures tends to $0$ when $n$ grows. Since, varying $C$, the sets $\{\lambda\in\CC_K(\Sigma)\vert\iota(\lambda_\Sigma,\lambda)\le C\}$ form a compact exhaustion of $\CC_K(\Sigma)$ we deduce that whenever one of the limits 
$$\lim_{n\to\infty}\nu_{\epsilon,\gamma_0}^{L_n}\text{ and }\lim_{n\to\infty}\nu_{\gamma_0}^{L_n}$$
exists, then the other also exists and both agree.

It follows that it suffices to prove that $\lim_{n\to\infty}\nu_{\epsilon,\gamma_0}^{L_n}=\mu$ if and only if $\lim_{n\to\infty}\mu_{\epsilon,\gamma_0}^{L_n}=\mu$. Note that $\mu_{\epsilon,\gamma_0}^L$ is the push-forward of the measure $\nu_{\epsilon,\gamma_0}^L$ under the map
$$\pi_{\epsilon,\gamma_0}^L:\frac 1L\CS_{\gamma_0}^\epsilon\to\CM\CL(\Sigma),\ \ \frac 1L\gamma\mapsto\frac 1L\pi_{\epsilon,\gamma_0}(\gamma).$$
In particular, the desired result follows easily once we prove that $\pi^L_{\epsilon,\gamma_0}$, when $L$ is large, almost does not move points. More precisely, fixing for the sake of concreteness a distance $d:\CC_K(\Sigma)\times\CC_K(\Sigma)\to\BR_+$ inducing the topology of $\CC_K(\Sigma)$ (see \cite{Chris-moon-kasra} for a concrete choice), we prove:

\begin{fact}\label{doesnotmove}
For all $C,\rho>0$ there is $L_0$ with 
$$d\left(\frac 1L\gamma,\pi_{\epsilon,\gamma_0}^L\left(\frac 1L\gamma\right)\right)\le\rho$$
for every $\gamma\in\CS^\epsilon_{\gamma_0}(\Sigma)$ with $\iota(\lambda_\Sigma,\gamma)\le C\cdot L$ and for every $L\ge L_0$.
\end{fact}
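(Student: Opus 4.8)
The plan is to exploit the Fenchel–Nielsen-type coordinates for measured laminations and currents. Recall from \eqref{FNML} that there is a finite collection of simple closed curves $\eta_1,\dots,\eta_s$ such that a measured lamination is determined by its intersection numbers with the $\eta_i$; more generally, the functions $\lambda\mapsto\iota(\lambda,\eta_i)$ together with a controlled finite amount of extra data separate points of $\CC_K(\Sigma)$, and (choosing the metric $d$ appropriately, as in \cite{Chris-moon-kasra}) two currents in $\CC_K(\Sigma)$ that are $d$-close are exactly those whose intersection numbers with a suitable finite family of test curves are close. So it suffices to show: for every simple closed geodesic $\eta$ and every $C,\rho>0$ there is $L_0$ so that $\left\vert\iota\!\left(\eta,\tfrac1L\gamma\right)-\iota\!\left(\eta,\tfrac1L\pi_{\epsilon,\gamma_0}(\gamma)\right)\right\vert\le\rho$ for all $\gamma\in\CS^\epsilon_{\gamma_0}$ with $\iota(\lambda_\Sigma,\gamma)\le C\cdot L$ and all $L\ge L_0$. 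By bilinearity of $\iota$ this is the same as $\left\vert\iota(\eta,\gamma)-\iota(\eta,\pi_{\epsilon,\gamma_0}(\gamma))\right\vert\le\rho L$.

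First I would dispose of the case that $\eta$ is transversal to the ambient radalla $\hat\tau$ carrying $\gamma$: there Lemma \ref{transverse-inter} gives $\iota(\eta,\pi_{\epsilon,k}(\gamma))=\iota(\eta,\gamma)$ exactly, so the difference is $0$. The general case is handled by reducing to finitely many radallas. By Lemma \ref{finite}, all but finitely many multicurves in $\CS_k$ are carried by one of finitely many $\epsilon$-geodesic radallas $\hat\tau_1,\dots,\hat\tau_n$; the finitely many exceptional short curves contribute nothing once $L$ is large (their length is bounded, so $\iota(\lambda_\Sigma,\gamma)\le CL$ forces them out, or at worst finitely many of them lie in $\CS^\epsilon_{\gamma_0}$ and contribute a uniformly bounded error). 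For each $\hat\tau_i$ and each simple closed geodesic $\eta$ that is \emph{not} transversal to $\hat\tau_i$, the failure of transversality means $\eta$ can be homotoped to share arcs with $\phi(\hat\tau_i)$; after such a homotopy $\eta$ runs through the bands of a thickening of $\hat\tau_i$ a bounded number of times, hence $\iota(\eta,\gamma)$ and $\iota(\eta,\pi_{\hat\tau_i}(\gamma))$ differ from the ``weighted band-crossing counts'' by an amount bounded in terms of $\eta$ and $\hat\tau_i$ only — in particular by a constant independent of $\gamma$. Then I would compare the two band-crossing counts using the defining relation of $\pi_{\hat\tau_i}$: the map $\pi_{\hat\tau_i}$ changes the weight vector only on the edges $E(\hat\tau)\setminus E(\tau)$, each of which carries weight at most $k$ (since $\iota(e,e)\ge1$ and $\iota(\gamma,\gamma)=k$), and it redistributes that weight along the bounded-length loops $\tilde e\subset\tau$. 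Since there are boundedly many such edges and each has weight $\le k$, the total change in $\iota(\eta,\cdot)$ is bounded by a constant depending only on $\eta$, $\hat\tau_i$ and $k$ — again independent of $\gamma$ and of $L$. Taking the maximum of this constant over the finitely many pairs $(\hat\tau_i,\eta_j)$ and over the finite exceptional set, we get a uniform bound $M=M(\epsilon,k,\{\eta_j\})$ with $\left\vert\iota(\eta_j,\gamma)-\iota(\eta_j,\pi_{\epsilon,\gamma_0}(\gamma))\right\vert\le M$ for all relevant $\gamma$; choosing $L_0>M/\rho$ (adjusted by the modulus of continuity of $d$ in these coordinates) finishes the argument.

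The main obstacle I anticipate is the non-transversal case: one has to be careful that homotoping $\eta$ into a position adapted to $\hat\tau_i$ does not change $\iota(\eta,\gamma)$ (it does not, since $\gamma$ is carried by $\hat\tau_i$ and $\eta$ is a geodesic, so intersection numbers are realized), and one has to verify that the ``shared arcs'' of $\eta$ with the radalla contribute the \emph{same} amount to $\iota(\eta,\gamma)$ and to $\iota(\eta,\pi_{\epsilon,\gamma_0}(\gamma))$ — this is where the explicit description of $\pi_{\hat\tau}$ via the loops $\tilde e$, and the fact that resolving small self-intersections keeps $\gamma$ and $\pi_{\epsilon,\gamma_0}(\gamma)$ almost parallel while resolving large ones only removes loops around punctures (which $\eta$, being a fixed compact curve, meets boundedly often), is used. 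Once the bound $M$ is seen to be independent of $\gamma$ and $L$, the conclusion is immediate from the scaling $\iota(\tfrac1L\eta,\tfrac1L\gamma)$ and the compatibility of $d$ with these coordinate functions on the fixed compact set $\CC_K(\Sigma)$.
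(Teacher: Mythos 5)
There is a genuine gap at the very first step of your reduction. You claim that, for a suitable choice of the metric $d$ on $\CC_K(\Sigma)$, two currents are $d$-close exactly when their intersection numbers with a finite family of test curves $\eta_1,\dots,\eta_s$ are close. This is false: finitely many intersection-number functionals do not induce the weak-$*$ topology on the infinite-dimensional space $\CC_K(\Sigma)$; the property \eqref{FNML} holds only on the subspace $\CM\CL(\Sigma)$ of measured laminations. Your own argument exposes the problem: in the transversal case Lemma \ref{transverse-inter} gives \emph{exact} equality $\iota(\eta_i,\gamma)=\iota(\eta_i,\pi_{\epsilon,\gamma_0}(\gamma))$ for all $i$, so your criterion would force $d\bigl(\frac 1L\gamma,\frac 1L\pi_{\epsilon,\gamma_0}(\gamma)\bigr)=0$ for every $L$, i.e.\ that $\gamma$ and $\pi_{\epsilon,\gamma_0}(\gamma)$ coincide as currents --- which they do not, since one has $k>0$ self-intersections and the other is simple. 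So the quantitative bound $M$ you produce in the non-transversal case, while plausibly correct, does not yield the conclusion about $d$.

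The missing idea is a compactness argument that lets you apply \eqref{FNML} where it is actually valid. Argue by contradiction: take sequences $L_n\to\infty$ and $\gamma_n$ with $d\bigl(\frac 1{L_n}\gamma_n,\frac 1{L_n}\pi_{\epsilon,\gamma_0}(\gamma_n)\bigr)\ge\rho$; both rescaled sequences live in the compact set $\{\lambda\in\CC_K(\Sigma)\mid\iota(\lambda_\Sigma,\lambda)\le C\}$, so after passing to a subsequence they converge to currents $\mu$ and $\mu'$. The limit $\mu'$ is a measured lamination because it is a limit of (rescaled) simple multicurves, and --- this is the crucial observation you do not use --- so is $\mu$, because $\iota(\mu,\mu)=\lim_n\iota(\gamma_n,\gamma_n)/L_n^2=\lim_n k/L_n^2=0$. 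After a further subsequence all $\gamma_n$ are carried by a single radalla $\hat\tau$ from the finite list of Lemma \ref{finite}, and one may \emph{choose} the collection $\eta_1,\dots,\eta_s$ satisfying \eqref{FNML} to be transversal to $\hat\tau$ (the collection is far from unique), so your non-transversal case never arises. Lemma \ref{transverse-inter} and continuity of the intersection form then give $\iota(\mu,\eta_i)=\iota(\mu',\eta_i)$ for all $i$, hence $\mu=\mu'$ by \eqref{FNML} applied to measured laminations, contradicting $d\ge\rho$. This is exactly how the paper proceeds.
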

\begin{proof}
We argue by contradiction: Suppose there exist $C,\rho$ positive, a sequence $L_n\to\infty$, and a sequence $\gamma_n\in\CS^\epsilon_{\gamma_0}$ with $\iota(\lambda_\Sigma,\gamma)\le C\cdot L_n$ satisfying $d\left(\frac 1{L_n}\gamma,\pi_{\epsilon,\gamma_0}^{L_n}\left(\frac 1{L_n}\gamma\right)\right)\ge\rho$. Both sequences $\left(\frac 1{L_n}\gamma_n\right)$ and $\left(\pi_{\epsilon,\gamma_0}^{L_n}\left(\frac 1{L_n}\gamma_n\right)\right)=\left(\frac 1{L_n}\pi_{\epsilon,\gamma_0}\left(\gamma_n\right)\right)$ are contained in the compact set $\{\lambda\in\CC_K(\Sigma)\vert\iota(\lambda_\Sigma,\lambda)\le C\}$. It follows, by passing to a subsequence, that we can assume that they converge: 
$$\frac 1{L_n}\gamma_n\to\mu,\ \ \pi_{\epsilon,\gamma_0}^{L_n}\left(\frac 1{L_n}\gamma_n\right)=\frac 1{L_n}\pi_{\epsilon,\gamma_0}\left(\gamma_n\right)\to\mu'.$$
By construction, $\mu'$ is a limit of simple curves and hence it is a measured lamination. On the other hand, by continuity of the intersection form we have
$$\iota(\mu,\mu)=\lim_{n\to\infty}\iota\left(\frac 1{L_n}\gamma_n,\frac 1{L_n}\gamma_n\right)=\lim_{n\to\infty}\frac{\iota(\gamma_n,\gamma_n)}{L_n^2}=\lim_{n\to\infty}\frac{\iota(\gamma_0,\gamma_0)}{L_n^2}=0,$$
proving that also $\mu\in\CM\CL(\Sigma)$. 

Recall that by Lemma \ref{finite} and the definition of $\CS^\epsilon_{\gamma_0}$, there is a finite set of $\epsilon$-geodesic radallas carrying all curves in $\CS^\epsilon_{\gamma_0}$. In particular, by passing to a further subsequence, we can assume there is a fixed $\epsilon$-geodesic radalla $\hat\tau$ with $\gamma_n\prec\hat\tau$ for all $n$. Let $\tau\subset\hat\tau$ be the underlying train-track, and $\eta_1,\dots,\eta_{s}$ curves transversal to $\tau$ and satisfying \eqref{FNML}. Since $\eta_i$ is transversal to $\hat\tau$ we get from Lemma \ref{transverse-inter} that
$$\iota(\gamma_n,\eta_i)=\iota(\pi_{\epsilon,\gamma_0}(\gamma_n),\eta_i)$$
for all $n$. The continuity of the intersection form implies that
$$\iota(\mu,\eta_i)=\iota(\mu',\eta_i).$$
Since $i$ was arbitrary we have $\mu=\mu'$ by \eqref{FNML}. But this implies that $\frac 1{L_n}\gamma_n$ and $\pi_{\epsilon,\gamma_0}^{L_n}(\frac 1{L_n}\gamma_n)$ have the same limit and hence that their distance tends to $0$, contradicting our assumption. We have proved Fact \ref{doesnotmove}.
\end{proof}

Now, suppose that $f:\CC_K(\Sigma)\to\BR$ is continuous with compact support, and note that there is some $C$ such that the interior of the compact set $Z=\{\lambda\in\CC_K(\Sigma)\vert\iota(\lambda_\Sigma,\lambda)\le C\}$ contains the support of $f$. Note also that $f$ is uniformly continuous, meaning that for all $\epsilon_0$ there is $\rho$ with $\vert f(\alpha)-f(\alpha')\vert\le\epsilon_0$ whenever $d(\alpha,\alpha')\le\rho$. This means that with $L_0$ as in Fact \ref{doesnotmove} and for all $L_n\ge L_0$ and all $\gamma\in\CS_{\gamma_0}^\epsilon$ we have 
$$\left\vert f\left(\frac 1{L_n}\gamma\right)-f\left(\pi_{\epsilon,\gamma_0}^{L_n}\left(\frac 1{L_n}\gamma\right)\right)\right\vert\le\epsilon_0$$
Since $\mu_{\epsilon,\gamma_0}^L$ is the push-forward of the measure $\nu_{\epsilon,\gamma_0}^L$ under $\pi_{\epsilon,\gamma_0}^L$, this implies that
\begin{align*}
&\left\vert\int_{\CC_K(\Sigma)}f(\lambda)d\mu_{\epsilon,\gamma_0}^{L_n}(\lambda)-\int_{\CC_K(\Sigma)}f(\lambda)d\nu_{\epsilon,\gamma_0}^{L_n}(\lambda)\right\vert\le\epsilon_0\nu_{\epsilon,\gamma_0}^{L_n}(Z)\ \ \text{and}\\
&\left\vert\int_{\CC_K(\Sigma)}f(\lambda)d\mu_{\epsilon,\gamma_0}^{L_n}(\lambda)-\int_{\CC_K(\Sigma)}f(\lambda)d\nu_{\epsilon,\gamma_0}^{L_n}(\lambda)\right\vert\le\epsilon_0\mu_{\epsilon,\gamma_0}^{L_n}(Z)
\end{align*}
for any $\epsilon_0$ small enough. If $\lim_{n\to\infty}\nu_{\epsilon,\gamma_0}^{L_n}=\mu$, we get that $\nu_{\epsilon,\gamma_0}^{L_n}(Z)$ is bounded independently of $n$. Since $\epsilon_0$ was arbitrary we get thus that 
$$\lim_{n\to\infty}\left(\int_{\CC_K(\Sigma)}f(\lambda)d\mu_{\epsilon,\gamma_0}^{L_n}(\lambda)-\int_{\CC_K(\Sigma)}f(\lambda)d\nu_{\epsilon,\gamma_0}^{L_n}(\lambda)\right)=0$$
which implies, because $f$ was arbitrary, that also $\lim_{n\to\infty}\mu_{\epsilon,\gamma_0}^{L_n}=\mu$.

The same argument proves also that if $\lim_{n\to\infty}\mu_{\epsilon,\gamma_0}^{L_n}=\mu$ then we also have $\lim_{n\to\infty}\nu_{\epsilon,\gamma_0}^{L_n}=\mu$.
\end{proof}

We are finally ready to prove Proposition \ref{prop-sublimit}:

\begin{proof}[Proof of Proposition \ref{prop-sublimit}]
To begin we prove that the sequence $(\mu_{\epsilon,\gamma_0}^{L_n})_n$ has a convergent subsequence. In order to do so it suffices to prove that for every compact set $K\subset\CM\CL(\Sigma)$ with $\mu_{\Thu}(\D K)=0$ the sequence $(\mu_{\epsilon,\gamma_0}^{L_n}(K))_n$ is bounded. 

Note that
\begin{align*}
\mu_{\epsilon,\gamma_0}^{L_n}(K)
&=\frac 1{L_n^{6g-6+2r}}\sum_{\tiny
\CM\CL_\BZ\cap L_n\cdot K}\vert\pi_{\epsilon,\gamma_0}^{-1}(\lambda)\vert\\
&\le\frac \kappa{L_n^{6g-6+2r}}\vert \CM\CL_\BZ\cap L_n\cdot K\vert
\end{align*}
where $\kappa$ is the constant provided by Proposition \ref{prop-key-map}. Since the last quantity converges to $\kappa\cdot\mu_{\Thu}(K)$ when $n\to\infty$, it follows that our original sequence is bounded, as we needed to prove. 
At this point we know that the sequence of measures $(\mu_{\epsilon,\gamma_0}^{L_{n}})_n$ contains a subsequence $(\mu_{\epsilon,\gamma_0}^{L_{n_i}})_i$ which converges to some measure $\mu$. Moreover, since $\mu(K)\le\kappa\cdot\mu_{\Thu}(K)$ for every $K$ we deduce that the limit $\mu$ is absolutely continuous with respect to the Thurston measure, with Radon-Nikodym derivative $\frac{d\mu}{d\mu_{\Thu}}$ bounded by $\kappa$. 

Note at this point that it follows from Lemma \ref{alllimitsthesame} that we also have
$$\lim_{i\to\infty}\nu_{\gamma_0}^{L_{n_i}}=\mu.$$
This means that it only remains to be proved that the Radon-Nikodym derivative $\frac{d\mu}{d\mu_{\Thu}}$ is essentially constant. To that end note that $\nu_{\gamma_0}^{L_{n_i}}$ is invariant under the mapping class group, since for all $U\subset\CC(\Sigma)$ and all $\phi\in\Map(\Sigma)$ we have
\begin{align*}
\phi_*\nu_{\gamma_0}^{L_{n_i}}(U)
&=\nu_{\gamma_0}^{L_{n_i}}(\phi(U))=\frac 1{L_{n_i}^{6g-6+2r}}\vert\CS_{\gamma_0}\cap\phi({L_{n_i}}\cdot U)\vert\\
&=\frac 1{L_{n_i}^{6g-6+2r}}\vert\phi^{-1}\CS_{\gamma_0}\cap {L_{n_i}}\cdot U\vert=\frac 1{L_{n_i}^{6g-6+2r}}\vert\CS_{\gamma_0}\cap {L_{n_i}}\cdot U\vert\\
&=\nu_{\gamma_0}^{L_{n_i}}(U).
\end{align*}
This implies that the measure $\mu$ is also invariant under the mapping class group. Hence, the Radon-Nikodym derivative $\frac{d\mu}{d\mu_{\Thu}}$ is a mapping class group invariant measurable function on $\CM\CL(\Sigma)$. Since the Thurston measure $\mu_{\Thu}$ is ergodic with respect to the mapping class group action \cite{Masur}, it follows that $\frac{d\mu}{d\mu_{\Thu}}$ is essentially constant as we needed to prove.
\end{proof}

\subsection{Limits of measures and counting}\label{subsec-counting}
After the hard work of proving Proposition \ref{prop-sublimit} we can reap some of its consequences. They are all based on the following simple observation:

\begin{prop}\label{prop-reduction}
Let $(L_n)_n$ be a sequence with $\lim_{n\to\infty}\nu_{\gamma_0}^{L_n}=\alpha\cdot\mu_{\Thu}$ for some $\alpha\in\BR_+$. Then
$$\lim_{n\to\infty}\frac{\vert\{\gamma\in\CS_{\gamma_0}\vert\iota(\lambda_0,\gamma)\le L_n\}\vert}{{L_n}^{6g-6+2r}}=\alpha\cdot\mu_{\Thu}(\{\lambda\in\CM\CL(\Sigma)\vert\iota(\lambda_0,\lambda)\le 1\})$$
for every filling current $\lambda_0\in\CC(\Sigma)$.
\end{prop}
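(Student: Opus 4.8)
The plan is to translate the weak-* convergence $\nu_{\gamma_0}^{L_n}\to\alpha\cdot\mu_{\Thu}$ into the asserted counting statement by testing the measures against (approximations of) the indicator function of the set $B_{\lambda_0}=\{\lambda\in\CC_K(\Sigma)\mid\iota(\lambda_0,\lambda)\le 1\}$. Observe first that, by the scaling property of the intersection form,
$$\frac{1}{L_n^{6g-6+2r}}\vert\{\gamma\in\CS_{\gamma_0}\mid\iota(\lambda_0,\gamma)\le L_n\}\vert=\nu_{\gamma_0}^{L_n}(B_{\lambda_0}),$$
so what we must show is that $\nu_{\gamma_0}^{L_n}(B_{\lambda_0})\to\alpha\cdot\mu_{\Thu}(B_{\lambda_0}\cap\CM\CL(\Sigma))$. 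Since $\lambda_0$ is filling, the set $B_{\lambda_0}$ is compact inside $\CC_K(\Sigma)$ by the facts on the intersection form listed earlier, so the issue is purely a boundary-mass issue: weak-* convergence gives convergence on a set provided the limit measure assigns zero mass to its topological boundary.

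First I would record that the limit measure $\alpha\cdot\mu_{\Thu}$ is supported on $\CM\CL(\Sigma)$, so the relevant boundary is $\partial B_{\lambda_0}\cap\CM\CL(\Sigma)$, which is contained in $\{\lambda\in\CM\CL(\Sigma)\mid\iota(\lambda_0,\lambda)=1\}$ by continuity of $\iota(\lambda_0,\cdot)$. By the scaling property $\mu_{\Thu}(L\cdot A)=L^{6g-6+2r}\mu_{\Thu}(A)$, the set $\{\iota(\lambda_0,\lambda)=1\}$ meets none of its own rescalings, hence has Thurston measure zero; this is exactly the computation recorded in the subsection on measured laminations. Therefore $\alpha\cdot\mu_{\Thu}(\partial B_{\lambda_0})=0$. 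Now I would apply the standard portmanteau-type argument: sandwich $\mathbf 1_{B_{\lambda_0}}$ between continuous compactly supported functions $f_-\le\mathbf 1_{B_{\lambda_0}}\le f_+$ supported in a fixed compact neighbourhood of $B_{\lambda_0}$ with $\int(f_+-f_-)\,d\mu_{\Thu}$ arbitrarily small (possible precisely because the $\mu_{\Thu}$-mass concentrated near $\partial B_{\lambda_0}$ is zero). Taking $n\to\infty$ in $\int f_-\,d\nu_{\gamma_0}^{L_n}\le\nu_{\gamma_0}^{L_n}(B_{\lambda_0})\le\int f_+\,d\nu_{\gamma_0}^{L_n}$ and using $\nu_{\gamma_0}^{L_n}\to\alpha\mu_{\Thu}$ gives
$$\alpha\!\int\! f_-\,d\mu_{\Thu}\le\liminf_n\nu_{\gamma_0}^{L_n}(B_{\lambda_0})\le\limsup_n\nu_{\gamma_0}^{L_n}(B_{\lambda_0})\le\alpha\!\int\! f_+\,d\mu_{\Thu},$$
and letting the gap shrink to zero yields $\lim_n\nu_{\gamma_0}^{L_n}(B_{\lambda_0})=\alpha\cdot\mu_{\Thu}(B_{\lambda_0})$, which is the claim.

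The one point that needs genuine care — and which I expect to be the main obstacle — is the uniform tightness needed to justify passing from weak-* convergence against $C_c$ functions to convergence on the compact set $B_{\lambda_0}$: a priori some mass of $\nu_{\gamma_0}^{L_n}$ could escape to infinity in $\CC_K(\Sigma)$ or concentrate just outside $B_{\lambda_0}$. This is handled by the comparison of filling currents: by the last bullet on the intersection form there is $M$ with $\frac1M\iota(\lambda_\Sigma,\eta)\le\iota(\lambda_0,\eta)\le M\iota(\lambda_\Sigma,\eta)$ for all $\eta\in\CC_K(\Sigma)$, so the sets $\{\iota(\lambda_0,\cdot)\le C\}$ form a compact exhaustion and one may choose the cutoff functions $f_\pm$ supported inside a single such compact set; the convergence of $\nu_{\gamma_0}^{L_n}$ on that set to a finite value (bounded via Proposition \ref{prop-key-map} exactly as in the proof of Proposition \ref{prop-sublimit}) then supplies the required uniform bound on $\nu_{\gamma_0}^{L_n}(Z)$ appearing implicitly in the sandwich. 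With that in hand the portmanteau argument above goes through verbatim, completing the proof.
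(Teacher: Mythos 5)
Your proposal is correct and follows essentially the same route as the paper: rewrite the counting quantity as $\nu_{\gamma_0}^{L_n}(\{\lambda\mid\iota(\lambda_0,\lambda)\le 1\})$ via the scaling of the intersection form, then pass to the limit using weak-$*$ convergence together with the fact that $\mu_{\Thu}(\{\iota(\lambda_0,\lambda)=1\})=0$ and that $\mu_{\Thu}$ is supported on $\CM\CL(\Sigma)$. The paper compresses the portmanteau and tightness details that you spell out, but the underlying argument is identical.
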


\begin{proof}
We have that
\begin{align*}
\lim_{n\to\infty}\frac{\vert\{\gamma\in\CS_{\gamma_0}\vert\iota(\lambda_0,\gamma)\le L_n\}\vert}{{L_n}^{6g-6+2r}}
&=\lim_{n\to\infty}\nu_{\gamma_0}^{L_n}(\{\lambda\in\CC_K(\Sigma)\vert\iota(\lambda_0,\lambda)\le1\})\\
&=\alpha\cdot\mu_{\Thu}(\{\lambda\in\CC_K(\Sigma)\vert\iota(\lambda_0,\lambda)\le1\})\\
&=\alpha\cdot\mu_{\Thu}(\{\lambda\in\CM\CL(\Sigma)\vert\iota(\lambda_0,\lambda)\le1\}).
\end{align*}
Here the first equality follows from the very definition of the measures $\nu_{\gamma_0}^L$. The second equality follows from the assumption in the proposition because as we noted earlier $\mu_{\Thu}(\{\lambda\in\CC_K(\Sigma)\vert\iota(\lambda_0,\lambda)=1\})=0$. Finally, the last equality holds because $\mu_{\Thu}$ is supported by $\CM\CL(\Sigma)$.
\end{proof}

We will later prove that the limit $\lim_{L\to\infty}\nu_{\gamma_0}^L$ actually exists if $\Sigma$ is a once-punctured torus, which in light of Proposition \ref{prop-reduction} will mean that the corresponding limit $\lim_{L\to\infty}\frac{\vert\{\gamma\in\CS_{\gamma_0}\vert\iota(\lambda_0,\gamma)\le L\}\vert}{{L}^{2}}$ also exists. For a general surface we only prove that the following weaker statement:

\begin{kor}\label{kor-ratio}
Let $\Sigma$ be a hyperbolic surface of finite area, and let $\lambda_1,\lambda_2\in\CC(\Sigma)$ be filling currents. Then we have
$$\lim_{L\to\infty}\frac{\vert\{\gamma\in\CS_{\gamma_0}\vert\iota(\lambda_1,\gamma)\le L\}\vert}{\vert\{\gamma\in\CS_{\gamma_0}\vert\iota(\lambda_2,\gamma)\le L\}\vert}=\frac{\mu_{\Thu}(\{\lambda\in\CM\CL(\Sigma)\vert\iota(\lambda_1,\lambda)\le 1\})}{\mu_{\Thu}(\{\lambda\in\CM\CL(\Sigma)\vert\iota(\lambda_2,\lambda)\le 1\})}$$
for every multicurve $\gamma_0$ in $\Sigma$. Here $\mu_{\Thu}$ is as always the Thurston measure on the space of measured laminations $\CM\CL(\Sigma)$.
\end{kor}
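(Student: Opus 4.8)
The plan is to deduce Corollary \ref{kor-ratio} from Proposition \ref{prop-sublimit} and Proposition \ref{prop-reduction} by the elementary observation that, although we do not know that $\lim_{L\to\infty}\nu_{\gamma_0}^L$ exists, every subsequential limit is a \emph{positive} multiple of the Thurston measure, and this positive multiplicative constant (as well as the normalizing power $L^{6g-6+2r}$) cancels once we pass to the ratio of the two counting functions.

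First I would invoke the standard subsequence principle: to prove that $\frac{\vert\{\gamma\in\CS_{\gamma_0}\vert\iota(\lambda_1,\gamma)\le L\}\vert}{\vert\{\gamma\in\CS_{\gamma_0}\vert\iota(\lambda_2,\gamma)\le L\}\vert}$ converges as $L\to\infty$ to the asserted value, it suffices to show that every sequence $L_n\to\infty$ admits a subsequence along which this ratio converges to that value; since the value does not depend on the subsequence, the full limit then exists and equals it. So fix $L_n\to\infty$. By Proposition \ref{prop-sublimit} there is a subsequence $(L_{n_i})_i$ and some $\alpha>0$ with $\nu_{\gamma_0}^{L_{n_i}}\to\alpha\cdot\mu_{\Thu}$ in the weak-$*$-topology. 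Applying Proposition \ref{prop-reduction} to the filling current $\lambda_j$ for $j=1,2$ then gives
$$\lim_{i\to\infty}\frac{\vert\{\gamma\in\CS_{\gamma_0}\vert\iota(\lambda_j,\gamma)\le L_{n_i}\}\vert}{{L_{n_i}}^{6g-6+2r}}=\alpha\cdot\mu_{\Thu}(\{\lambda\in\CM\CL(\Sigma)\vert\iota(\lambda_j,\lambda)\le 1\}).$$
Here I would also record that the right-hand side is a strictly positive real: $\alpha>0$ by Proposition \ref{prop-sublimit}, while $\mu_{\Thu}(\{\lambda\vert\iota(\lambda_j,\lambda)\le 1\})$ is positive because this set contains the non-empty open set $\{\lambda\in\CM\CL(\Sigma)\vert\iota(\lambda_j,\lambda)<1\}$ (continuity of the intersection form on $\CC_K(\Sigma)$ together with positivity of $\mu_{\Thu}$ on non-empty open sets) and finite because $\lambda_j$ is filling, so that $\{\iota(\lambda_j,\cdot)\le 1\}$ is compact. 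In particular the denominator $\vert\{\gamma\in\CS_{\gamma_0}\vert\iota(\lambda_2,\gamma)\le L_{n_i}\}\vert$ is eventually nonzero along the subsequence, so the ratio is well defined for $i$ large.

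Dividing the $j=1$ relation by the $j=2$ relation, the factors $\alpha$ and ${L_{n_i}}^{6g-6+2r}$ cancel and we obtain
$$\lim_{i\to\infty}\frac{\vert\{\gamma\in\CS_{\gamma_0}\vert\iota(\lambda_1,\gamma)\le L_{n_i}\}\vert}{\vert\{\gamma\in\CS_{\gamma_0}\vert\iota(\lambda_2,\gamma)\le L_{n_i}\}\vert}=\frac{\mu_{\Thu}(\{\lambda\in\CM\CL(\Sigma)\vert\iota(\lambda_1,\lambda)\le 1\})}{\mu_{\Thu}(\{\lambda\in\CM\CL(\Sigma)\vert\iota(\lambda_2,\lambda)\le 1\})},$$
which is independent of the subsequence; the subsequence principle then yields the claimed limit. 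Honestly there is no real obstacle here beyond bookkeeping: the entire difficulty is absorbed into Proposition \ref{prop-sublimit} and Proposition \ref{prop-reduction}. The only point that requires a moment's care is the one I flagged above, namely checking that the normalizing constants are positive and finite so that the ratio is genuinely a well-defined positive real whose limit one may compute term by term — and this is exactly where the hypothesis that $\lambda_1,\lambda_2$ are filling gets used.
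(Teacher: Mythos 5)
Your proposal is correct and follows essentially the same route as the paper: extract a subsequence along which $\nu_{\gamma_0}^{L_n}$ converges to $\alpha\cdot\mu_{\Thu}$ via Proposition \ref{prop-sublimit}, apply Proposition \ref{prop-reduction} to each of $\lambda_1,\lambda_2$, and take the quotient so that $\alpha$ and the normalization cancel. Your extra remarks on positivity and finiteness of the Thurston measures of the unit balls are a reasonable bit of added care but do not change the argument.
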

\begin{proof}
It suffices to prove that every sequence $(L_n)$ has a subsequence $(L_{n_i})$ for which the claimed equality holds. Well, from Proposition \ref{prop-sublimit} we know that $(L_n)$ has a subsequence for which $\nu_{\gamma_0}^{L_{n_i}}$ converges to $\alpha\cdot\mu_{\Thu}$ for some $\alpha>0$. Thus by Proposition \ref{prop-reduction} we get that
$$\lim_{i\to\infty}\frac {\vert\{\gamma\in\CS_{\gamma_0}\vert\iota(\lambda_1,\gamma)\le L_{n_i}\}\vert}{{L_{n_i}}^{6g-6+2r}}=\alpha\cdot\mu_{\Thu}(\{\lambda\in\CM\CL(\Sigma)\vert\iota(\lambda_1,\lambda)\le 1\})$$
and that
$$\lim_{i\to\infty}\frac {\vert\{\gamma\in\CS_{\gamma_0}\vert\iota(\lambda_2,\gamma)\le L_{n_i}\}\vert}{{L_{n_i}}^{6g-6+2r}}=\alpha\cdot\mu_{\Thu}(\{\lambda\in\CM\CL(\Sigma)\vert\iota(\lambda_2,\lambda)\le 1\})$$
The claim follows by taking the quotient of these two equations.
\end{proof}

As we mentioned in the introduction, combining Corollary \ref{kor-ratio} and the work of Mirzakhani \cite{Maryam-new} one gets immediately that the limit 
$$\lim_{L\to\infty}\frac{\vert\{\gamma\in\CS_{\gamma_0}\vert\iota(\lambda_0,\gamma)\le L\}\vert}{L^{6g-6+2r}}$$
exists for every multicurve $\gamma_0$ and every filling current $\lambda_0$. It is thus natural to wonder if more generally the limit
\begin{equation}\label{eq-1001}
\lim_{L\to\infty}\frac{\vert\{f(\alpha)\vert f\in\Map(\Sigma),\ \iota(\lambda_0,f(\alpha))\le L\}\vert}{L^{6g-6+2r}}
\end{equation}
also exists for currents $\alpha\in\CC(\Sigma)$ other than those arising from multicurves. This is indeed the case. To explain why, suppose for the sake of concreteness that $\Sigma$ is closed of genus $g$, that $\alpha\in\CC(\Sigma)$ has trivial stabilizer in the mapping class group, and consider for $L>0$ the measure
$$\nu_{\alpha}^L=\frac 1{L^{6g-6}}\sum_{f\in\Map(\Sigma)}\delta_{\frac 1Lf(\alpha)}$$
on the space of currents. Noting that there is some $K\ge 1$ with 
$$\frac 1K\cdot\iota(\lambda,\lambda_\Sigma)\le\iota(\lambda,\alpha)\le K\cdot\iota(\lambda,\lambda_\Sigma)$$
for all $\lambda\in\CC(\Sigma)$, we get that the $\nu_{\alpha}^L$-measure of the compact sets $\{\lambda\in\CC(\Sigma)\vert\iota(\lambda,\lambda_\Sigma)\le C\}$ is bounded from above for all $C$ by some number which does not depend on $L$. It follows that every sequence $(\nu_{\alpha}^{L_n})_n$ has a convergent subsequence. Moreover, as in the proof of Proposition \ref{prop-reduction} we have that if $\nu_{\alpha}^{L_n}$ converges to $\mu$ then 
$$\lim_{n\to\infty}\frac{\vert\{f\in\Map(\Sigma)\vert \iota(\lambda_0,f(\alpha))\le L_n\}\vert}{L_n^{6g-6}}
=\mu(\{\lambda\in\CC(\Sigma)\vert\iota(\lambda_0,\lambda)\le 1\})$$
In particular, to prove that the limit \eqref{eq-1001} exists it suffices to prove that the quantity 
\begin{equation}\label{eq-1002}
\mu(\{\lambda\in\CC(\Sigma)\vert\iota(\lambda_0,\lambda)\le 1\})
\end{equation}
does not depend on the particular sequence $L_n$. Well, we want to prove the independence of \eqref{eq-1002} for every every filling current $\lambda_0$, but since weighted currents corresponding to multicurves are dense in $\CC(\Sigma)$, it suffices to prove it for multicurves $\lambda_0$, which we moreover might assume to be filling and with trivial stabilizer in the mapping class group. In this case we have that
\begin{align*}
\mu(\{\lambda\in\CC(\Sigma)\vert\iota(\lambda_0,\lambda)\le 1\})
   &=\lim_{n\to\infty}\frac{\vert\{f\in\Map(\Sigma)\vert \iota(\lambda_0,f(\alpha))\le L_n\}\vert}{L_n^{6g-6}}\\
   &=\lim_{n\to\infty}\frac{\vert\{f\in\Map(\Sigma)\vert \iota(\alpha,f(\lambda_0))\le L_n\}\vert}{L_n^{6g-6}}\\
   &=\lim_{n\to\infty}\frac{\vert\{\gamma\in\CS_{\lambda_0}\vert \iota(\alpha,\gamma)\le L_n\}\vert}{L_n^{6g-6}}
\end{align*}
where the second equality holds because the intersection form is symmetric and invariant under the mapping class group -- the third limit is just a rewriting of the second one. However, from the work of Mirzakhani \cite{Maryam-new} and Corollary \ref{kor-ratio} we get that the third limit does not depend on the sequence $L_n$. It follows thus that the limit \eqref{eq-1001} exists, as claimed.

\subsection{Criteria for convergence}
We discuss now some conditions ensuring that the measures $\nu_{\gamma_0}^L$ converge when $L\to\infty$. Note that by Lemma \ref{alllimitsthesame} it suffices to give conditions ensuring the convergence of the measures $\mu_{\epsilon,\gamma_0}^L$ defined in \eqref{eq-measure-muL}. The first basic observation is the following:

\begin{lem}\label{lem-criterion-convergence-measures-sick-of-this}
Suppose that there is a non-empty open set $U\subset\CM\CL(\Sigma)$ with $\mu_{\Thu}(\D U)=0$ and for which the limit $\lim_{L\to\infty}\mu_{\epsilon,\gamma_0}^L(U)=\alpha_U$ exists and is finite. Then the limit $\lim_{L\to\infty}\nu_{\gamma_0}^L=\mu$ also exists and we have $\mu(U)=\alpha_U$.
\end{lem}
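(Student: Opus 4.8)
The plan is to use the hypothesis on $U$ to pin down the one multiple of the Thurston measure to which $\nu_{\gamma_0}^L$ could possibly converge, and then to promote subsequential convergence to genuine convergence via the standard subsequence argument. Since, by Lemma \ref{alllimitsthesame}, the asymptotics of $\nu_{\gamma_0}^L$ and of $\mu_{\epsilon,\gamma_0}^L$ are interchangeable, and since Proposition \ref{prop-sublimit} already provides the necessary compactness, the only work left is to identify the limiting constant.

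Concretely, I would fix an arbitrary sequence $(L_n)_n$ with $L_n\to\infty$ and let $(L_{n_k})_k$ be any subsequence. Applying Proposition \ref{prop-sublimit} to $(L_{n_k})_k$ yields a further subsequence $(L_{n_{k_j}})_j$ and some $\alpha>0$ with $\nu_{\gamma_0}^{L_{n_{k_j}}}\to\alpha\cdot\mu_{\Thu}$ in the weak-*-topology; by Lemma \ref{alllimitsthesame} the measures $\mu_{\epsilon,\gamma_0}^{L_{n_{k_j}}}$ converge to the same limit $\alpha\cdot\mu_{\Thu}$. Now the set $U$ enters: since $\mu_{\Thu}(\D U)=0$ and the common limit $\alpha\cdot\mu_{\Thu}$ does not charge $\D U$, a portmanteau-type argument (sandwiching $\mathbf 1_U$ between continuous compactly supported functions agreeing with it off an arbitrarily $\mu_{\Thu}$-small neighbourhood of $\D U$; one may assume $U$ relatively compact, as holds in all applications, or observe that finiteness of $\alpha_U$ forces $\mu_{\Thu}(U)<\infty$ and rules out escape of mass) gives $\mu_{\epsilon,\gamma_0}^{L_{n_{k_j}}}(U)\to\alpha\cdot\mu_{\Thu}(U)$. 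On the other hand, by hypothesis $\mu_{\epsilon,\gamma_0}^{L}(U)\to\alpha_U$, hence also along our subsequence, so $\alpha\cdot\mu_{\Thu}(U)=\alpha_U$. Since $U$ is non-empty open we have $\mu_{\Thu}(U)\in(0,\infty)$, so $\alpha=\alpha_U/\mu_{\Thu}(U)$; crucially this value is independent of $(L_n)$ and of the chosen subsequences.

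Setting $\mu=\frac{\alpha_U}{\mu_{\Thu}(U)}\cdot\mu_{\Thu}$, we have shown that every subsequence of $(\nu_{\gamma_0}^{L_n})_n$ has a further subsequence converging to $\mu$; by the standard subsequence criterion for convergence (valid in any topological space) this forces $\nu_{\gamma_0}^{L_n}\to\mu$, and since $(L_n)$ was arbitrary the limit $\lim_{L\to\infty}\nu_{\gamma_0}^L=\mu$ exists. Finally $\mu(U)=\frac{\alpha_U}{\mu_{\Thu}(U)}\mu_{\Thu}(U)=\alpha_U$, as claimed.

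The only genuinely non-formal step is the passage from the weak-* convergence $\mu_{\epsilon,\gamma_0}^{L_{n_{k_j}}}\to\alpha\cdot\mu_{\Thu}$ to the numerical convergence of the masses $\mu_{\epsilon,\gamma_0}^{L_{n_{k_j}}}(U)$; this is precisely what the hypotheses $\mu_{\Thu}(\D U)=0$ (controlling the boundary) and $\alpha_U<\infty$ (ruling out mass escaping to infinity) are there to handle. Everything else — relative compactness of the families of measures and the subsequence criterion — is soft and was already furnished by Proposition \ref{prop-sublimit} and Lemma \ref{alllimitsthesame}.
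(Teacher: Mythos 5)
Your argument is correct and is essentially identical to the paper's proof: both extract, from an arbitrary sequence, a subsequence along which $\nu_{\gamma_0}^{L}$ converges to $\alpha\cdot\mu_{\Thu}$ via Proposition \ref{prop-sublimit}, transfer this to $\mu_{\epsilon,\gamma_0}^{L}$ via Lemma \ref{alllimitsthesame}, and use the hypothesis on $U$ together with $\mu_{\Thu}(\D U)=0$ to pin down $\alpha=\alpha_U/\mu_{\Thu}(U)$ independently of the subsequence. Your write-up is in fact slightly more careful than the paper's, spelling out the portmanteau step and the finiteness of $\mu_{\Thu}(U)$, which the paper leaves implicit.
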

\begin{proof}
Recall that by Proposition \ref{prop-sublimit} every sequence $L_n\to\infty$ contains a subsequence $(L_{n_i})_i$ such that $(\nu_{\gamma_0}^{L_{n_i}})_i$ converges in the weak-*-topology to a multiple $\alpha\cdot\mu_{\Thu}$ of the Thurston measure. By Lemma \ref{alllimitsthesame} we get that 
$$\lim_{i\to\infty}\mu_{\epsilon,\gamma_0}^{L_{n_i}}=\alpha\cdot\mu_{\Thu}$$
as well. Therefore 
$$\alpha\cdot\mu_{\Thu}(U)=\lim_{i\to\infty}\mu_{\epsilon,\gamma_0}^{L_{n_i}}(U)=\alpha_U$$ 
meaning that the constant $\alpha$ does not depend on the particular convergent sequence $(\nu_{\gamma_0}^{L_{n_i}})_i$. The claim follows.
\end{proof}

Next we give a criterion ensuring the existence of a limit for the measures $\nu_{\gamma_0}^L$ in terms of the existence of densities of sets of simple multicurves invariant under a certain semi-group. More precisely, if $\tau$ is train-track let 
\begin{equation}\label{eq-semigroup-tt}
\Gamma_\tau=\{\phi\in\Map(\Sigma)\vert\phi(\tau)\prec\tau\}
\end{equation}
be the semi-group consisting of those mapping classes which map $\tau$ to a train-track carried by $\tau$. 

\begin{prop}\label{prop-semigroup-limit}
Let $\tau$ be a maximal recurrent train-track and $U\subset\{\lambda\in\CM\CL(\Sigma)\vert\lambda\prec\tau\}$ open with $\mu_{\Thu}(U)>0$ and $\mu_{\Thu}(\D U)=0$. Suppose also that the following holds:
\begin{quote}
(*) If $\CI\subset\{\gamma\in\CM\CL_\BZ(\Sigma)\vert\gamma\prec\tau\}$ is a non-empty $\Gamma_\tau$-invariant set of simple multicurves carried by $\tau$ then there is $\alpha>0$ with
$$\lim_{L\to\infty}\frac 1{L^{6g-6+2r}}\vert\CI\cap L\cdot U\vert=\alpha\cdot\mu_{\Thu}(U).$$
\end{quote}
Then the limit $\lim_{L\to\infty}\nu_{\gamma_0}^L$ exists.
\end{prop}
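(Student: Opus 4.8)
The plan is to use Lemma \ref{lem-criterion-convergence-measures-sick-of-this}: it suffices to produce one non-empty open set $U\subset\CM\CL(\Sigma)$ with $\mu_{\Thu}(\D U)=0$ for which $\lim_{L\to\infty}\mu_{\epsilon,\gamma_0}^L(U)$ exists and is finite. We take $U$ to be the set provided in the hypothesis of the proposition, i.e.\ an open subset of the maximal-dimensional simplex $\{\lambda\prec\tau\}$ with $\mu_{\Thu}(U)>0$ and $\mu_{\Thu}(\D U)=0$. Unwinding the definition \eqref{eq-measure-muL}, we have
$$
\mu_{\epsilon,\gamma_0}^L(U)=\frac 1{L^{6g-6+2r}}\sum_{\gamma\in\CM\CL_\BZ\cap L\cdot U}\vert\pi_{\epsilon,\gamma_0}^{-1}(\gamma)\vert
=\frac 1{L^{6g-6+2r}}\sum_{j\ge 1} j\cdot\bigl\vert\{\gamma\in\CM\CL_\BZ\cap L\cdot U : \vert\pi_{\epsilon,\gamma_0}^{-1}(\gamma)\vert=j\}\bigr\vert .
$$
By Proposition \ref{prop-key-map} the multiplicity function $\gamma\mapsto\vert\pi_{\epsilon,\gamma_0}^{-1}(\gamma)\vert$ is bounded by $\kappa$, so this is a finite sum $\sum_{j=0}^{\kappa}$, and it is enough to show that for each fixed $j$ the set
$$
\CI_j=\{\gamma\in\CM\CL_\BZ(\Sigma)\mid \gamma\prec\tau,\ \vert\pi_{\epsilon,\gamma_0}^{-1}(\gamma)\vert\ge j\}
$$
has a density in the sense of $(*)$. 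If we can do this, then writing $\vert\pi_{\epsilon,\gamma_0}^{-1}(\gamma)\vert=\sum_{j\ge 1}\mathbf 1_{\CI_j}(\gamma)$ and summing the densities $\alpha_j$ gives $\lim_L\mu_{\epsilon,\gamma_0}^L(U)=\bigl(\sum_{j}\alpha_j\bigr)\mu_{\Thu}(U)$, which is finite, and Lemma \ref{lem-criterion-convergence-measures-sick-of-this} finishes the proof. (One must be slightly careful: $\pi_{\epsilon,\gamma_0}$ is only defined on the generic subset $\CS_{\gamma_0}^\epsilon$, and its image lands among multicurves filling maximal $\epsilon$-geodesic train-tracks; for $\gamma$ not in the image we set the multiplicity to $0$, and this only affects a negligible contribution already controlled by Lemma \ref{alllimitsthesame}. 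Also we should restrict attention to $L$ along which $U\cap L^{-1}\CM\CL_\BZ$ actually meets the image; genericity via Proposition \ref{generic} handles the discarded part.)

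The crux is therefore to verify that each $\CI_j$ satisfies the hypothesis of $(*)$, i.e.\ that $\CI_j$ is a \emph{non-empty} $\Gamma_\tau$-invariant subset of $\{\gamma\in\CM\CL_\BZ(\Sigma)\mid\gamma\prec\tau\}$. The $\Gamma_\tau$-invariance is exactly the second assertion of Proposition \ref{prop-key-map}: if $\phi\in\Gamma_\tau$, i.e.\ $\phi(\tau)\prec\tau$, and $\gamma\fills\tau$, then $\vert\pi_{\epsilon,k}^{-1}(\phi(\gamma))\vert\ge\vert\pi_{\epsilon,k}^{-1}(\gamma)\vert$; hence $\gamma\in\CI_j\Rightarrow\phi(\gamma)\in\CI_j$. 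Here I need $\gamma$ to \emph{fill} $\tau$ for the inequality to apply, which is why $U$ should be chosen inside the open top-dimensional part of the simplex of laminations carried by $\tau$ (so that its integral points are filling); since $\mu_{\Thu}$ gives full measure to the filling locus, replacing $U$ by its intersection with that open set changes nothing. For non-emptiness: one just needs a single filling simple multicurve $\gamma\prec\tau$ with $\vert\pi_{\epsilon,\gamma_0}^{-1}(\gamma)\vert\ge j$ for each $j\le$ the maximal multiplicity; that maximal multiplicity is realized because some radalla $\hat\tau$ extending $\tau$ is filled by a curve of type $\gamma_0$ — this is where the definition of $\CS_{\gamma_0}^\epsilon$ and Lemma \ref{finite} enter — and for that $\hat\tau$ the count $s(\hat\tau,\omega,k)$ from Proposition \ref{prop-cars} is positive and, crucially, eventually independent of $\omega$ as the weights grow, so every large enough integral point of $U$ achieves the generic multiplicity. (If the maximal multiplicity is $m$, then $\CI_j=\emptyset$ for $j>m$, which is fine — empty sets trivially have density $0$ and we only invoke $(*)$ for the non-empty ones.)

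The main obstacle I anticipate is the bookkeeping around the fact that $\pi_{\epsilon,\gamma_0}$ is defined on $\CS_{\gamma_0}^\epsilon$ rather than on a $\Map(\Sigma)$-invariant set, and around the dependence of $\vert\pi_{\epsilon,\gamma_0}^{-1}(\gamma)\vert$ on which maximal $\epsilon$-geodesic train-track $\gamma$ fills (a simple multicurve can fill several). The clean way around this is to fix $\tau$ once and for all as in the statement, restrict everything to curves carried by $\tau$, and note that on the open simplex of laminations filling $\tau$ the multiplicity is intrinsic (Proposition \ref{prop-key-map} shows $\pi_{\epsilon,\gamma_0}$ does not depend on the radalla, only on its existence), so $\CI_j$ is well defined there; the contribution of $U$-points not carried by (our chosen) $\tau$ is zero because $U\subset\{\lambda\prec\tau\}$ by hypothesis. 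With that framing, each $\CI_j$ is a genuine $\Gamma_\tau$-invariant subset of $\{\gamma\in\CM\CL_\BZ\mid\gamma\prec\tau\}$, hypothesis $(*)$ applies to each, and summing gives the convergence of $\mu_{\epsilon,\gamma_0}^L(U)$ needed to invoke Lemma \ref{lem-criterion-convergence-measures-sick-of-this}.
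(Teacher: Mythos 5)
Your proposal is correct and follows essentially the same route as the paper's proof: decompose the multiplicity function into the level sets $\CI_s=\{\gamma\fills\tau : \vert\pi_{\epsilon,\gamma_0}^{-1}(\gamma)\vert\ge s\}$, use Proposition \ref{prop-key-map} for their $\Gamma_\tau$-invariance and uniform boundedness, apply hypothesis (*) to each, and conclude via Lemma \ref{lem-criterion-convergence-measures-sick-of-this}, with the non-filling curves discarded as negligible. Your extra care about non-emptiness and about restricting to the filling locus is sound but does not change the argument.
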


Before proving Proposition \ref{prop-semigroup-limit} let us comment on two of its features. On the one hand, Proposition \ref{prop-semigroup-limit} has the virtue that it reduces the problem of showing that the measures $\nu_{\gamma_0}^L$ converge to a problem about distribution of simple multicurves. On the other hand, working with semigroups is harder than working with groups. Also, even if we were to replace the semigroup by the whole mapping class group, the statement would still not be at all obvious -- in fact, it would be the main result of \cite{Maryam}. 

\begin{proof}
Note first that, up to choosing a different hyperbolic metric on the surface $\Sigma$, we can assume $\tau$ is $\epsilon$-geodesic. In particular, we get from Proposition \ref{prop-key-map} that if $\phi\in\Gamma_\tau$ then
$$\vert\pi_{\epsilon,\gamma_0}^{-1}(\gamma)\vert\le\vert\pi_{\epsilon,\gamma_0}^{-1}(\phi(\gamma))\vert$$
for every simple multicurve $\gamma\fills\tau$. It follows that the set 
$$\CI_s=\{\gamma\in\CM\CL_\BZ(\Sigma),\ \gamma\fills\tau,\ \vert\pi_{\epsilon,\gamma_0}^{-1}(\gamma)\vert\ge s\}$$
is $\Gamma_\tau$-invariant for all $s$. Note also that Proposition \ref{prop-key-map} asserts that there is $\kappa$ with $\CI_s=\emptyset$ for all $s>\kappa$. Now, by assumption, for all $s$ there is some number $\alpha_s$ with 
$$\lim_{L\to\infty}\frac 1{L^{6g-6+2r}}\vert\CI_s\cap L\cdot U\vert=\alpha_s\cdot\mu_{\Thu}(U).$$
Also, we can compute $\mu_{\epsilon,\gamma_0}^L(U)$ as follows:
\begin{align*}
\mu_{\epsilon,\gamma_0}^L(U)
   &=\frac 1{L^{6g-6+2r}}\sum_{\gamma\in\CM\CL_\BZ(\Sigma)\cap L\cdot U}\vert\pi_{\epsilon,\gamma_0}^{-1}(\gamma)\vert\\
   &\sim\frac 1{L^{6g-6+2r}}\sum_{\gamma\in\CM\CL_\BZ(\Sigma)\cap L\cdot U,\ \gamma\fills\tau}\vert\pi_{\epsilon,\gamma_0}^{-1}(\gamma)\vert\\
   &=\frac 1{L^{6g-6+2r}}\sum_{s=1}^k\vert\CI_s\cap L\cdot U\vert.
\end{align*}
Here $\sim$ means that the difference between the quantities tends to $0$ when $L$ grows -- this is so because the set of multicurves carried by $\tau$ but which do not fill $\tau$ is negligible. The final equality holds because $\vert\pi_{\epsilon,\gamma_0}^{-1}(\gamma)\vert$ can only take the values $0,1,\dots,\kappa$.

Combining the previous equations we get that $\mu_{\epsilon,\gamma_0}^L(U)$ converges when $L\to\infty$ to the number 
$$\lim_{L\to\infty}\mu_{\epsilon,\gamma_0}^L(U)=(\alpha_1+\alpha_2+\dots+\alpha_\kappa)\cdot\mu_{\Thu}(U).$$
The claim now follows from Lemma \ref{lem-criterion-convergence-measures-sick-of-this}.
\end{proof}

At this point we can prove:

\begin{kor}\label{kor-small-k}
Let $\Sigma$ be a hyperbolic surface of genus $g$ with $r$ cusps and suppose that $2g+r>3$. Then we have:
\begin{align*}
\lim_{L\to\infty}\frac{\vert\{\gamma\in\CS\vert\ell_\Sigma(\gamma)\le L,\ \iota(\gamma,\gamma)=1\}\vert}{L^{6g-6+2r}}
&=3(2g-2+r)\cdot c_\Sigma\\
\lim_{L\to\infty}\frac{\vert\{\gamma\in\CS\vert\ell_\Sigma(\gamma)\le L,\ \iota(\gamma,\gamma)=2\}\vert}{L^{6g-6+2r}}
&=\frac 92\big((2g+r)(2g+r-3)+2\big)\cdot c_\Sigma.
\end{align*}
where $c_\Sigma=\mu_{\Thu}(\{\lambda\in\CM\CL(\Sigma)\vert\ell_\Sigma(\lambda)\le 1\})$.
\end{kor}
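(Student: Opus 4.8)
The strategy is to push the length‑counting for all of $\CS_k$ through the map $\pi_{\epsilon,k}$ and then to feed in the explicit preimage counts of Lemma~\ref{countk12}: for $k=1,2$ this map is, generically, a constant‑to‑one covering of the set of simple multicurves. Fix $\epsilon$ as at the start of section~\ref{sec-themap}, let $K\subset PT\Sigma$ be the compact set of Lemma~\ref{compact}, and introduce the locally finite measures
$$\nu_k^L=\frac 1{L^{6g-6+2r}}\sum_{\gamma\in\CS_k}\delta_{\frac 1L\gamma},\qquad \mu_{\epsilon,k}^L=\frac 1{L^{6g-6+2r}}\sum_{\gamma\in\CM\CL_\BZ}\vert\pi_{\epsilon,k}^{-1}(\gamma)\vert\,\delta_{\frac 1L\gamma}$$
on $\CC_K(\Sigma)$ (local finiteness holds by Corollary~\ref{poly-growth}, and for the second since $\vert\pi_{\epsilon,k}^{-1}(\gamma)\vert$ is bounded by Proposition~\ref{prop-key-map}). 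Since $\nu_k^L(\{\lambda:\ell_\Sigma(\lambda)\le 1\})=\frac 1{L^{6g-6+2r}}\vert\{\gamma\in\CS_k:\ell_\Sigma(\gamma)\le L\}\vert$ and $\mu_{\Thu}(\{\lambda:\ell_\Sigma(\lambda)=1\})=0$, it will suffice, exactly as in the proof of Proposition~\ref{prop-reduction}, to compute $\lim_{L\to\infty}\nu_k^L$ and evaluate it at $\{\ell_\Sigma\le 1\}$.

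First I would note that the proofs of Fact~\ref{doesnotmove} and Lemma~\ref{alllimitsthesame} use nothing about the single orbit $\CS_{\gamma_0}$ beyond four facts that hold verbatim for $\CS_k$: all its elements have self‑intersection number exactly $k$ (so $\iota(\gamma_n,\gamma_n)/L_n^2\to 0$), $\CS_k^\epsilon$ is generic in $\CS_k$ (Proposition~\ref{generic}), finitely many $\epsilon$‑geodesic radallas carry $\CS_k^\epsilon$ (Lemma~\ref{finite}), and $\iota(\eta,\pi_{\epsilon,k}(\gamma))=\iota(\eta,\gamma)$ for transversal $\eta$ (Lemma~\ref{transverse-inter}); note also that $\mu_{\epsilon,k}^L$ is the push‑forward of the restriction of $\nu_k^L$ to $\frac 1L\CS_k^\epsilon$ under $\frac 1L\gamma\mapsto\frac 1L\pi_{\epsilon,k}(\gamma)$. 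Copying those arguments therefore gives: if $\lim_{L\to\infty}\mu_{\epsilon,k}^L=\mu$ then $\lim_{L\to\infty}\nu_k^L=\mu$. In particular we do not need Proposition~\ref{prop-sublimit} here --- it is enough to produce the limit of $\mu_{\epsilon,k}^L$ directly.

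To do that, apply Lemma~\ref{finite} with $k=0$ and fix a finite list $\tau_1,\dots,\tau_m$ of maximal $\epsilon$‑geodesic train‑tracks carrying all but finitely many simple multicurves, and let $\CG\subset\CM\CL_\BZ$ be the set of simple multicurves that fill some $\tau_j$ with $\omega_\gamma(e)\ge 10$ for every edge $e$ of that $\tau_j$. The complement $\CM\CL_\BZ\setminus\CG$ is negligible: simple multicurves that fill no maximal recurrent train‑track grow with degree strictly below $6g-6+2r$ by the dimension count in the proof of Lemma~\ref{poly-bound} (applied with $k=0$ to the non‑maximal $\tau_j$), and for each fixed $j$ the set $\{\gamma\prec\tau_j:\omega_\gamma(e)<10\ \text{for some }e\in E(\tau_j)\}$ lies in finitely many affine‑hyperplane sections of the lattice of integral solutions of the switch equations of $\tau_j$, hence also grows with degree below $6g-6+2r$. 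On $\CG$, Lemma~\ref{countk12} gives $\vert\pi_{\epsilon,k}^{-1}(\gamma)\vert=a_k$ with $a_1=6g-6+3r$ and $a_2=\frac 92\bigl((2g+r)(2g+r-3)+2\bigr)$. Since $\vert\pi_{\epsilon,k}^{-1}(\gamma)\vert\le\kappa$ is bounded (Proposition~\ref{prop-key-map}) and $\CM\CL_\BZ\setminus\CG$ is negligible, for every continuous compactly supported $f$ on $\CC_K(\Sigma)$ the difference $\int f\,d\mu_{\epsilon,k}^L-a_k\int f\,d\bigl(\frac 1{L^{6g-6+2r}}\sum_{\gamma\in\CM\CL_\BZ}\delta_{\frac 1L\gamma}\bigr)$ tends to $0$; combining this with the identity $\frac 1{L^{6g-6+2r}}\sum_{\gamma\in\CM\CL_\BZ}\delta_{\frac 1L\gamma}\to\mu_{\Thu}$ recorded in this section gives $\lim_{L\to\infty}\mu_{\epsilon,k}^L=a_k\,\mu_{\Thu}$ for $k=1,2$. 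Hence $\lim_{L\to\infty}\nu_k^L=a_k\,\mu_{\Thu}$, and evaluating at $\{\ell_\Sigma\le 1\}$ yields the values $a_k\,c_\Sigma$, which are $3(2g-2+r)\,c_\Sigma$ for $k=1$ (since $6g-6+3r=3(2g-2+r)$) and $\frac 92\bigl((2g+r)(2g+r-3)+2\bigr)c_\Sigma$ for $k=2$.

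The only genuinely non‑formal ingredient is Lemma~\ref{countk12}, and in particular its $k=2$ count: this rests on the ``railroad car'' analysis sketched after Proposition~\ref{prop-cars} together with keeping track of the leaves inside the $r$ once‑punctured monogons, and carrying it out carefully --- verifying that the $8$ label distributions and the $V(V+6)$ labelled normal forms are counted without slippage, and that the punctured‑monogon contributions interact correctly with the bulk --- is where the real work lies. The only mild subtlety in the packaging itself is the negligibility of $\CM\CL_\BZ\setminus\CG$, which has to be stated uniformly over the finite list $\tau_1,\dots,\tau_m$ and against the correct integral lattice; given Lemma~\ref{poly-bound} this is routine.
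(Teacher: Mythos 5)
Your proposal is correct and follows the same backbone as the paper's proof: push the count through $\pi_{\epsilon,k}$, observe that Lemma \ref{alllimitsthesame} and Fact \ref{doesnotmove} carry over verbatim from $\CS_{\gamma_0}$ to $\CS_k$, feed in the explicit fiber counts of Lemma \ref{countk12}, and finish by evaluating the limit measure on $\{\ell_\Sigma\le 1\}$ as in Proposition \ref{prop-reduction}. The one genuine difference is how you establish convergence of $\mu_{\epsilon,k}^L$. The paper computes $\lim_L\mu_{\epsilon,k}^L(U)$ only for the unit-length ball $U$ inside the chart of a \emph{single} maximal recurrent train-track, and then upgrades this to convergence of the full measures via Lemma \ref{lem-criterion-convergence-measures-sick-of-this}, which rests on Proposition \ref{prop-sublimit} and hence on Masur's ergodicity theorem. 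You instead show that the fiber count equals the constant $a_k$ on a generic subset $\CG$ of \emph{all} of $\CM\CL_\BZ$ (using the finite list of train-tracks from Lemma \ref{finite} with $k=0$, Lemma \ref{poly-bound} for the non-maximal ones, and hyperplane sections of the integral solution lattice for the weight-$\ge 10$ condition), so that $\mu_{\epsilon,k}^L\to a_k\,\mu_{\Thu}$ follows directly from the standard convergence $\frac 1{L^{6g-6+2r}}\sum_{\gamma\in\CM\CL_\BZ}\delta_{\frac 1L\gamma}\to\mu_{\Thu}$, with no appeal to ergodicity. This buys a more self-contained and slightly more elementary argument for these two special cases, at the price of having to verify the genericity of $\CG$ uniformly over the finite list of train-tracks — which you do correctly. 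Both routes ultimately rest on the same non-formal input, Lemma \ref{countk12}, whose $k=2$ case the paper itself leaves to the reader; you are right to flag that as where the real work sits.
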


Before launching the proof of Corollary \ref{kor-small-k} note that all the results proved in this section for the measures $\nu_{\gamma_0}^L$ and $\mu_{\epsilon,\gamma_0}^L$ also apply to the measures
$$\nu_k^L=\frac 1{L^{6g-6+2r}}\sum_{\gamma\in\CS_k}\delta_{\frac 1L\gamma}\text{ and } \mu_{\epsilon,k}^L=\frac 1{L^{6g-6+2r}}\sum_{\gamma\in\CM\CL_\BZ}\vert\pi_{\epsilon,k}^{-1}(\gamma)\vert\delta_{\frac 1L\gamma}.$$
The proofs are identical.

\begin{proof}
Let $\tau\subset\Sigma$ be a maximal recurrent train-track and $O\subset\CM\CL(\Sigma)$ the set of measured laminations carried by $\tau$ and let $U=\{\lambda\in O\vert\ell_\Sigma(\lambda)\le 1\}$. Finally, let $\CV\subset\CM\CL_\BZ(\Sigma)\cap O$ be the set of all simple multicurves $\gamma\prec\tau$ carried by $\tau$ and traversing each edge of $\tau$ at least 10 times, that is $\omega_\gamma(e)\ge 10$ for all $e\in E(\tau)$. Note that $\CV$ is generic in the set $\CM\CL_\BZ(\Sigma)\cap O$ of simple multicurves carried by $\tau$. Genericity of $\CV$ in $\CM\CL_\BZ(\Sigma)\cap O$, together with the universal bound given by Proposition \ref{prop-key-map} on the cardinality of the fibers of $\pi_{\epsilon,k}$, implies that 
$$\lim_{L\to\infty}\frac 1{L^{6g-6+2r}}\sum_{\gamma\in(\CM\CL_\BZ(\Sigma)\cap L\cdot U)\setminus\CV}\vert\pi_{\epsilon,k}^{-1}(\gamma)\vert=0$$
This means that if either one of the limits
$$\lim_{L\to\infty}\mu_{\epsilon,k}^L(U)\text{ and }\lim_{L\to\infty}\frac 1{L^{6g-6+2r}}\sum_{\gamma\in\CV\cap L\cdot U}\vert\pi_{\epsilon,k}^{-1}(\gamma)\vert$$ 
exists then the other also exists, and when the limits exist, they agree. 

Consider the cases $k=1$ and recall that by Lemma \ref{countk12} we have 
$$(\pi^1_\epsilon)^{-1}(\gamma)=3(2g-2+r)$$
for all $\gamma\in\CV$. We thus have
$$\lim_{L\to\infty}\frac 1{L^{6g-6+2r}}\sum_{\gamma\in\CV\cap L\cdot U}\vert\pi_{\epsilon,1}^{-1}(\gamma)\vert=3(2g-2+r)
\lim_{L\to\infty}\frac{\vert \CV\cap L\cdot U\vert}{L^{6g-6+2r}}$$
Using again that $\CV$ is generic in $U\cap\CM\CL_\BZ(\Sigma)$ we get that the latter limit converges to the Thurston measure of $U$:
$$\mu_{\Thu}(U)=\lim_{L\to\infty}\frac{\vert \CV\cap L\cdot U\vert}{L^{6g-6+2r}}$$
Altogether we have proved that 
$$\lim_{L\to\infty}\mu_{\epsilon,1}^L(U)=3(2g-2+r)\cdot \mu_{\Thu}(U)$$
Lemma \ref{lem-criterion-convergence-measures-sick-of-this} implies that that the measures $\mu_{\epsilon,1}^L$ converge to a measure $\mu$ with $\mu(U)=3(2g-2+r)\cdot \mu_{\Thu}(U)$. Lemma \ref{alllimitsthesame} implies that $\mu=\lim_{L\to\infty}\nu_1^L$ and we get from Proposition \ref{prop-sublimit} that $\mu$ is a multiple of the Thurston measure. Since we know the measure of $U$ we thus get that
$$\mu=3(2g-2+r)\cdot\mu_{\Thu}$$
The claim for $k=1$ follows now from Proposition \ref{prop-reduction}. Exactly the same argument applies for $k=2$.
\end{proof}


\section{}\label{sec-torus}

In this section we prove Theorem \ref{sat2}. We start by recalling a few facts about curves and train-tracks in the once punctured torus.

\subsection{Train-tracks and simple curves in the punctured torus}
Let $T=T_{1,1}$ be the once punctured torus and recall the following well-known fact:

\begin{fact}
The inclusion map $T_{1,1}\to T_{1,0}$ from the once punctured torus $T_{1,1}$ to the closed torus $T_{1,0}$ induces a bijection between their respective sets of oriented simple non-peripheral curves. Moreover, this bijection preserves both the geometric and algebraic intersection number.
\end{fact}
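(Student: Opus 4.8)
The plan is to set up the bijection through first homology and then check that the two intersection numbers match, the algebraic one being essentially formal and the geometric one the only real point. Write $j\colon T_{1,1}\hookrightarrow T_{1,0}$ for the inclusion. On fundamental groups $j$ is the abelianisation $F_2\to\BZ^2$ (its kernel is normally generated by the peripheral commutator), so $j_*\colon H_1(T_{1,1};\BZ)\to H_1(T_{1,0};\BZ)$ is an isomorphism. A non-peripheral essential simple closed curve $\gamma\subset T_{1,1}$ is non-separating: a separating one would cut off, on the side disjoint from the puncture, a compact subsurface with a single boundary circle and $\chi\le-1$ (a single boundary circle rules out an annulus, and essentiality rules out a disc), forcing the puncture side to be a once-punctured disc and hence $\gamma$ to be peripheral. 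A non-separating curve has $\iota(\gamma,\delta)=1$ for some curve $\delta$, so $[\gamma]$ is primitive in $H_1(T_{1,1};\BZ)$ and $j_*[\gamma]$ is primitive in $\BZ^2$; since on the closed torus a loop of primitive class is freely homotopic to a unique essential simple closed curve, $\gamma\mapsto j\circ\gamma$ is a well-defined map from oriented non-peripheral simple closed curves on $T_{1,1}$ to oriented essential simple closed curves on $T_{1,0}$, and under the standard parametrisation of the latter by primitive vectors it is just $\gamma\mapsto j_*[\gamma]$.

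This map is onto, since a straight simple closed geodesic of a given primitive slope for a Euclidean structure on $T_{1,0}$, translated off the puncture, is a non-peripheral simple closed curve on $T_{1,1}$ of that class. It is injective by the classical fact that a non-peripheral simple closed curve on $T_{1,1}$ is determined up to isotopy by its homology class --- equivalently, $\Map(T_{1,1})\cong\SL_2\BZ$ acts on these curves through its transitive action on primitive vectors, the Dehn twist about $\gamma$ generating $\Stab(\gamma)$; see \cite{Casson-Bleiler,FLP}. Hence $j$ induces a bijection on oriented simple non-peripheral curves, as claimed.

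For the algebraic intersection number: it depends only on homology classes and is computed as the signed count of intersection points of any transverse representatives. Taking transverse geodesic representatives of $\gamma$ and $\gamma'$ inside $T_{1,1}$ and viewing them inside $T_{1,0}$ leaves the intersection points and signs unchanged, so the algebraic intersection numbers of $\gamma,\gamma'$ and of $j\gamma,j\gamma'$ agree. For the geometric intersection number one inequality is immediate: representatives in minimal position on $T_{1,1}$ are representatives of $j\gamma,j\gamma'$ on $T_{1,0}$, so $\iota_{T_{1,0}}(j\gamma,j\gamma')\le\iota_{T_{1,1}}(\gamma,\gamma')$. For the reverse, fix a Euclidean structure on $T_{1,0}$ and put the puncture at a point $P_0$. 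If $\gamma=\gamma'$ there is nothing to prove; otherwise represent $j\gamma,j\gamma'$ by straight simple closed geodesics $g,g'$, chosen among the translates of the two line families so that $P_0\notin g\cup g'$ and $g$ meets $g'$ transversely in exactly $\iota_{T_{1,0}}(j\gamma,j\gamma')$ points --- possible for generic translates, because two straight families of distinct slopes on $T_{1,0}$ bound no bigon (lift to $\BR^2$). Now $g,g'\subset T_{1,1}$ are non-peripheral simple closed curves with $j_*[g]=j_*[\gamma]$ and $j_*[g']=j_*[\gamma']$, so by the injectivity above $g$ is isotopic to $\gamma$ and $g'$ to $\gamma'$ in $T_{1,1}$; and $g,g'$ bound no bigon in $T_{1,1}$, since an innermost such bigon would be an embedded disc in $T_{1,1}\subset T_{1,0}$ and thus a bigon of $g,g'$ in $T_{1,0}$. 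By the bigon criterion $g,g'$ are in minimal position in $T_{1,1}$, so $\iota_{T_{1,1}}(\gamma,\gamma')=\#(g\cap g')=\iota_{T_{1,0}}(j\gamma,j\gamma')$.

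The only non-formal ingredient is the injectivity of the curve map on $T_{1,1}$ (that homology class determines a non-peripheral simple closed curve up to isotopy), which we would quote rather than reprove. Granting it, the one point that might look dangerous --- that the puncture could create room to remove crossings and make $\iota_{T_{1,1}}$ strictly smaller than $\iota_{T_{1,0}}$ --- is dispatched simply by exhibiting straight Euclidean representatives avoiding the puncture together with the bigon criterion.
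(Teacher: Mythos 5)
Your argument is correct. The paper states this as a well-known Fact and offers no proof, so there is nothing to compare against; your write-up supplies the standard argument one would give. The two genuinely non-formal points are exactly the ones you isolate: that homology class determines a non-peripheral simple closed curve on $T_{1,1}$ up to isotopy (which you rightly quote), and the inequality $\iota_{T_{1,1}}(\gamma,\gamma')\le\iota_{T_{1,0}}(j\gamma,j\gamma')$, which you handle correctly by taking straight Euclidean representatives missing the puncture and observing that a bigon in $T_{1,1}$ would persist as a bigon in $T_{1,0}$. The remaining steps (non-peripheral implies non-separating implies primitive class, surjectivity via straight geodesics, invariance of the algebraic count) are all sound as written.
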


It follows that choosing a basis for the homology $H_1(T_{1,1},\BZ)$ of the torus, we can identify the set of simple multicurves in the once punctured torus with the integral homology classes in the torus up to sign:
\begin{equation}\label{eq-adeleiscated}
\CM\CL_\BZ(T_{1,1})=\BZ^2/\pm 1
\end{equation}
In fact, this identification extends to an identification between $\BR^2/\pm 1$ and the space of measured laminations on $T_{1,1}$:
\begin{equation}\label{eq-adeleiscated3}
\CM\CL(T_{1,1})=\BR^2/\pm 1
\end{equation}
This last identification is compatible with the scaling by positive scalars in $\BR_+$ on the left and the right. Moreover, \eqref{eq-adeleiscated} and \eqref{eq-adeleiscated3} are also compatible under the identification
\begin{equation}\label{eq-adeleiscated2}
\Map(T_{1,1})=\SL_2\BZ
\end{equation}
between the mapping class group and $\SL_2\BZ$, where the actions are on the left by mapping classes and on the right by matrix multiplication. 
\medskip

We describe next the maximal recurrent train-tracks in $T$. For every pair of oriented simple essential curves $\alpha,\beta\subset T$ which intersect once, there is a train-track $\tau_{\alpha,\beta}$ which carries the curves $\alpha,\beta$ as well as the simple curve $\alpha\beta$ whose homology class is the sum of the classes of $\alpha$ and $\beta$ (compare with figure \ref{fig1}). Observe that 
$$\tau_{\alpha,\beta}=\tau_{-\alpha,-\beta}=\tau_{\beta,\alpha}=\tau_{-\beta,-\alpha}\text{ but }\tau_{\alpha,\beta}\neq\tau_{\alpha,-\beta}.$$
Moreover, with notation as in \eqref{eq-adeleiscated3}, we have 
$$\{\lambda\in\CM\CL(T_{1,1})\vert\lambda\prec\tau_{\alpha,\beta}\}=\{x\alpha+y\beta\vert x,y\ge0\}/\pm 1$$

\begin{figure}[h]
\includegraphics[width=10cm, height=3cm]{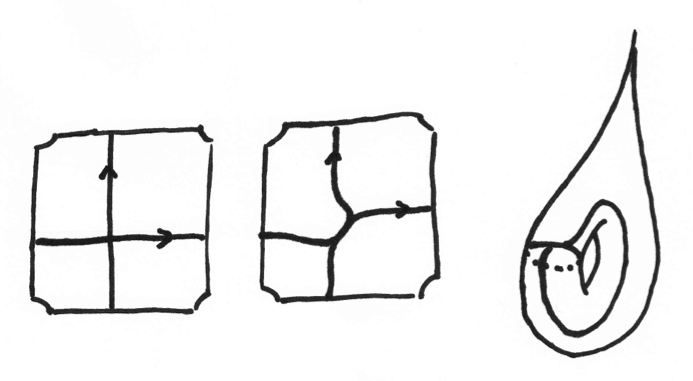}
\caption{The train-track associated to two essential simple curves which intersect once.}
\label{fig1}
\end{figure}

Note that $\tau_{\alpha,\beta}$ is a maximal recurrent train-track. Conversely, it is well-known that every maximal recurrent train-track $\tau$ on the torus $T_{1,1}$ differs from $\tau_{\alpha,\beta}$ by a diffeomorphism. Moreover, since every orientation preserving diffeomorphism $\phi:T_{1,1}\to T_{1,1}$ with $\phi(\tau_{\alpha,\beta})=\tau_{\alpha,\beta}$ is either isotopic to $\Id$ or to $-\Id$ we obtain that the mapping class of the homeomorphism mapping  $\tau$ to $\tau_{\alpha,\beta}$ is unique up to composition by $-\Id$. In other words we have:

\begin{fact}
Let $T$ be a once punctured torus and let $\alpha$ and $\beta$ be oriented simple curves intersecting once. If $\tau$ is a maximal recurrent train-track in $T$ then there is an orientation preserving homeomorphism $\phi:T\to T$ with $\phi(\tau_{\alpha,\beta})=\tau$. Moreover, the mapping class $[\phi]\in\Map(T)$ of $f$ is uniquely determined up to composition by $-\Id$.
\end{fact}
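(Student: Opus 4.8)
The plan is to first pin down the combinatorial type of a maximal recurrent train-track in $T$, use it to produce $\phi$, and then reduce the uniqueness assertion to a short computation in $\SL_2\BZ$. The existence of \emph{some} homeomorphism $\phi\colon T\to T$ with $\phi(\tau_{\alpha,\beta})=\tau$ is the well-known fact recalled just above the statement; for a self-contained argument one observes that a trivalent maximal recurrent train-track in $T$ necessarily has two switches, three branches and a single complementary region, a once-punctured bigon -- this follows from trivalence, an Euler characteristic count, and the two-dimensionality of the cone of carried measured laminations -- and then runs through the finitely many trivalent 1-complexes of this type. The dumbbell is excluded because the loop at a switch produces a complementary monogon (possibly punctured), and of the two smoothings of the theta graph only the one underlying $\tau_{\alpha,\beta}$ admits a strictly positive solution of the switch equations, the other forcing some branch to carry weight zero. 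Once the combinatorial type of $\tau$ is matched to that of $\tau_{\alpha,\beta}$, a change-of-coordinates argument yields $\phi$. To arrange that $\phi$ is \emph{orientation-preserving}: if the homeomorphism produced this way reverses orientation, post-compose it with the homeomorphism of $T$ realizing $\alpha\mapsto\beta$, $\beta\mapsto\alpha$; this map acts on $H_1(T;\BZ)$ by a matrix of determinant $-1$, so it reverses orientation, and it fixes $\tau_{\alpha,\beta}$ because $\tau_{\alpha,\beta}=\tau_{\beta,\alpha}$.

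For uniqueness, given orientation-preserving $\phi,\phi'$ with $\phi(\tau_{\alpha,\beta})=\phi'(\tau_{\alpha,\beta})=\tau$, set $\psi=\phi^{-1}\circ\phi'$; then $\psi$ is orientation-preserving and $\psi(\tau_{\alpha,\beta})=\tau_{\alpha,\beta}$, so it suffices to show that an orientation-preserving mapping class fixing $\tau_{\alpha,\beta}$ is $[\Id]$ or $[-\Id]$. Such a $\psi$ preserves the set $\{\lambda\in\CM\CL(T)\mid\lambda\prec\tau_{\alpha,\beta}\}$, which under the identifications $\CM\CL(T)=\BR^2/\pm1$ and $\Map(T)=\SL_2\BZ$ -- with $[\alpha],[\beta]$ taken as the basis of $H_1(T;\BZ)$ -- is the image in $\BR^2/\pm1$ of the closed first quadrant. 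Hence the matrix $A\in\SL_2\BZ$ representing $[\psi]$ has the property that $A$ or $-A$ maps the closed first quadrant onto itself; a real matrix with that property permutes the extremal rays $\BR_{\ge0}e_1$ and $\BR_{\ge0}e_2$ and is therefore a nonnegative diagonal or nonnegative anti-diagonal matrix, and among these only $\pm\Id$ lie in $\SL_2\BZ$ (the anti-diagonal ones have determinant $-1$). Thus $[\psi]\in\{[\Id],[-\Id]\}$, which is exactly the assertion that $[\phi]$ is determined by $\tau$ up to composition with $-\Id$.

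The step I expect to be the main obstacle is the combinatorial census behind the first claim: one has to keep track of the smoothing data at each switch, not just the underlying graph, and it is recurrence -- rather than maximality alone -- that eliminates the degenerate configurations. Once $\tau$ is known to have the combinatorial type of $\tau_{\alpha,\beta}$, the change-of-coordinates construction of $\phi$ and the $\SL_2\BZ$ bookkeeping behind uniqueness are both routine.
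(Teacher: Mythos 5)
Your proposal is correct and follows essentially the same route as the paper, which simply quotes the classification of maximal recurrent train-tracks on the punctured torus as well known and derives uniqueness from the assertion that an orientation-preserving mapping class fixing $\tau_{\alpha,\beta}$ is isotopic to $\pm\Id$ --- exactly what your quadrant-preservation computation in $\SL_2\BZ$ establishes. The one slip is in your exclusion of the dumbbell: if the two germs of a loop lie on opposite sides of its switch there is no complementary monogon, but that configuration is eliminated by the same weight-zero argument you apply to the bad smoothing of the theta graph, so nothing essential is missing.
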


Observe that the train-tracks $\tau_{\alpha\beta,\beta}$ and $\tau_{\alpha,\alpha\beta}$ are carried by $\tau_{\alpha,\beta}$:
$$\tau_{\alpha\beta,\beta},\tau_{\alpha,\alpha\beta}\prec\tau_{\alpha,\beta}.$$
Also, note that if $(\alpha,\beta)$ corresponds to a positively oriented basis of $\BZ^2$ under \eqref{eq-adeleiscated}, we have
$$\delta_{\beta}^{-1}(\tau_{\alpha,\beta})=\tau_{\alpha\beta,\beta},\ \ \delta_{\alpha}(\tau_{\alpha,\beta})=\tau_{\alpha,\alpha\beta}$$
where $\delta_{\beta}$ and $\delta_{\alpha}$ are the right Dehn-twist along $\beta$ and $\alpha$, respectively. We obtain:

\begin{fact}
Let $T$ be a once punctured torus and let $\alpha$ and $\beta$ be oriented simple curves with algebraic intersection number $\langle\alpha,\beta\rangle=1$ and $\phi\in\Map(T)$. Then we have 
$$\Gamma_{\tau_{\alpha,\beta}}=\{\phi\in\Map(T_{1,1})\vert\phi(\tau_{\alpha,\beta})\prec\tau_{\alpha,\beta}\}=\langle\delta_\alpha,\delta_\beta^{-1}\rangle_+$$
where $\langle\delta_\alpha,\delta_\beta^{-1}\rangle_+$ is the semigroup generated by $\delta_\alpha$ and $\delta_\beta^{-1}$.
\end{fact}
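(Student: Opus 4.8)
The plan is to prove the two inclusions separately, using the identifications $\Map(T_{1,1})=\SL_2\BZ$ and $\CM\CL(T_{1,1})=\BR^2/\pm 1$ from \eqref{eq-adeleiscated2} and \eqref{eq-adeleiscated3}, chosen so that $(\alpha,\beta)$ is the standard basis of $\BZ^2$; then the cone $\{\lambda\in\CM\CL(T)\vert\lambda\prec\tau_{\alpha,\beta}\}$ is the image of the closed first quadrant $\BR_{\ge0}^2$.

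For the inclusion $\langle\delta_\alpha,\delta_\beta^{-1}\rangle_+\subseteq\Gamma_{\tau_{\alpha,\beta}}$, I would first observe that $\Gamma_{\tau_{\alpha,\beta}}$ is a semigroup: if $\phi(\tau_{\alpha,\beta})\prec\tau_{\alpha,\beta}$ and $\psi(\tau_{\alpha,\beta})\prec\tau_{\alpha,\beta}$, then applying the homeomorphism $\psi$ to the first relation gives $\psi\phi(\tau_{\alpha,\beta})\prec\psi(\tau_{\alpha,\beta})\prec\tau_{\alpha,\beta}$, and carrying is transitive. Hence it is enough to check that the two generators lie in $\Gamma_{\tau_{\alpha,\beta}}$, and this is precisely the content of the relations $\delta_\beta^{-1}(\tau_{\alpha,\beta})=\tau_{\alpha\beta,\beta}$ and $\delta_\alpha(\tau_{\alpha,\beta})=\tau_{\alpha,\alpha\beta}$ displayed just above, together with the observation $\tau_{\alpha\beta,\beta},\tau_{\alpha,\alpha\beta}\prec\tau_{\alpha,\beta}$ recorded there.

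For the reverse inclusion, let $\phi\in\Gamma_{\tau_{\alpha,\beta}}$. Since $\alpha,\beta\prec\tau_{\alpha,\beta}$, and since carrying is preserved by homeomorphisms and is transitive, we get $\phi(\alpha),\phi(\beta)\prec\phi(\tau_{\alpha,\beta})\prec\tau_{\alpha,\beta}$; thus the homology classes $\phi(\alpha),\phi(\beta)$ both lie, up to sign, in $\BR_{\ge0}^2$. As the matrix $M\in\SL_2\BZ$ representing $\phi$ in the basis $(\alpha,\beta)$ has columns $\phi(\alpha),\phi(\beta)$ and determinant $+1$, a short case analysis on the signs shows that either $M$ or $-M$ has non-negative entries. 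Finally, every non-negative matrix in $\SL_2\BZ$ is a product of the two elementary generators $\left(\begin{smallmatrix}1&1\\0&1\end{smallmatrix}\right)$ and $\left(\begin{smallmatrix}1&0\\1&1\end{smallmatrix}\right)$ — this is the Euclidean/continued-fraction algorithm, provable by induction on the sum of the entries by subtracting the smaller column from the larger — and under our identification these are $\delta_\alpha$ and $\delta_\beta^{-1}$. Hence $\phi\in\langle\delta_\alpha,\delta_\beta^{-1}\rangle_+$, up to the central hyperelliptic involution $-\Id$, which acts trivially on curves, laminations and train-tracks and is therefore harmless here.

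The one point requiring care is the reverse direction: one must be sure that $\phi\in\Gamma_{\tau_{\alpha,\beta}}$ genuinely forces $M$ (up to $-\Id$) into $\SL_2\BN$. This is handled cleanly by using only that the two cone-generating curves $\alpha$ and $\beta$ of $\tau_{\alpha,\beta}$ are carried by $\phi(\tau_{\alpha,\beta})$ and hence by $\tau_{\alpha,\beta}$ — which is automatic from transitivity of $\prec$ — so no converse statement of the form ``cone inclusion implies carrying'' is needed. What then remains is the classical factorization of non-negative $\SL_2\BZ$ matrices together with the bookkeeping of orientation and $\pm\Id$ conventions, which I expect to be routine.
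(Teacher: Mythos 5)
Your first inclusion is correct and is exactly what the paper records immediately before stating the Fact (the paper gives no further proof; in particular it never justifies the reverse inclusion, which is the only nontrivial one).

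The reverse inclusion, however, has a genuine gap, and it sits precisely at the point you flag as "handled cleanly". Knowing only that $\phi(\alpha)$ and $\phi(\beta)$ are carried by $\tau_{\alpha,\beta}$ tells you that each column of $M$ lies in $\pm\BR_{\ge0}^{2}$ \emph{separately}, and this together with $\det M=1$ does \textbf{not} imply that $M$ or $-M$ is non-negative. For example $M=\left(\begin{smallmatrix}0&-1\\1&0\end{smallmatrix}\right)$ has determinant $1$ and both columns in $\pm\BR_{\ge0}^{2}$, yet neither $M$ nor $-M$ is non-negative; the same holds for $M=\left(\begin{smallmatrix}0&-1\\1&-1\end{smallmatrix}\right)$, where even the third carried curve $\alpha\beta$ is sent to $\pm$ a non-negative class. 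So your "short case analysis on the signs" is false, and the extra information you explicitly disclaim needing -- control of the whole cone rather than of the two generating curves -- is exactly what is required. The repair is easy: $\phi(\tau_{\alpha,\beta})\prec\tau_{\alpha,\beta}$ means every measured lamination carried by $\phi(\tau_{\alpha,\beta})$ is carried by $\tau_{\alpha,\beta}$, i.e.\ the full two-dimensional cone $M\bigl(\BR_{\ge0}^{2}\bigr)$ is contained in $\BR_{\ge0}^{2}\cup\bigl(-\BR_{\ge0}^{2}\bigr)$; since $M\bigl(\BR_{\ge0}^{2}\bigr)\setminus\{0\}$ is connected and the target minus the origin has two components, the image cone lies entirely in one quadrant, whence $M$ or $-M$ is non-negative. (Your examples above are exactly the matrices this connectedness argument excludes.) With that step corrected, the factorization of non-negative determinant-one matrices into products of $\left(\begin{smallmatrix}1&1\\0&1\end{smallmatrix}\right)$ and $\left(\begin{smallmatrix}1&0\\1&1\end{smallmatrix}\right)$ -- which the paper itself invokes later in the form ``$\SL_2\BN$ is the free semigroup on these two generators'' -- and your remark about $-\Id$ finish the proof.
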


Still with the same notation, consider the identification $\pi_1(T_{1,0})=\BZ^2$ with respect to which the two simple curves $\alpha$ and $\beta$ correspond to the standard basis:
$$\alpha=\left(\begin{array}{c}1\\ 0\end{array}\right),\ \ \beta=\left(\begin{array}{c}0\\ 1\end{array}\right).$$
With respect to the induced identification \eqref{eq-adeleiscated2} between the mapping class group $\Map(T)$ and the group $\SL_2\BZ$ we have that
$$\delta_\alpha=\left(\begin{array}{cc}1&1\\ 0&1\end{array}\right),\ \ \delta_\beta=\left(\begin{array}{cc}1&0\\ -1&1\end{array}\right)$$
which means that the semigroup generated by $\delta_\alpha,\delta_\beta^{-1}$ corresponds to the positive semigroup $\SL_2\BN\subset\SL_2\BZ$:
$$\Gamma_{\tau_{\alpha,\beta}}=\langle\delta_\alpha,\delta_\beta^{-1}\rangle_+=\SL_2\BN=\left\{
\left(
\begin{array}{cc}
 a & b \\
c  & d   
\end{array}
\right)\in\SL_2\BZ\middle\vert a,b,c,d\ge 0\right\}$$

\subsection{Radallas and the map $\pi$ in the punctured torus}
Let $\tau=\tau_{\alpha,\beta}$ be a maximal train-track in the punctured torus $T_{1,1}$. The train-track $\tau$ has 3 edges $a,b,c$ labeled in such a way that 
$$\omega_\alpha=\left(
\begin{array}{c} 1 \\ 0 \\ 1
\end{array}
\right)\text{ and }\omega_\beta=\left(
\begin{array}{c} 0 \\ 1 \\ 1
\end{array}
\right)\text{ where }\omega_\gamma=\left(
\begin{array}{c} \omega_\gamma(a) \\ \omega_\gamma(b) \\ \omega_\gamma(c)
\end{array}
\right).$$
Note that the complement $T_{1,1}\setminus\tau$ of this train-track is a punctured bigon, but, after labelling the edges, has a structure reminiscent of a punctured hexagon where opposite sides are identified in $T_{1,1}$. This means that $\tau$ can be drawn as in figure \ref{fig1} or as the boundary of an hexagon in the fundamental domain of an hexagonal torus. See the left part of figure \ref{hand-fig5a}.

\begin{bem}
The only reason why we assumed that $\Sigma$ was not a once punctured torus in earlier sections was because we used the fact that the complementary regions of a maximal train-track could only be triangles and punctured monogons. This additional ``complication" in the case of the torus is in fact not such a problem since we are instead in a very concrete setting. 
\end{bem}

\begin{figure}[h]
\includegraphics[width=8cm, height=3cm]{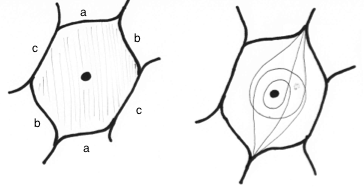}
\caption{The left half is the train-track $\tau$ as the boundary of a hexagon -- the picture is drawn in the universal homology cover of $T_{1,1}$, i.e. in $\BR^2\setminus\BZ^2$. The right image represents a possible radalla extending $\tau$.}
\label{hand-fig5a}
\end{figure}

Suppose that $\hat\tau$ is an arbitrary radalla extending $\tau$. If $\gamma\prec\hat\tau$ then the coefficients $\omega_\gamma(a)$, $\omega_\gamma(b)$ and $\omega_\gamma(c)$ satisfy
$$\omega_\gamma(c)\ge\omega_\gamma(a)+\omega_\gamma(b)$$
with equality if and only if $\gamma\prec\tau$. Let
$$\rho(\gamma)=\omega_\gamma(c)-(\omega_\gamma(a)+\omega_\gamma(b))$$
denote the defect and define a map
$$\pi_{\hat\tau}:\BR^{E(\hat\tau)}\to\BR^{E(\tau)}$$
such that
$$\pi_{\hat\tau}(\omega_\gamma)=\left(
\begin{array}{c} \omega_\gamma(a)+\rho(\gamma) \\ \omega_\gamma(b)+\rho(\gamma) \\ \omega_\gamma(c)+\rho(\gamma).
\end{array}
\right)$$
As in section \ref{sec-themap}, we can give a direct description of the map $\pi_{\hat\tau}$ in terms of the radalla, i.e. without giving formulas for the weights. To do so, we just consider the image $\phi(\hat\tau)$ of the radalla in question and remove all punctured monogons and all bigon and homotope what is left into $\tau$ (cf. with figure \ref{hand-fig5b}). 
\begin{figure}[h]
\includegraphics[width=8cm, height=3cm]{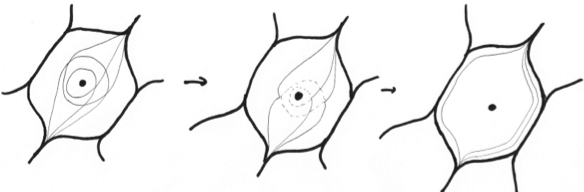}
\caption{The map $\pi_{\hat\tau}$ at the level of radallas.}
\label{hand-fig5b}
\end{figure}
Similarly, as long as we restrict to curves which are carried by some $\epsilon$-geodesic radalla, for $\epsilon$ small enough, we can describe the map without making use of any concrete carrying radalla. Instead we can define the map only using the geodesic representative of the curve in question, just as we did in  \ref{sec-themap}. Hence, as we did in the previous section, we get a well-defined map
$$\pi_\epsilon:\CS_{T_{1,1},k}^\epsilon\to\CM\CL_\BZ(T_{1,1})$$
which satisfies Lemma \ref{transverse-inter} and Proposition \ref{prop-key-map}. Moreover, the arguments in section \ref{sec-measures} remain valid -- in particular Proposition \ref{prop-reduction} and Proposition \ref{prop-semigroup-limit} still hold. We leave the details to the reader.

\subsection{Proof of Theorem \ref{sat2}}
Recall Theorem \ref{thm-density} stated in the introduction:

\begin{named}{Theorem \ref{thm-density}}
Every $\SL_2\BN$-invariant set $\CI\subset\BN^2$ has a density, meaning that there is $\alpha\in\BR$ with  
$$\lim_{L\to\infty}\frac 1{L^2}\vert \CI\cap L\cdot U\vert=\alpha\cdot\vol(U)$$
for any $U\subset\BR^2$ open and bounded by a rectifiable Jordan curve. Here $L\cdot U=\{v\in\BR^2\vert \frac 1Lv\in U\}$ is the set obtained by scaling $U$ by $L$ and $\vol(U)$ is the area of $U$ with respect to Lebesgue measure. 
\end{named}

Assuming Theorem \ref{thm-density} for now, we prove Theorem \ref{sat2}:

\begin{named}{Theorem \ref{sat2}}
Let $\Sigma$ be a complete hyperbolic surface of finite volume homeomorphic to a once punctured torus and let $\gamma_0\subset\Sigma$ be a multicurve. The limit \eqref{eq1} exists and moreover we have
$$\lim_{L\to\infty}\frac{\vert\{\gamma\in\CS_{\gamma_0}\vert\ell_\Sigma(\gamma)\le L\}\vert}{L^2}=C_{\gamma_0}\cdot\mu_{\Thu}(\{\lambda\in\CM\CL(\Sigma)\vert\ell_\Sigma(\lambda)\le 1\})$$
where $\mu_{\Thu}$ is the Thurston measure on the space of measured laminations $\CM\CL(\Sigma)$ and $C_{\gamma_0}>0$ depends only on $\gamma_0$.
\end{named}

\begin{proof}
We start by proving that the measures $\nu_{\gamma_0}^L$ defined in \eqref{eq-measure-muL} converge. By Proposition \ref{prop-semigroup-limit} it suffices to exhibit a maximal recurrent train-track $\tau$ and an open set $U\subset\{\lambda\in\CM\CL(\Sigma)\vert\lambda\prec\tau\}$ with $\mu_{\Thu}(U)>0$ and $\mu_{\Thu}(\D U)=0$ and such that the following holds:
\begin{quote}
(*) If $\CI\subset\{\gamma\in\CM\CL_\BZ(\Sigma)\vert\gamma\prec\tau\}$ is a non-empty $\Gamma_\tau$-invariant set of simple multicurves carried by $\tau$ then there is $\alpha>0$ with
$$\lim_{L\to\infty}\frac 1{L^2}\vert\CI\cap L\cdot U\vert=\alpha\cdot\mu_{\Thu}(U),$$
where $\Gamma_\tau=\{\phi\in\Map(\Sigma)\vert\phi(\tau)\prec\tau\}$.
\end{quote}
Let $\tau=\tau_{\alpha,\beta}$ be a standard maximal train-track in the once punctured torus.  As discussed earlier, identify the set of all simple multicurves carried by $\tau$ with $\BN^2$ and the semi-group $\Gamma_\tau$ with $\SL_2\BN$ in such a way that the action of $\phi\in\Gamma_\gamma$ on curves carried by $\tau$ corresponds to the action by matrix multiplication. In particular, we can identify the $\Gamma_\tau$-invariant set $\CI$ with an $\SL_2\BN$-invariant set $\CI\subset\BN^2$. Moreover, the identification $\{\gamma\in\CM\CL_\BZ\ \vert\ \gamma\prec\tau\}=\BN^2$ extends to an identification
$$\{\lambda\in\CM\CL(T_{1,1})\ \vert\ \lambda\prec\tau\}=\BR_+^2$$
in such a way that scaling by positive reals is preserved. It follows that for 
$$U\subset\{\lambda\in\CM\CL(T_{1,1})\ \vert\ \lambda\prec\tau\}=\BR_+^2$$
open one has 
$$\frac 1{L^2}\vert\CI\cap L\cdot U\vert=\frac 1{L^2}\vert\CI\cap L\cdot U\vert$$
where the left is computed in $\CM\CL(T_{1,1})$ and the right is computed in $\BR^2$. From Theorem \ref{thm-density} we get that, when working in $\BR^2$, the limit $\lim_{L\to\infty}\frac 1{L^2}\vert\CI\cap L\cdot U\vert$ exists and hence (*) holds. We have proved that the limit
$$\nu_{\gamma_0}=\lim_{L\to\infty}\nu_{\gamma_0}^L$$
exists. Moreover, it follows from Proposition \ref{prop-sublimit} that
$$\nu_{\gamma_0}=C_{\gamma_0}\cdot\mu_{\Thu}$$
for some $C_{\gamma_0}\in\BR_+$. Applying Proposition \ref{prop-reduction} to the Liouville current $\lambda_\Sigma$ we get that 
$$\lim_{L\to\infty}\frac{\vert\{\gamma\in\CS_{\gamma_0}\vert\iota(\lambda_\Sigma,\gamma)\le L\}\vert}{L^2}=C_{\gamma_0}\cdot\mu_{\Thu}(\{\lambda\in\CM\CL(\Sigma)\vert\iota(\lambda_{\Sigma},\lambda)\le 1\}).$$
The claim follows since $\iota(\lambda_\Sigma,\cdot)=\ell_\Sigma(\cdot)$.
\end{proof}

In fact, using either the arguments in the proof of Theorem \ref{sat2}, or the combination of Theorem \ref{sat2} and Corollary \ref{kor-ratio}, note that the claim of Theorem \ref{sat2} holds true in much more generality:

\begin{kor}
Let $\Sigma$ be a complete hyperbolic surface of finite volume and homeomorphic to a once punctured torus and let $\gamma_0\subset\Sigma$ be a multicurve. The limit \eqref{eq101} exists and moreover we have
$$\lim_{L\to\infty}\frac{\vert\{\gamma\in\CS_{\gamma_0}\vert\iota(\lambda_0,\gamma)\le L\}\vert}{L^2}=C_{\gamma_0}\cdot\mu_{\Thu}(\{\lambda\in\CM\CL(\Sigma)\vert\iota(\lambda_0,\lambda)\le 1\})$$
for every filling current $\lambda_0\in\CC(\Sigma)$.\qed
\end{kor}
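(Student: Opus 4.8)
The plan is to observe that the proof of Theorem \ref{sat2} has already carried out all the substantial work, so that this corollary is essentially a one-line consequence. Indeed, that proof does not merely produce the limit \eqref{eq1}: it first establishes that the full limit $\nu_{\gamma_0}=\lim_{L\to\infty}\nu_{\gamma_0}^L$ exists in the weak-*-topology on locally finite measures on $\CC(\Sigma)$ and equals $C_{\gamma_0}\cdot\mu_{\Thu}$ for some $C_{\gamma_0}\in\BR_+$ (using Proposition \ref{prop-semigroup-limit}, Theorem \ref{thm-density} to verify hypothesis (*) for a standard train-track $\tau_{\alpha,\beta}$ in the once-punctured torus, and Proposition \ref{prop-sublimit} to identify the limit as a multiple of the Thurston measure), and only then specializes Proposition \ref{prop-reduction} to the Liouville current. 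The point is that Proposition \ref{prop-reduction} is valid for an \emph{arbitrary} filling current, not just for $\lambda_\Sigma$, so I would simply feed it the given $\lambda_0$ in place of $\lambda_\Sigma$.

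Concretely: since $\Sigma$ is homeomorphic to a once-punctured torus we have $6g-6+2r=2$. For a filling current $\lambda_0\in\CC(\Sigma)$ the set $B_{\lambda_0}=\{\lambda\in\CM\CL(\Sigma)\vert\iota(\lambda_0,\lambda)\le 1\}$ is compact (third listed property of the intersection form), has nonempty interior so that $\mu_{\Thu}(B_{\lambda_0})>0$, is compact so that $\mu_{\Thu}(B_{\lambda_0})<\infty$ (local finiteness of $\mu_{\Thu}$), and has $\mu_{\Thu}$-null boundary $\{\iota(\lambda_0,\lambda)=1\}$ by the scaling property of $\mu_{\Thu}$. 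Since we already know $\nu_{\gamma_0}^{L}\to C_{\gamma_0}\mu_{\Thu}$ as $L\to\infty$, the hypothesis of Proposition \ref{prop-reduction} is met for every sequence $L_n\to\infty$, and the proposition yields
$$\lim_{L\to\infty}\frac{\vert\{\gamma\in\CS_{\gamma_0}\vert\iota(\lambda_0,\gamma)\le L\}\vert}{L^2}=C_{\gamma_0}\cdot\mu_{\Thu}(B_{\lambda_0}),$$
which is exactly the assertion, with the same constant $C_{\gamma_0}$ as in Theorem \ref{sat2}.

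An alternative route, which uses Theorem \ref{sat2} and Corollary \ref{kor-ratio} purely as black boxes, is to take $\lambda_1=\lambda_0$ and $\lambda_2=\lambda_\Sigma$ in Corollary \ref{kor-ratio}: the ratio of $\vert\{\gamma\in\CS_{\gamma_0}\vert\iota(\lambda_0,\gamma)\le L\}\vert$ to $\vert\{\gamma\in\CS_{\gamma_0}\vert\ell_\Sigma(\gamma)\le L\}\vert$ then converges to $\mu_{\Thu}(B_{\lambda_0})/\mu_{\Thu}(B_{\lambda_\Sigma})$, and multiplying by the convergent sequence $\vert\{\gamma\in\CS_{\gamma_0}\vert\ell_\Sigma(\gamma)\le L\}\vert/L^2\to C_{\gamma_0}\,\mu_{\Thu}(B_{\lambda_\Sigma})$ supplied by Theorem \ref{sat2} gives the result, the product of two convergent sequences converging to the product of the limits. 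In either route there is no genuine obstacle; the only care needed is the elementary bookkeeping that $\mu_{\Thu}(B_{\lambda_0})$ and $\mu_{\Thu}(B_{\lambda_\Sigma})$ are positive finite numbers and that the convergence of the measures $\nu_{\gamma_0}^L$ extracted from the proof of Theorem \ref{sat2} is a genuine full limit (so that Proposition \ref{prop-reduction} applies along every subsequence), rather than anything deeper.
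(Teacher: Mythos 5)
Your proposal is correct and coincides with the paper's own (essentially one-line) justification, which likewise invokes either the measure convergence $\nu_{\gamma_0}^L\to C_{\gamma_0}\mu_{\Thu}$ established in the proof of Theorem \ref{sat2} together with Proposition \ref{prop-reduction} applied to the arbitrary filling current $\lambda_0$, or alternatively Theorem \ref{sat2} combined with Corollary \ref{kor-ratio}. Your bookkeeping about $\mu_{\Thu}(B_{\lambda_0})$ being positive, finite, and with null boundary is exactly the care the paper leaves implicit.
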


It follows in particular that Theorem \ref{sat2} also holds for instance if we replace hyperbolic length by length with respect to a metric with pinched negative curvature.

It remains to prove Theorem \ref{thm-density} which we devote the next section to.

\subsection{Densities of $\SL_2\BN$-invariant sets in $\BN^2$}
The semigroup $\SL_2\BN$ is the free semigroup generated by the two matrices
\begin{equation}\label{eq-generators}
\left(
\begin{array}{cc}
1 & 1 \\
0  & 1   
\end{array}
\right),\left(
\begin{array}{cc}
 1 & 0 \\
1 & 1   
\end{array}
\right).
\end{equation}
In this section we consider the action of $\SL_2\BN$ on $\BN^2\subset(0,\infty)^2$. Note that this action is free. Note also that the inverses of the matrices in \eqref{eq-generators} are the matrices 
$$\left(
\begin{array}{cc}
1 & -1 \\
0  & 1   
\end{array}
\right),\left(
\begin{array}{cc}
 1 & 0 \\
-1 & 1   
\end{array}
\right).$$
It follows that $\left(\begin{array}{c}a \\ b  \end{array}\right)\in\BN^2$ belongs to the $\SL_2\BN$-orbit of $\left(\begin{array}{c}a_0 \\ b_0  \end{array}\right)$ if and only if while running the euclidean algorithm step by step beginning with $\left(\begin{array}{c}a \\ b  \end{array}\right)$ one passes by $\left(\begin{array}{c}a_0 \\ b_0  \end{array}\right)$. In particular, the $\SL_2\BN$-orbit of $\left(\begin{array}{c}1 \\ 1  \end{array}\right)$ is exactly the set of positive integer vectors whose entries are prime to each other:
\begin{equation}\label{eq-density-primes0}
\SL_2\BN\left(\begin{array}{c}1 \\ 1  \end{array}\right)=\left\{\left(\begin{array}{c}a \\ b  \end{array}\right)\in\BN^2\middle\vert \gcd(a,b)=1\right\}.
\end{equation}
It is well-known that this set has a density:
\begin{equation}\label{eq-density-primes}
\lim_{L\to\infty}\frac 1{L^2}\left\vert\SL_2\BN\left(\begin{array}{c}1 \\ 1  \end{array}\right)\cap L\cdot U\right\vert=\frac 6{\pi^2}\vol(U\cap(0,\infty)^2)
\end{equation}
for every set $U\subset\BR^2$ bounded by a rectifiable Jordan curve. Here $\vol$ stands for the standard volume in $\BR^2$ and $\frac {\pi^2}6=\zeta(2)$ is famously the value of the Riemann zeta function $\zeta(s)=\sum_{n=1}^\infty n^{-s}$ at $s=2$.

As a first step towards the proof of Theorem \ref{thm-density} we prove a generalization of \eqref{eq-density-primes}.

\begin{prop}\label{density-orbits}
The set $S_{p,q}=\SL_2\BN\left(\begin{array}{c}p \\ q  \end{array}\right)$ has a density for all $p,q\in\BN$. In fact, 
\begin{equation}\label{eq-density-orbits-1}
\lim_{L\to\infty}\frac 1{L^2}\vert S_{p,q}\cap L\cdot U\vert=\frac 6{\pi^2pq}\vol(U\cap(0,\infty)^2)
\end{equation}
for every set $U\subset\BR^2$ bounded by a rectifiable Jordan curve. Moreover, we have that
$$\left\vert\left\{\left(\begin{array}{c}x \\ y  \end{array}\right)\in S_{p,q}\middle\vert x+y\le L\right\}\right\vert\le\frac{L^2}{2pq}$$
for all $L$.
\end{prop}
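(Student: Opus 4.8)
The plan is to deduce everything from the known density \eqref{eq-density-primes} of coprime pairs by means of one exact bijection, isolating the single point where the equidistribution input of \cite{pakito} is genuinely needed.

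\textbf{The bijection and the bound.} Since $\SL_2\BN$ acts freely on $\BN^2$, the map $M\mapsto M\binom pq$ is a bijection from $\SL_2\BN$ onto $S_{p,q}$, and $M\mapsto M^T$ is a bijection of $\SL_2\BN$ onto itself (it is an involution exchanging the two free generators $\binom{1}{0}{1}{1}$... more precisely the generators in \eqref{eq-generators}, whose transposes are each other). For $w=M\binom pq$ one computes $w_1+w_2=(1,1)M\binom pq=(M^T\binom11)^{T}\binom pq=p\,(M^T\binom11)_1+q\,(M^T\binom11)_2$. Composing with the bijection $N\mapsto N\binom11$ of $\SL_2\BN$ onto $S_{1,1}=\{v\in\BN^2:\gcd(v_1,v_2)=1\}$ (this is \eqref{eq-density-primes0} together with freeness) yields a bijection between $\{w\in S_{p,q}:w_1+w_2\le L\}$ and $\{v\in\BN^2:\gcd(v_1,v_2)=1,\ pv_1+qv_2\le L\}$. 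The ``moreover'' bound is then immediate: to each $v\in\BN^2$ with $pv_1+qv_2\le L$ attach the unit square $(v_1-1,v_1]\times(v_2-1,v_2]$; these are pairwise disjoint and contained in $\{x,y\ge0,\ px+qy\le L\}$, a triangle of area $\tfrac{L^2}{2pq}$, so $\vert\{w\in S_{p,q}:w_1+w_2\le L\}\vert\le\vert\{v\in\BN^2:pv_1+qv_2\le L\}\vert\le\tfrac{L^2}{2pq}$.

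\textbf{Triangular windows and the constant.} Apply \eqref{eq-density-primes} to $U_0=\{(x,y):x,y>0,\ px+qy<1\}$, which is bounded by a rectifiable Jordan curve, so that $L\cdot U_0=\{px+qy<L,\ x,y>0\}$. The bijection of the previous paragraph gives $\vert\{w\in S_{p,q}:w_1+w_2\le L\}\vert=\vert S_{1,1}\cap L\cdot U_0\vert\sim\tfrac6{\pi^2}\vol(U_0)\,L^2=\tfrac{3}{\pi^2 pq}L^2$; equivalently, the rescaled counting measures $m_L^{p,q}:=\tfrac1{L^2}\sum_{w\in S_{p,q}}\delta_{w/L}$ assign to the triangle $\{x+y\le1\}$ mass tending to $\tfrac{3}{\pi^2 pq}$. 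This already pins the value of the constant in \eqref{eq-density-orbits-1} once we know the limit of $m_L^{p,q}$ is a multiple of Lebesgue: and this is forced by the self-similar identity $S_{p,q}=\{\binom pq\}\sqcup A\cdot S_{p,q}\sqcup B\cdot S_{p,q}$, obtained by grouping $M=g_1\cdots g_n\in\SL_2\BN$ by its first letter $g_1\in\{A,B\}$ (the three pieces are disjoint since elements of $A\cdot S_{p,q}$, resp. $B\cdot S_{p,q}$, have strictly larger, resp. smaller, first than second coordinate, and $\binom pq$ lies in neither image). Indeed this identity shows any vague limit $\mu$ of $m_L^{p,q}$ on $\overline{(0,\infty)^2}$ solves the transfer-operator relation $\mu=A_*\mu+B_*\mu$; since $A_*(\mathrm{Leb}|_{(0,\infty)^2})+B_*(\mathrm{Leb}|_{(0,\infty)^2})=\mathrm{Leb}|_{\{x>y>0\}}+\mathrm{Leb}|_{\{0<x<y\}}=\mathrm{Leb}|_{(0,\infty)^2}$, and since a homogeneous Radon solution is determined on each piece $g(\overline{(0,\infty)^2})$ of the Stern--Brocot subdivision as a pushforward of $\mu$ itself, $\mu$ must be proportional to $\mathrm{Leb}|_{(0,\infty)^2}$.

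\textbf{General $U$ and the obstacle.} What the previous paragraph does not supply is the \emph{existence} of $\lim_L m_L^{p,q}$ as a measure on $(0,\infty)^2$: the self-similarity constrains the shape of a putative limit but does not produce one, and the bijection trick only controls the triangular windows $\{x+y\le t\}$. This is exactly the step where I would invoke Maucourant's equidistribution results for orbits of linear semigroups \cite{pakito}: they give that $m_L^{p,q}$ converges vaguely on $(0,\infty)^2$ to a Radon measure $\mu$, which by the two previous paragraphs equals $\tfrac6{\pi^2 pq}\,\mathrm{Leb}|_{(0,\infty)^2}$. Finally, for $U$ bounded by a rectifiable Jordan curve, $\partial U$ has zero Lebesgue measure, hence $\mu(\partial U)=0$, and therefore $\tfrac1{L^2}\vert S_{p,q}\cap L\cdot U\vert=m_L^{p,q}(U)\to\mu(U)=\tfrac6{\pi^2 pq}\vol(U\cap(0,\infty)^2)$, which is \eqref{eq-density-orbits-1}. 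The main difficulty is thus concentrated in this equidistribution statement; the bijection, the packing bound, the triangular asymptotics and the evaluation of the constant are all elementary and self-contained.
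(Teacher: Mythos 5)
Your proof is correct and follows essentially the same route as the paper's: Maucourant's theorem for the existence of the limiting measure, the transpose bijection with the set of coprime pairs both to evaluate the constant $\frac{6}{\pi^2pq}$ and to get the packing bound, and homogeneity plus semigroup invariance (your transfer relation $\mu=A_*\mu+B_*\mu$ implies the invariance $\mu(gE)=\mu(E)$ that the paper uses) to identify the limit as a multiple of Lebesgue measure. The only soft spot is the claim that the transfer relation and homogeneity alone force proportionality to Lebesgue via the Stern--Brocot subdivision: without first recording that any vague limit satisfies $\mu\ll\mathrm{Leb}$ (immediate since $S_{p,q}\subset\BN^2$ bounds $m_L^{p,q}(U)$ by the lattice-point count of $L\cdot U$) and then appealing to ergodicity of the induced action on slopes, singular homogeneous solutions are not excluded --- but the paper is comparably terse at exactly this step, so this is a shared rather than a new gap.
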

\begin{proof}
We deduce the existence of the density from a beautiful theorem of Maucourant \cite{pakito}. Consider $\SL_2\BN$ as a subset of the vector space $M_{2,2}(\BR)$ of $2$-by-$2$ real matrices and recall that $\SL_2\BN$ is exactly the intersection of $\SL_2\BZ$ with the set of matrices in $M_{2,2}(\BR)$ all of whose entries are non-negative. For all $L>0$, consider the measure
$$\nu_L=\frac 1{L^2}\sum_{A\in\SL_2\BN}\delta_{\frac 1L A}$$
on $M_{2,2}(\BR)$ where $\delta_x$ is the Dirac probability measure centered at $x$. From \cite{pakito} we obtain that when $L$ tends to $\infty$ the sequence of measures $\nu_L$ converges to some measure $\nu$ on $M_{2,2}(\BR)$. Although we will not need this fact, we remark that the measure $\nu$ is given explicitly in \cite[p. 361]{pakito}. 

Anyways, consider the map 
$$P:M_{2,2}(\BR)\to\BR^2,\ \ A\mapsto A\left(\begin{array}{c}p \\ q  \end{array}\right)$$
and notice that for all $U\subset\BR^2$ we have that
\begin{equation}\label{thesecatsareapain1}
\frac 1{L^2}\vert S_{p,q}\cap L\cdot U\vert=(P_*\nu_L)(U)
\end{equation}
where $P_*$ is the push-forward of the measure $\nu_L$ under $P$. It follows that
\begin{equation}\label{thesecatsareapain}
\lim_{L\to\infty}\frac 1{L^2}\vert S_{p,q}\cap L\cdot U\vert=(P_*\nu)(U)
\end{equation}
for any open set $U$ with $(P_*\nu)(\bar U\setminus U)=0$. Moreover, from \eqref{thesecatsareapain1} and \eqref{thesecatsareapain} we get that $P_*\nu(U)$ is bounded from above by $\vol(U)$. It follows that $P_*\nu$ is absolutely continuous with respect to the Lebesgue measure and hence that \eqref{thesecatsareapain} holds for all sets bounded by rectifiable Jordan curves. 

Now, the measure $P_*\nu$ is by construction invariant under the action of $\SL_2\BN$ on $(0,\infty)^2$. More concretely, this means that for any $A\in\SL_2\BN$ and $U\subset(0,\infty)^2$ we have that $P_*\nu(AU)=P_*\nu(U)$. Moreover, the measure $P_*\nu$ has the same scaling behavior as Lebesgue measure, meaning that for $U$ as above and $L\in\BR^+$ one has $P_*\nu(L\cdot U)=L^2\cdot P_*\nu(U)$. Up to scaling, Lebesgue measure is the only measure in the Lebesgue class with this behavior. Hence there is a constant $c$ with $P_*\nu(\cdot)=c\cdot\vol(\cdot)$ and thus
$$\lim_{L\to\infty}\frac 1{L^2}\vert S_{p,q}\cap L\cdot U\vert=c\cdot\vol(U\cap(0,\infty)^2)$$
for every $U$ bounded by a rectifiable Jordan curve. To conclude the proof of \eqref{eq-density-orbits-1} we need to show that $c=\frac 6{\pi^2pq}$. To see this, let $a,b>0$ and consider the triangle 
$$\Delta_{a,b}=\left\{\left(\begin{array}{c}x \\ y  \end{array}\right)\in(0,\infty)^2\middle\vert (a,b)\cdot\left(\begin{array}{c}x \\ y  \end{array}\right)\le 1\right\}.$$
Note that 
$$A\left(\begin{array}{c}p \\ q  \end{array}\right)\in L\cdot\Delta_{1,1}\Leftrightarrow A^t\left(\begin{array}{c}1 \\ 1  \end{array}\right)\in L\cdot\Delta_{p,q}$$
where $A\in\SL_2\BN$ and $A^t$ is the transpose of $A$. Since $\SL_2\BN$ is invariant under taking transposes we deduce that 
\begin{multline}\label{eq-trick}
\left\vert\left\{A\left(\begin{array}{c}p \\ q  \end{array}\right)\in L\cdot\Delta_{1,1}\middle\vert A\in\SL_2\BN\right\}\right\vert=\\
\left\vert\left\{A\left(\begin{array}{c}1 \\ 1  \end{array}\right)\in L\cdot\Delta_{p,q}\middle\vert A\in\SL_2\BN\right\}\right\vert
\end{multline}
for all $L>0$. From \eqref{eq-density-primes0} and \eqref{eq-density-primes} we obtain that 
$$\lim_{L\to\infty}\frac 1{L^2}\left\vert\left\{A\left(\begin{array}{c}1 \\ 1  \end{array}\right)\in L\cdot\Delta_{p,q}\middle\vert A\in\SL_2\BN\right\}\right\vert=\frac 6{\pi^2}\vol(\Delta_{p,q})=\frac 12\frac 6{\pi^2pq}$$
and when combining this with \eqref{eq-trick} it follows that
$$\lim_{L\to\infty}\frac 1{L^2}\left\vert\left\{A\left(\begin{array}{c}p \\ q  \end{array}\right)\in L\cdot\Delta_{1,1}\middle\vert A\in\SL_2\BN\right\}\right\vert=\frac 6{\pi^2pq}\vol(\Delta_{1,1})$$
which proves that $c=\frac 6{\pi^2pq}$, as we wanted. We have established \eqref{eq-density-orbits-1}.

The final claim in Proposition \ref{density-orbits} follows from \eqref{eq-trick} together with the observation that $(L\cdot \Delta_{p,q})\cap\BN^2$ has at most cardinality $\vol(L\cdot \Delta_{p,q})=\frac {L^2}{2pq}$.
\end{proof}

Armed with Proposition \ref{density-orbits} we are ready to prove Theorem \ref{thm-density}.

\begin{proof}[Proof of Theorem \ref{thm-density}]
Recall that the semigroup $\SL_2\BN$ acts freely on $\BN^2$. Moreover, because $\SL_2\BN$ is a free semigroup, two orbits $S_{p,q}=\SL_2\BN\left(\begin{array}{c}p \\ q  \end{array}\right)$ and $S_{r,s}=\SL_2\BN\left(\begin{array}{c}r \\ s  \end{array}\right)$ intersect if and only if one orbit is contained in the other. It follows that any invariant set $\CI$ can be written in a unique way as the disjoint union of a set $\CO$ of orbits
$$\CI=\bigsqcup_{(p,q)^t\in\CO}S_{p,q}$$
where the superscript ${ }^t$ denotes, as always, the transpose. Note that the infinite sum
$$\lambda=\sum_{(p,q)^t\in\CO}\frac 6{\pi^2pq}$$
exists because it is bounded from above by $1$ and all its summands are positive. We claim that for any $U\subset(0,\infty)^2$ with rectifiable boundary we have
$$\lim_{L\to\infty}\frac 1{L^2}\vert\CI\cap L\cdot U\vert=\lambda\vol(U).$$ 
Since the orbits $S_{p,q}$ are pairwise disjoint, the claim follows directly from Proposition \ref{density-orbits} if the set $\CO$ is finite. In particular, one has that
\begin{equation}\label{eq-liminf}
\liminf_{L\to\infty}\frac 1{L^2}\vert\CI\cap L\cdot U\vert\ge\left(
\sum_{(p,q)\in\CO'}\frac 6{\pi^2pq}
\right)\cdot\vol(U)
\end{equation}
for any finite subset $\CO'\subset\CO$. Note that for any $\epsilon$ we can find a finite set $\CO'$ such that if $\CP=\CO\setminus\CO'$ is its complement we have 
\begin{equation}\label{guivarchjustsneezed}
\sum_{(p,q)^t\in\CP}\frac 6{\pi^2pq}\le\epsilon.
\end{equation}
The claim will follow then from \eqref{eq-liminf} if we show that for any $U\subset(0,1)^2$ as above and set $\CP\subset\CO$ satisfying \eqref{guivarchjustsneezed} we have
$$\limsup_{L\to\infty}\frac 1{L^2}\left\vert\left(\bigsqcup_{(p,q)^t\in\CP}S_{p,q}\right)\cap L\cdot U\right\vert\le c_U\cdot\epsilon$$
for some constant $c_U$ depending on $U$. In fact, letting $\ell_U$ be such that $U\subset\ell_U\cdot\Delta_{1,1}$ we will prove that $c_U=\ell_U^2$ does the trick -- here $\Delta_{1,1}$ is as in the proof of Proposition \ref{density-orbits}. For any such $U$ we get that
\begin{align*}
\left\vert\left(\bigsqcup_{(p,q)^t\in\CP}S_{p,q}\right)\cap L\cdot U\right\vert
  &\le\left\vert\left(\bigsqcup_{(p,q)^t\in\CP}S_{p,q}\right)\cap L\cdot\ell_U\cdot\Delta_{1,1}\right\vert\\
  &=\left\vert\left(\bigsqcup_{(p,q)^t\in\CP}S_{p,q}\right)\cap\left\{\left(\begin{array}{c}x \\ y  \end{array}\right)\middle\vert x+y\le L\ell_U\right\}\right\vert\\
  &=\sum_{(p,q)^t\in\CP}\left\vert S_{p,q}\cap\left\{\left(\begin{array}{c}x \\ y  \end{array}\right)\middle\vert x+y\le L\ell_U\right\}\right\vert\\
  &\le\sum_{(p,q)^t\in\CP}\frac{L^2\ell_U^2}{2pq}< L^2c_U\sum_{(p,q)^t\in\CP}\frac 6{\pi^2pq}\le L^2c_U\epsilon
 \end{align*}
Having established the bound we needed, we have proved Theorem \ref{thm-density}.
\end{proof}

The reader might wonder if one really needed to say anything justifying that the density of a disjoint union of set is the sum of the densities of the individual sets. Well, this statement  is in fact not true in general: in general every countable set is the countable union of singletons, which obviously have density $0$. In other words, the last claim of Proposition \ref{density-orbits} actually plays a central role in the proof of Theorem \ref{thm-density}.



\begin{thebibliography}{10}

\bibitem{Javi-Cris}
J. Aramayona and C. Leininger, {\em Hyperbolic structures on surfaces and geodesic currents}, to appear in {\em Algorithms and geometric topics around automorphisms of free groups}, Advanced Courses CRM-Barcelona, Birkh\"auser.

\bibitem{Bonahon86}
F. Bonahon, {\em Bouts des vari\'et\'es hyperboliques de dimension 3}, Ann. of Math. 124 (1986).

\bibitem{Bonahon88}
F. Bonahon, {\em The geometry of Teichm\"uller space via geodesic currents}, Invent. Math. 92 (1988).

\bibitem{Bonahon-currents-groups}
F. Bonahon, {\em Geodesic currents on negatively curved groups}, in {\em Arboreal group theory} MSRI Publ., 19, Springer, 1991. 

\bibitem{Casson-Bleiler}
A. Casson and S. Bleiler, {\em Automorphisms of surfaces after Nielsen and Thurston}, Cambridge University Press, 1988.

\bibitem{Chris-moon-kasra}
M. Duchin, C. Leininger and K. Rafi, {\em Length spectra and degeneration of flat metrics}, Invent. Math. 182, (2010).

\bibitem{FLP}
A. Fathi, F. Laudenbach and V. P\'oenaru, {\em Travaux de Thurston sur les surfaces}, Ast\'erisque, vol. 66-67, 1979.

\bibitem{chefin}
U. Hamenst\"adt, {\em Parametrizations of Teichm\"uller space and its Thurston boundary}, in {\em Geometric analysis and nonlinear partial differential equations}, J. Geom. Anal. 14 (2004).

\bibitem{Hatcher}
A. Hatcher, {\em Measured lamination spaces for surfaces, from the topological viewpoint}, Topology and its Applications 30, (1988).

\bibitem{Masur}
H. Masur, {\em Ergodic actions of the mapping class group}, Proc. AMS 94, (1985).

\bibitem{pakito}
F.Maucourant, {\em Homogeneous asymptotic limits of Haar measures of semisimple linear groups and their lattices}, Duke Math. J. 136, (2007).

\bibitem{Greg-Rivin}
G. McShane and I. Rivin, {\em A Norm on Homology of Surfaces and Counting Simple Geodesics}, IMRN 2, (1995).

\bibitem{Maryam}
M. Mirzakhani, {\em Growth of the number of simple closed geodesics on hyperbolic surfaces}, Annals of Math. 168, (2008).

\bibitem{Maryam-new}
M. Mirzakhani, {\em Counting mapping class group orbits on hyperbolic surfaces}.

\bibitem{Otal}
J.-P. Otal, {\em Le spectre marqu\'e des longueurs des surfaces \`a courbure n\'egative}, Ann. of Math. 131, (1990).

\bibitem{Papadopoulos}
A. Papadopoulos, {\em Reseaux ferroviaires, diff\'eomorphismes pseudo-Anosov eu automorphismes symplectiques de l'homologie d'une surface}, Publications Mathematiques d'Orsay, 1983.

\bibitem{Penner-Harer}
R. Penner and J. Harer, {\em Combinatorics of Train Tracks}, Princeton University Press, 1992.

\bibitem{Rees}
M. Rees, {\em An alternative approach to the ergodic theory of measured foliations on surfaces}, Ergodic Theory Dynamical Systems 1, (1982).

\bibitem{Rivin}
I. Rivin, {\em Geodesics with one self-intersection, and other stories}, Adv. Math. 231, (2012).

\bibitem{Sapir1}
J. Sapir, {\em Lower bound for the number of non-simple geodesics on surfaces}, arXiv:1505.06805.

\bibitem{Sapir2}
J. Sapir, {\em Bounds on the number of non-simple closed geodesics on a surface}, arXiv:1505.07171

\bibitem{Thurston-notes}
W. Thurston, {\em The Geometry and Topology of Three-manifolds}, Unpublished notes, 1980.


\end{thebibliography}
\end{document}